\newtheorem{theorem}{Theorem}[section]
\newtheorem{lemma}[theorem]{Lemma}
\newtheorem{proposition}[theorem]{Proposition}
\newtheorem{corollary}[theorem]{Corollary}
\theoremstyle{definition}
\newtheorem{definition}[theorem]{Definition}
\newtheorem{example}[theorem]{Example}
\theoremstyle{remark}
\newtheorem{remark}[theorem]{Remark}
\numberwithin{equation}{section}
\newcommand{\dmal}[1]{\begin{align*}{#1}\end{align*}}              
\newcommand{\newf}[3]{{#1}:{#2}\longrightarrow {#3}}					 
\newcommand{\newfd}[5]{\begin{array}{cccc}{#1}: &{#2} &\longrightarrow &{#3} \\& {#4} &\longmapsto &{#5}\end{array}}
\newcommand{\la}[1]{\text{$\mathcal{#1}$}}
\newcommand{\lb}[1]{\text{$\mathscr{#1}$}}
\newcommand{\lc}[1]{\text{$\mathbb{#1}$}}
\newcommand{\powerset}[1]{\text{$\lb{P}(#1)$}}								
\newcommand{\eword}{\text{$\omega$}}											
\newcommand{\xia}{\text{$\xi^\alpha$}}											
\newcommand{\uset}[1]{\text{$\uparrow\hspace{-0.1cm}{#1}$}}				
\newcommand{\usetr}[2]{\text{$\uparrow_{\scriptscriptstyle{#2}}\hspace{-0.1cm}{#1}$}} 
\newcommand{\lset}[1]{\text{$\downarrow\hspace{-0.1cm}{#1}$}}				
\newcommand{\lsetr}[2]{\text{$\downarrow_{\scriptscriptstyle{#2}}\hspace{-0.1cm}{#1}$}} 
\newcommand{\dgraphg}[1]{\text{$\lb{#1}$}}									
\newcommand{\dgraphupleg}[3]{\text{$\dgraphg{#1}=(\dgraphg{#1}^0,\dgraphg{#1}^1,#2,#3)$}}	
\newcommand{\dgraph}{\text{$\dgraphg{E}$}}									
\newcommand{\dgraphuple}{\text{$\dgraphupleg{E}{r}{s}$}}					
\newcommand{\alfg}[1]{\text{$\lb{#1}$}}										
\newcommand{\acfg}[1]{\text{$\lb{#1}$}}										
\newcommand{\lbfg}[1]{\text{$\lb{#1}$}}										
\newcommand{\acfrg}[1]{\text{$\acfg{B}_{#1}$}}								
\newcommand{\alf}{\text{$\alfg{A}$}}											
\newcommand{\acf}{\text{$\acfg{B}$}}											
\newcommand{\lbf}{\text{$\lbfg{L}$}}											
\newcommand{\acfra}{\text{$\acfg{B}_{\alpha}$}}								
\newcommand{\lgraphg}[2]{\text{$(\dgraphg{#1},\lbfg{#2})$}}				
\newcommand{\lspaceg}[3]{\text{$(\dgraphg{#1},\lbfg{#2},\acfg{#3})$}}
\newcommand{\lgraph}{\text{$\lgraphg{E}{L}$}}								
\newcommand{\lspace}{\text{$\lspaceg{E}{L}{B}$}}							
\newcommand{\awsetg}[2]{\text{$\lbfg{#1}^{#2}$}}	
\newcommand{\awplus}{\text{$\awsetg{L}{\scriptscriptstyle{\geq 1}}$}}								
\newcommand{\awn}[1]{\text{$\awsetg{L}{#1}$}}								
\newcommand{\awstar}{\text{$\awsetg{L}{\ast}$}}							
\newcommand{\awinf}{\text{$\awsetg{L}{\infty}$}}							
\newcommand{\awleinf}{\text{$\awsetg{L}{\scriptscriptstyle{\leq\infty}}$}}					
\newcommand{\fset}[1]{\text{$\lc{#1}$}}										
\newcommand{\filt}{\text{$\fset{F}$}}											
\newcommand{\filtw}[1]{\text{$\fset{F}_{#1}$}}
\newcommand{\ftight}{\text{$\fset{F}_{tight}$}}										
\newcommand{\ftg}[1]{\text{$\la{#1}$}}											
\newcommand{\ft}{\text{$\ftg{F}$}}												
\begin{document}

\title[Inverse semigroups and labelled spaces]{Inverse semigroups associated with labelled spaces and their tight spectra}

\author[G. Boava \and G. de Castro \and F. Mortari]{Giuliano Boava \and Gilles G. de Castro \and Fernando de L. Mortari}
\address{Departamento de Matemática, Universidade Federal de Santa Catarina, 88040-970 Florianópolis SC, Brazil.}
\email{g.boava@ufsc.br \\ gilles.castro@ufsc.br \\ \newline fernando.mortari@ufsc.br}
\keywords{Inverse semigroup, tight spectrum, labelled space}
\subjclass[2010]{Primary: 20M18, Secondary: 05C20, 05C78}

\begin{abstract}
The notion of a labelled space was introduced by Bates and Pask in generalizing certain classes of C*-algebras. Motivated by Exel's work on inverse semigroups and combinatorial C*-algebras, we associate each weakly left resolving labelled space with an inverse semigroup, and characterize the tight spectrum of the latter in a way that is reminiscent of the description of the boundary path space of a directed graph.
\end{abstract}

\maketitle

\section{Introduction}
When studying C*-algebras, it is often useful do describe them as C*-algebras associated with groupoids \cite{MR584266}. For certain classes of C*-algebras, such descriptions have been achieved with the assistance of inverse semigroups, most notably with the work of Paterson on \emph{graph C*-algebras} \cite{MR1962477} and of others inspired by his approach \cite{MR2419901,MR2184052,MR2457327}. The desired groupoid appears there as the groupoid of germs for an action of an inverse semigroup (defined from the original object considered, e.g. a directed graph in \cite{MR1962477}) on a certain topological space. A natural candidate for a topological space for the inverse semigroup to act on is the set of characters defined on the idempotents of the semigroup;  however, the C*-algebra constructed from the groupoid of germs that comes from this action is not usually isomorphic to the original C*-algebra.

What was done in the examples studied to somehow fix this is to restrict the space of characters using information from the original object; one of the main goals of Exel in \cite{MR2419901} is to investigate whether this restriction can be done using only information contained in the inverse semigroup, without returning to the original object. He showed that it is possible to do so, by considering only those characters that are \emph{tight}, that is, using the \emph{tight spectrum} of the idempotent semilattice.

Many of the C*-algebras studied in the above-mentioned work can be realized as C*-algebras associated with \emph{labelled graphs} (or more precisely, \emph{labelled spaces}), as defined by Bates and Pask \cite{MR2304922} and further studied with collaborators \cite{2012arXiv1203.3072B,MR2542653,2011arXiv1106.1484B}. Examples include C*-algebras associated with ultragraphs and the Matsumoto algebras of (two-sided) subshifts \cite{MR2304922}, as well as the Carlsen-Matsumoto algebra of a one-sided shift \cite{2012arXiv1203.3072B}.

The main goal of this paper is taking the first step in bringing inverse semigroup theory to the study of labelled spaces in general. Motivated by the inverse semigroup treatment given to graph, ultragraph and higher rank graph C*-algebras in \cite{MR2419901,MR2184052,MR2457327,MR1962477}, we construct an inverse semigroup from a labelled space that is \emph{weakly left resolving}, and proceed to give a description of the tight spectrum for this inverse semigroup.  This is achieved through a study and complete characterization of filters in the idempotent semilattice of this inverse semigroup. The resulting description resembles the characterization of the \emph{boundary path space} of a directed graph as the set consisting of all infinite paths on the graph, and of all finite paths that end in a singular vertex \cite{MR3119197}; indeed we show that, for certain classes of labelled spaces, we arrive exactly at the boundary path space of the graph, but that this may not always be the case. Although motivated by the theory of C*-algebras, we work strictly in the realm of inverse semigroup theory.

This paper is organized as follows: Section 2 is a review of some of the necessary terminology and results on labelled spaces and filters and characters in semilattices. Section 3 deals with the construction of an inverse semigroup associated with a labelled space. Sections 4, 5 and 6 give  a description of the filters, ultrafilters and tight filters, respectively, on the semilattice of idempotents of the inverse semigroup defined in Section 3, and examples are studied.

\section{Preliminaries}

\subsection{Labelled graphs}
\label{subsection:labelled.graphs}

\begin{definition}
	A \emph{(directed) graph} $\dgraphuple$ consists of nonempty countable sets $\dgraph^0$, $\dgraph^1$ and functions $r,s:\dgraph^1\to \dgraph^0$; an element of $\dgraph^0$ is called a \emph{vertex} of the graph, and an element of $\dgraph^1$ is called an \emph{edge}. For an edge $e$, we say that $r(e)$ is the \emph{range} of $e$ and $s(e)$ is the \emph{source} of $e$.
\end{definition}

\begin{definition}A vertex $v$ in a graph $\dgraph$ is called a \emph{source} if $r^{-1}(v)=\emptyset$, a \emph{sink} if $s^{-1}(v)=\emptyset$, and an \emph{infinite emitter} if $s^{-1}(v)$ is an infinite set. The vertex $v$ is \emph{singular} if it is either a sink or an infinite emitter, and \emph{regular} otherwise.
\end{definition}

\begin{definition}
	A \emph{path of length $n$} in a graph $\dgraph$ is a sequence $\lambda=\lambda_1\lambda_2\ldots\lambda_n$ of edges such that $r(\lambda_i)=s(\lambda_{i+1})$ for all $i=1,\ldots,n-1$. We write $|\lambda|=n$ for the length of $\lambda$ and regard vertices as paths of length $0$. We denote by $\dgraph^n$ the set of all paths of length $n$ and $\dgraph^{\ast}=\cup_{n\geq 0}\dgraph^n$. We extend the range and source maps to $\dgraph^{\ast}$ by defining $s(\lambda)=s(\lambda_1)$ and $r(\lambda)=r(\lambda_n)$ if $n\geq 2$ and $s(v)=v=r(v)$ for $n=0$. Similarly, we define a \emph{path of infinite length} (or an \emph{infinite path}) as an infinite sequence $\lambda=\lambda_1\lambda_2\ldots$ of edges such that $r(\lambda_i)=s(\lambda_{i+1})$ for all $i\geq 1$. The set of all infinite paths will be denoted by $\dgraph^{\infty}$.
\end{definition}

\begin{definition}
	Given a nonempty set $\alf$, called an \emph{alphabet}, and whose elements are referred to as \emph{letters}, a \emph{labelled graph} $\lgraph$ over the alphabet $\alf$ consists of a graph $\dgraphuple$ and a map $\lbf:\dgraph^1\to\alf$, called the \emph{labelling map}. 
\end{definition}

Without loss of generality, we may assume that $\lbf$ is onto.

Let $\alf^{\ast}$ be the set of all finite words over $\alf$, including the empty word $\eword$, and extend the map $\lbf$ to a map $\lbf:\dgraph^n\to\alf^{\ast}$ defined by $\lbf(\lambda)=\lbf(\lambda_1)\ldots\lbf(\lambda_n)$. Similarly, one can define the map $\lbf:\dgraph^{\infty}\to\alf^{\infty}$, where $\alf^{\infty}$ is the set of infinite countable words over \alf.

Given $A\subseteq\powerset{\dgraph^0}$ (where $\powerset{\dgraph^0}$ is the power set of $\dgraph^0$) define
\begin{equation}\label{eqn:lae1}
	\lbf(A\dgraph^1)=\{\lbf(e)\ |\ e\in\dgraph^1\ \mbox{and}\ s(e)\in A\}.
\end{equation}

\begin{definition}
	The elements of $\awn{n}:=\lbf(\dgraph^n)$ are called \emph{labelled paths of length $n$} and the elements of $\awinf:=\lbf(\dgraph^{\infty})$ are called \emph{labelled paths of infinite length}. If $\alpha$ is a labelled path, a path on the graph $\lambda$ such that $\lbf(\lambda)=\alpha$ is called a representative of $\alpha$. Since any two representatives of $\alpha$ have the same length, one can define the \emph{length of} $\alpha$, denoted by $|\alpha|$, as the length of any one of its representatives. We also consider $\eword$ as a labelled path, with $|\eword|=0$.  If $1\leq i\leq j\leq |\alpha|$, let $\alpha_{i,j}=\alpha_i\alpha_{i+1}\ldots\alpha_{j}$ if $j<\infty$ and $\alpha_{i,j}=\alpha_i\alpha_{i+1}\ldots$ if $j=\infty$. If $j<i$ set $\alpha_{i,j}=\eword$. The set $\awplus=\cup_{n\geq 1}\awn{n}$ is the set of all labelled paths of positive finite length. We also define $\awstar=\{\eword\}\cup\awplus$, and $\awleinf=\awstar\cup\awinf$.
\end{definition}

Since labelled paths are words over an alphabet, we can concatenate two labelled paths $\alpha$ and $\beta$ to obtain a new word $\alpha\beta$. However, it is not always true that $\alpha\beta$ is also a labelled path, for there may not be a path on the graph whose label is $\alpha\beta$.

\begin{definition}
	Given two labelled paths $\alpha,\beta$, we say that $\alpha$ is a \emph{beginning of} $\beta$ if $\beta=\alpha\beta'$ for some labelled path $\beta'$. We say that $\alpha$ and $\beta$ are \emph{comparable} if $\alpha$ is a a beginning of $\beta$ or $\beta$ is a beginning of $\alpha$.
\end{definition}

\begin{definition}
	For $\alpha\in\awstar$ and $A\in\powerset{\dgraph^0}$, the \emph{relative range of $\alpha$ with respect to} $A$, denoted by $r(A,\alpha)$, is the set
	\[r(A,\alpha)=\{r(\lambda)\ |\ \lambda\in\dgraph^{\ast},\ \lbf(\lambda)=\alpha,\ s(\lambda)\in A\}\]
	if $\alpha\in\awplus$ and $r(A,\eword)=A$ if $\alpha=\eword$. The \emph{range of $\alpha$}, denoted by $r(\alpha)$, is the set \[r(\alpha)=r(\dgraph^0,\alpha).\]
	For $\alpha\in\awplus$ we also define the \emph{source of $\alpha$} as the set \[s(\alpha)=\{s(\lambda)\in\dgraph^0\ |\ \lbf(\lambda)=\alpha\}.\]
\end{definition}

Observe that, in particular, $r(\eword)=\dgraph^0$, and if $\alpha\in\awplus$ then  $r(\alpha)=\{r(\lambda)\in\dgraph^0\ |\ \lbf(\lambda)=\alpha\}$. The definitions above give maps $s,r:\awplus\to\powerset{\dgraph^0}$. Also, if $\alpha,\beta\in\awstar$ are such that $\alpha\beta\in\awstar$ then $r(r(A,\alpha),\beta)=r(A,\alpha\beta)$. Finally, for $A,B\in\powerset{\dgraph^0}$ and $\alpha\in\awstar$, it holds that $r(A\cup B,\alpha)=r(A,\alpha)\cup r(B,\alpha)$.

The following notions were introduced by Bates and Pask in \cite{MR2304922}.

\begin{definition}\label{def:labelled.space}
	Let $\lgraph$ be a labelled graph and $\acf\subseteq\powerset{\dgraph^0}$. We say that $\acf$ is \emph{closed under relative ranges} if $r(A,\alpha)\in\acf$ for all $A\in\acf$ and all $\alpha\in\awstar$. If additionally $\acf$ is closed under finite intersections and unions and contains all $r(\alpha)$ for $\alpha\in\awplus$, we say that $\acf$ is \emph{accommodating} for $\lgraph$ and in this case we say that $\lspace$ is a \emph{labelled space}.
\end{definition}

\begin{definition}
	We say that a labelled space $\lspace$ is \emph{weakly left resolving} if for all $A,B\in\acf$ and all $\alpha\in\awplus$ we have $r(A\cap B,\alpha)=r(A,\alpha)\cap r(B,\alpha)$.
\end{definition}

Observe that in a weakly left resolving labelled space, if $A,B\in\acf$ are disjoint then for all $\alpha\in\awstar$, $r(A,\alpha)$ and $r(B,\alpha)$ are disjoint.

\begin{definition}
	For a given $\alpha\in\awstar$, let $\acfra:=\acf\cap\powerset{r(\alpha)}$.
\end{definition}

Note that $\acfrg{\eword}=\acf$. These sets are fundamental to the analysis that follows.

\begin{remark}
	\label{rmk:b.alpha.boolean.algebra}
	Throughout most of the paper, we assume that the accommodating family for the labelled space $\lspace$ is also closed under relative complements; in this case the set $\acfra$ is a Boolean algebra for each $\alpha\in\awplus$, and  $\acfrg{\eword}=\acf$ is a generalized Boolean algebra as in \cite{MR1507106}. By the duality given by Stone's representation theorem  there is a topological space associated with each $\acfra$ with $\alpha\in\awstar$, which we denote by $X_{\alpha}$, consisting of the set of ultrafilters in $\acfra$ (for details, see \cite{MR0161813} for instance).

\end{remark}

\begin{lemma}\label{lemma:wlr.closedcomp.relrangeisnice}
	Let $\lspace$ be a weakly left resolving labelled space such that $\acf$ is closed under relative complements. If $A,B\in\acf$ and $\alpha\in\awstar$ then $r(A\setminus B,\alpha)=r(A,\alpha)\setminus r(B,\alpha)$.
\end{lemma}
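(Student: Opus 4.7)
The plan is to decompose $A$ as the disjoint union $A = (A\setminus B) \sqcup (A\cap B)$ and then push this decomposition through $r(\cdot,\alpha)$, using both the additivity of relative range with respect to unions and the weakly left resolving hypothesis.

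First I would dispatch the trivial case $\alpha=\eword$, where both sides equal $A\setminus B$ by definition of $r(\cdot,\eword)$. For $\alpha\in\awplus$, I would first observe that $A\setminus B$ and $A\cap B$ both lie in $\acf$: the former by the hypothesis that $\acf$ is closed under relative complements, and the latter since $\acf$ is accommodating (hence closed under finite intersections). This matters because the weakly left resolving hypothesis is stated only for sets in $\acf$.

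Next, applying the already-noted additivity $r(X\cup Y,\alpha)=r(X,\alpha)\cup r(Y,\alpha)$ to the decomposition $A=(A\setminus B)\cup(A\cap B)$ yields
\[
r(A,\alpha)=r(A\setminus B,\alpha)\cup r(A\cap B,\alpha).
\]
The weakly left resolving hypothesis rewrites the second term as $r(A,\alpha)\cap r(B,\alpha)$. Moreover, since $(A\setminus B)\cap (A\cap B)=\emptyset$, the observation recorded immediately after the definition of weakly left resolving gives that $r(A\setminus B,\alpha)$ and $r(A\cap B,\alpha)$ are disjoint. So the union above is a disjoint union:
\[
r(A,\alpha)=r(A\setminus B,\alpha)\,\sqcup\,\bigl(r(A,\alpha)\cap r(B,\alpha)\bigr).
\]
Solving for the first summand yields $r(A\setminus B,\alpha)=r(A,\alpha)\setminus\bigl(r(A,\alpha)\cap r(B,\alpha)\bigr)=r(A,\alpha)\setminus r(B,\alpha)$, which is the desired equality.

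There is no real obstacle here; the argument is a formal manipulation, and the only point requiring any care is checking that the sets to which weakly left resolving is applied genuinely belong to $\acf$, which is exactly what the closure under relative complements hypothesis buys us.
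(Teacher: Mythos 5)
Your proof is correct and follows essentially the same route as the paper's: decompose $A=(A\setminus B)\sqcup(A\cap B)$, apply additivity of $r(\,\cdot\,,\alpha)$ and the weakly left resolving hypothesis to get a disjoint union, and solve for $r(A\setminus B,\alpha)$. The only difference is that you explicitly verify the sets involved lie in $\acf$, which the paper leaves implicit.
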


\begin{proof} Let $A,B\in\acf$. Clearly,
	\[r(A\setminus B,\eword)=A\setminus B=r(A,\eword)\setminus r(B,\eword).\]
	
	For $\alpha\in\awplus$, since $A=(A\setminus B)\sqcup(A\cap B)$ (where $\sqcup$ stands for the union of disjoint sets) and the labelled space is weakly left resolving,
	\[r(A,\alpha)=r((A\setminus B)\sqcup(A\cap B),\alpha)=r(A\setminus B,\alpha)\sqcup r(A\cap B,\alpha)\]
	so that
	\dmal{r(A\setminus B,\alpha) & =r(A,\alpha)\setminus (r(A\cap B,\alpha)) \\ & =r(A,\alpha)\setminus(r(A,\alpha)\cap r(B,\alpha)) \\ & =r(A,\alpha)\setminus r(B,\alpha).}
\end{proof}

\subsection{Filters and characters}
\label{subsection:filters.and.characters}

Following \cite{MR2419901}, we recall some basic definitions and properties about filters and characters.

\begin{definition}
	\label{def:filter}
	Let $P$ be a partially ordered set with least element 0. A \emph{filter} in $P$ is a nonempty subset $\xi\subseteq P$ such that
	\begin{enumerate}[(i)]
		\item $0\notin\xi$;
		\item if $x\in\xi$ and $x\leq y$, then $y\in\xi$;
		\item if $x,y\in\xi$, there exists $z\in\xi$ such that $z\leq x$ and $z\leq y$.
	\end{enumerate}
	An \emph{ultrafilter} is a filter which is not properly contained in any filter.
\end{definition}

\begin{remark}
	\label{remark:filter.in.semilattice}
	When $P$ is a (meet) semilattice, condition (iii) may be replaced by $x\wedge y\in\xi$ if $x,y\in\xi$.
\end{remark}

Let $P$ be a partially ordered set with least element $0$ and $x\in P$. We denote the principal filter generated by $x$ by $\uset{x}$, that is,
\[\uset{x}=\{y\in P \ | \ x\leq y\}.\]
Similarly, the principal ideal generated by $x$ is
\[\lset{x}=\{y\in P \ | \ y\leq x\}.\]
When $X$ and $Y$ are subsets of $P$, we use
\[\uset{X} = \bigcup_{x\in X}\uset{x} = \{y\in P \ | \ x\leq y \ \mbox{for some} \ x\in X\},\]
and $\usetr{X}{Y} = Y\cap\uset{X}$; the sets $\usetr{x}{Y}$, $\lsetr{x}{Y}$, $\lset{X}$ and $\lsetr{X}{Y}$ are defined analogously. We remark that $\uset{X}$ and $\usetr{X}{Y}$ are not filters, in general. The next proposition characterizes the subsets $X$ of $P$ for which $\uset{X}$ is a filter.

\begin{proposition}
	\label{prop:subsets.that.generate.filters}
	Let $P$ be a partially ordered set with least element 0 and $X$ be a nonempty subset of $P$. Then $\uset{X}$ is a filter if and only if $0\notin X$ and for every $x,y\in X$, there exists $z\in X$ such that $z\leq x$ and $z\leq y$. In particular, if $P$ is a semilattice then $\uset{X}$ is a filter if and only if $0\notin X$ and for every $x,y\in X$ one has $x\wedge y\in X$.
\end{proposition}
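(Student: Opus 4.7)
The plan is to directly verify the three filter axioms of Definition~\ref{def:filter} for $\uset{X}$, working in both directions of the equivalence. The key technical observation is that $X\subseteq\uset{X}$ and, by the very definition of $\uset{X}$, each of its elements dominates some element of $X$; the proof is essentially a matter of pulling witnesses back from $\uset{X}$ into $X$.

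For the forward implication, suppose $\uset{X}$ is a filter. Since $X\subseteq\uset{X}$ and $0\notin\uset{X}$ by axiom~(i), we immediately get $0\notin X$. Given $x,y\in X\subseteq\uset{X}$, axiom~(iii) produces some $w\in\uset{X}$ with $w\leq x$ and $w\leq y$; unpacking the definition of $\uset{X}$ yields $z\in X$ with $z\leq w$, and transitivity delivers $z\leq x$ and $z\leq y$, as required.

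For the converse, assume $0\notin X$ and the downward-directedness condition on $X$. Nonemptiness of $\uset{X}$ is inherited from $X$. For axiom~(i), if $0\in\uset{X}$ then some $x\in X$ satisfies $x\leq 0$, forcing $x=0$ and contradicting the hypothesis. Axiom~(ii) is immediate from transitivity of $\leq$. For axiom~(iii), given $y_1,y_2\in\uset{X}$, pick $x_i\in X$ with $x_i\leq y_i$ for $i=1,2$; applying the hypothesis to $x_1,x_2$ yields $z\in X$ with $z\leq x_1$ and $z\leq x_2$, so $z\in\uset{X}$ and $z\leq y_1$, $z\leq y_2$.

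The ``in particular'' statement in the semilattice case is then handled by observing that the two forms of the second condition match up: if $x\wedge y\in X$ whenever $x,y\in X$, the downward-directedness condition is witnessed by $z=x\wedge y$; conversely, in a semilattice a lower bound $z\in X$ for $x,y$ is the same as an element of $X$ below $x\wedge y$, and Remark~\ref{remark:filter.in.semilattice} (applied to the filter $\uset{X}$ already produced) bridges this to meet-closure within $X$. The only ``obstacle,'' if one calls it that, is being careful in the forward direction to extract the witness inside $X$ rather than merely inside $\uset{X}$; the rest is bookkeeping.
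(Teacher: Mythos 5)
Your verification of the main biconditional is correct and complete; since the paper's own proof is just the word ``Trivial,'' there is nothing to compare approaches with, and your care in pulling the witness for downward directedness back from $\uset{X}$ into $X$ is exactly the one non-automatic point.

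There is, however, a genuine gap in your handling of the ``in particular'' clause, in the only-if direction. You assert that Remark~\ref{remark:filter.in.semilattice}, applied to the filter $\uset{X}$, ``bridges'' the existence of a common lower bound in $X$ to meet-closure \emph{within $X$}. It does not: that remark only yields $x\wedge y\in\uset{X}$, i.e.\ that \emph{some} element of $X$ lies below $x\wedge y$, which is strictly weaker than $x\wedge y\in X$. In fact no argument can close this gap, because that direction of the semilattice clause is false as literally stated. Take the meet-semilattice $P=\{0,d,c,a,b,1\}$ ordered by $0<d<c<a<1$ and $0<d<c<b<1$ with $a$ and $b$ incomparable, so that $a\wedge b=c$; for $X=\{d,a,b\}$ one has $0\notin X$ and $\uset{X}=\{d,c,a,b,1\}$ is a filter (the element $d$ witnesses downward directedness), yet $a\wedge b=c\notin X$. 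The semilattice clause is correct only as a sufficient condition, which is the direction you do establish (via $z=x\wedge y$) and the only direction the paper ever uses (e.g.\ in the proof of Proposition~\ref{prop:ultrafilters.of.finite.type}). So your argument suffices for everything downstream, but the sentence claiming the converse in the semilattice case should be removed or corrected rather than defended.
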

\begin{proof}
	Trivial.
\end{proof}

\begin{proposition}[\cite{MR2419901}, Lemma 12.3]
	\label{prop:ultrafilter.intersection}
	Let $E$ be a semilattice with 0. A filter $\xi$ in $E$ is an ultrafilter if and only if
	\[\{y\in E \ | \ y\wedge x\neq0 \ \forall\,x\in \xi\}\subseteq \xi.\]
\end{proposition}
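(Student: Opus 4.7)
The plan is to prove the two implications separately, using the natural ``completion'' filter $\uset{\{y\wedge x\mid x\in\xi\}}$ as the main tool.

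For the forward direction, I assume $\xi$ is an ultrafilter and pick any $y$ with $y\wedge x\neq 0$ for all $x\in\xi$. I would form the set $X=\{y\wedge x\mid x\in\xi\}$ and consider $\uset{X}$. Since $0\notin X$ by hypothesis, and since for $y\wedge x_1, y\wedge x_2\in X$ the element $y\wedge(x_1\wedge x_2)\in X$ lies below both, Proposition \ref{prop:subsets.that.generate.filters} implies that $\uset{X}$ is a filter. This filter contains $\xi$ (because $y\wedge x\leq x$ shows $x\in\uset{X}$ for every $x\in\xi$) and it contains $y$ (because $y\wedge x\leq y$ for any fixed $x\in\xi$, which is nonempty). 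Maximality of $\xi$ then forces $\uset{X}=\xi$, hence $y\in\xi$.

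For the converse, I would argue by contrapositive: suppose $\xi$ is a filter that is properly contained in some filter $\eta$, and pick $y\in\eta\setminus\xi$. For every $x\in\xi\subseteq\eta$, both $x$ and $y$ lie in $\eta$, so by Remark \ref{remark:filter.in.semilattice}, $y\wedge x\in\eta$, and in particular $y\wedge x\neq 0$. Thus $y$ belongs to $\{y\in E\mid y\wedge x\neq 0\;\forall x\in\xi\}$ but not to $\xi$, violating the stated inclusion.

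Neither direction presents a real obstacle; the only minor subtlety is checking that $\uset{X}$ in the forward direction is genuinely a filter, which is exactly what Proposition \ref{prop:subsets.that.generate.filters} is designed to handle. The whole proof is essentially a direct application of the semilattice version of that proposition together with the definition of an ultrafilter, so I expect it to be quite short.
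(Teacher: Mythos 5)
Your proof is correct; both directions are sound, and the verification that $\uset{X}$ is a filter via Proposition \ref{prop:subsets.that.generate.filters} is exactly the point that needs checking. The paper itself states this result as a citation of Exel's Lemma 12.3 without reproducing a proof, and your argument is the standard one for that lemma, so there is nothing further to compare.
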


When $E$ is a Boolean algebra, we have another characterization of ultrafilters.

\begin{proposition}[\cite{MR648287}, Theorem IV.3.12]
	\label{prop:ultrafilter.in.boolean.algebra}
	Let $E$ be a Boolean algebra. A filter $\xi$ in $E$ is an ultrafilter if and only if for every $x\in E$, exactly one of $x$ and $\neg x$ is in $\xi$.
\end{proposition}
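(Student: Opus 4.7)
The plan is to prove both implications by leveraging the characterization of ultrafilters in a semilattice given by Proposition~\ref{prop:ultrafilter.intersection}, together with the standard Boolean-algebra identity that $x\wedge y=0$ is equivalent to $y\leq\neg x$.

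For the forward direction, suppose $\xi$ is an ultrafilter in $E$ and fix $x\in E$. Uniqueness is immediate: since $x\wedge\neg x=0$ and a filter is closed under meets, having both $x$ and $\neg x$ in $\xi$ would force $0\in\xi$, contradicting condition~(i) of Definition~\ref{def:filter}. For existence, suppose $x\notin\xi$. Then by Proposition~\ref{prop:ultrafilter.intersection} there must exist some $y\in\xi$ with $x\wedge y=0$, for otherwise $x$ would belong to $\xi$. In a Boolean algebra, $x\wedge y=0$ gives $y\leq\neg x$, and upward closure of $\xi$ (condition~(ii)) yields $\neg x\in\xi$.

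For the converse, assume the dichotomy property holds and let $y\in E$ satisfy $y\wedge x\neq 0$ for every $x\in\xi$; by Proposition~\ref{prop:ultrafilter.intersection} it suffices to show $y\in\xi$. If $y\notin\xi$, then by hypothesis $\neg y\in\xi$, and so the choice $x=\neg y$ in the assumption on $y$ yields $y\wedge\neg y\neq 0$, contradicting $y\wedge\neg y=0$. Hence $y\in\xi$, as required.

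I do not expect any real obstacle here: both directions are routine once one has Proposition~\ref{prop:ultrafilter.intersection} available and uses the translation between meets with complements and the order relation. The only minor point worth attention is to verify at the outset that $\xi$ in the converse direction is genuinely a filter (as assumed in the statement), so that Proposition~\ref{prop:ultrafilter.intersection} applies verbatim and the argument can be phrased as a direct check of its hypothesis.
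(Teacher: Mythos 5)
Your argument is correct. Note, however, that the paper does not prove this proposition at all: it is imported verbatim from the cited reference (Theorem IV.3.12 of \cite{MR648287}), so there is no in-paper proof to compare against. On its own merits, your derivation is sound: both directions reduce cleanly to Proposition \ref{prop:ultrafilter.intersection} (which applies since a Boolean algebra is in particular a semilattice with $0$), combined with the standard equivalence $x\wedge y=0\iff y\leq\neg x$ and the upward closure and meet-closure of filters. The forward direction correctly extracts, from $x\notin\xi$, an element $y\in\xi$ with $x\wedge y=0$ and promotes $y\leq\neg x$ to $\neg x\in\xi$; the converse correctly verifies the hypothesis of Proposition \ref{prop:ultrafilter.intersection} by contradiction. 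This is essentially the textbook argument, and your closing caveat (that $\xi$ is assumed to be a filter in the converse) is already guaranteed by the statement.
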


\begin{definition}
	\label{def:character}
	Let $E$ be a semilattice with $0$. A \emph{character} of $E$ is a nonzero function $\phi$ from $E$ to the Boolean algebra $\{0,1\}$ such that
	\begin{enumerate}[(a)]
		\item $\phi(0)=0$;
		\item $\phi(x\wedge y)=\phi(x)\wedge\phi(y)$, for all $x,y\in E$.
	\end{enumerate}
	The set of all characters of $E$ is denoted by $\hat{E}_0$ and we endow $\hat{E}_0$ with the topology of pointwise convergence.
\end{definition}

The next definitions are adaptations of \cite{MR2419901}.

\begin{definition}
	\label{def:cover}
	Let $E$ be a semilattice with $0$ and $x\in E$. A set $Z\subseteq \lset{x}$ is said to be a \emph{cover} for $x$ if for all nonzero $y\in\lset{x}$, there exists $z\in Z$ such that $z\wedge x\neq 0$.
\end{definition}

\begin{definition}
	\label{def:character.tight}
	Let $E$ be a semilattice with $0$. A character $\phi$ of $E$ is \emph{tight} if for every $x\in E$ and every finite cover $Z$ for $x$, we have
	\[\bigvee_{z\in Z}\phi(z)=\phi(x).\]
	The set of all tight characters of $E$ is denoted by $\hat{E}_{tight}$, and called the \emph{tight spectrum} of $E$.
\end{definition}

We can associate each filter $\xi$ in a semilattice $E$ with a character $\phi_{\xi}$ of $E$ given by
\[\phi_{\xi}(x)=\left\{\begin{array}{lll} 1, & & \mbox{if} \ x\in E, \\ 0, & & \mbox{otherwise.} \end{array} \right.\]
Conversely, when $\phi$ is a character, $\xi_{\phi} = \{x\in E \ | \ \phi(x)=1\}$ is a filter in $E$. Clearly, these maps give a bijection between $\hat{E}_0$ and the set of filters in $E$. We denote by $\hat{E}_{\infty}$ the set of all characters $\phi$ of $E$ such that $\xi_{\phi}$ is an ultrafilter, and a filter $\xi$ in $E$ is said to be \emph{tight} if $\phi_{\xi}$ is a tight character.

\begin{proposition}[\cite{MR2419901}, Proposition 12.7]
	\label{prop:ultrafilter.is.tight}
	Every ultrafilter is tight, that is, $\hat{E}_{\infty}\subseteq\hat{E}_{tight}$.
\end{proposition}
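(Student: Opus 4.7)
The plan is to prove the statement by fixing an ultrafilter $\xi$ in $E$, showing its associated character $\phi_\xi$ satisfies the tight condition for every $x \in E$ and every finite cover $Z$ of $x$. I would split the argument into the two cases $x \notin \xi$ and $x \in \xi$, corresponding to $\phi_\xi(x) = 0$ and $\phi_\xi(x) = 1$ respectively.

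The case $\phi_\xi(x) = 0$ is immediate: since $Z \subseteq \lset{x}$ and $\xi$ is upward closed (condition (ii) of Definition 2.10), having some $z \in Z$ with $z \in \xi$ would force $x \in \xi$, a contradiction. Hence $\phi_\xi(z) = 0$ for every $z \in Z$, and both sides of the tight equation vanish.

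The substantial case is $x \in \xi$. Here I would argue by contradiction, assuming no $z \in Z$ belongs to $\xi$. By Proposition 2.13 (the characterization of ultrafilters via Exel), for each $z \in Z$ there must exist $y_z \in \xi$ with $y_z \wedge z = 0$. Using finiteness of $Z$ and closure of $\xi$ under finite meets (Remark 2.11), form
\[
w \;=\; x \wedge \bigwedge_{z \in Z} y_z \;\in\; \xi,
\]
which in particular is nonzero (since $0 \notin \xi$) and lies in $\lset{x}$. Applying the cover property of $Z$ to $w$ yields some $z_0 \in Z$ with $w \wedge z_0 \neq 0$; but $w \leq y_{z_0}$ forces $w \wedge z_0 \leq y_{z_0} \wedge z_0 = 0$, the desired contradiction. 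Consequently some $z \in Z$ lies in $\xi$, giving $\bigvee_{z \in Z} \phi_\xi(z) = 1 = \phi_\xi(x)$.

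The main obstacle is identifying the correct way to combine the witnesses $y_z$ produced by Proposition 2.13 so that the cover property of $Z$ can be brought to bear on a single element of $\lset{x}$; once one recognizes that meeting all $y_z$ with $x$ produces a nonzero element of $\lset{x}$ (using finiteness of $Z$ in an essential way), the contradiction with the definition of cover falls out automatically. Finiteness of $Z$ is indispensable here — an infinite meet need not belong to $\xi$ — which is also why the tight condition in Definition 2.19 is imposed only on finite covers.
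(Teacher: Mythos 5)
Your proof is correct. The paper itself gives no proof of this statement, citing it directly from Exel's work (Proposition 12.7 of the reference), and your argument is essentially the standard one found there: the trivial case $x\notin\xi$ by upward closure, and in the case $x\in\xi$ the use of the intersection characterization of ultrafilters (Proposition \ref{prop:ultrafilter.intersection}) to produce witnesses $y_z\in\xi$ disjoint from each $z\in Z$, whose finite meet with $x$ contradicts the cover property. One small remark: the paper's Definition \ref{def:cover} contains a typo (``$z\wedge x\neq 0$'' where ``$z\wedge y\neq 0$'' is intended); you implicitly read it the correct way, which is what makes the final contradiction with $w\wedge z_0\neq 0$ go through.
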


Definition \ref{def:character.tight} can be rephrased to characterize tight filters, as in the following result.

\begin{proposition}
	\label{prop:tight.filter.characterization}
	A filter $\xi$ in $E$ is tight if and only if for every $x\in\xi$ and every finite cover $Z$ for $x$, one has $Z\cap\xi\neq\emptyset$.
\end{proposition}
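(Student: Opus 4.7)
The plan is to unfold definitions and reduce both directions to elementary manipulations of Boolean-valued functions. Recall that, by definition, $\xi$ is tight precisely when the associated character $\phi_\xi$ is tight, meaning that for every $x \in E$ and every finite cover $Z$ for $x$ one has $\bigvee_{z\in Z}\phi_\xi(z) = \phi_\xi(x)$. Since the join is taken in $\{0,1\}$ and is therefore just the logical disjunction, this equality translates into the statement: $x \in \xi$ if and only if some $z \in Z$ lies in $\xi$. So the whole proposition is really a matter of verifying that this reformulation is faithful.

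The forward direction is then immediate: if $\xi$ is tight and $x \in \xi$, then $\phi_\xi(x)=1$, forcing $\bigvee_{z\in Z}\phi_\xi(z)=1$ and thus producing some $z \in Z \cap \xi$. For the converse I fix an arbitrary $x \in E$ and a finite cover $Z$ for $x$, and check $\bigvee_{z\in Z}\phi_\xi(z) = \phi_\xi(x)$ by splitting into two cases. If $x \in \xi$, the hypothesis yields some $z \in Z \cap \xi$, so the join is at least $1$; and since any character is monotone (from $z \leq x$ we get $\phi_\xi(z)=\phi_\xi(z\wedge x)=\phi_\xi(z)\wedge\phi_\xi(x)\leq\phi_\xi(x)$) and $Z\subseteq\lset{x}$, the join cannot exceed $\phi_\xi(x)=1$. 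If $x\notin\xi$, then upward-closedness of $\xi$ together with $Z\subseteq\lset{x}$ forbids any $z\in Z$ from belonging to $\xi$ (else $x$ would), so the join equals $0=\phi_\xi(x)$.

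The only mild subtlety to watch is that Definition \ref{def:character.tight} quantifies over all $x \in E$, whereas the condition in the proposition only mentions $x \in \xi$; the case $x \notin \xi$ must therefore be handled separately, but it falls out of upward-closedness without invoking the hypothesis. No serious obstacle is expected.
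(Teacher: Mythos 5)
Your proof is correct and is exactly the straightforward unfolding of Definition \ref{def:character.tight} that the paper has in mind (the paper omits the proof entirely, presenting the proposition as an immediate rephrasing). You correctly identify and handle the only real point, namely that the definition quantifies over all $x\in E$ while the proposition only mentions $x\in\xi$, and the case $x\notin\xi$ is disposed of by upward-closedness of the filter together with $Z\subseteq\lset{x}$.
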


When $E$ is a Boolean algebra, tight filters are easily described:

\begin{proposition}[\cite{MR2419901}, Proposition 11.9]
	\label{prop:tight.filters.in.boolean.algebras}
	Let $E$ be a Boolean algebra and $\xi$ a filter in $E$. The following are equivalent:
	\begin{enumerate}[(i)]
		\item $\xi$ is tight;
		\item $\xi$ is an ultrafilter;
		\item $\phi_{\xi}$ is a morphism of Boolean algebras.
	\end{enumerate}
\end{proposition}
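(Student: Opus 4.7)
The plan is to establish the three equivalences in a cycle. For (ii) $\Rightarrow$ (i), there is nothing to do: this is the content of Proposition \ref{prop:ultrafilter.is.tight}, which applies in any semilattice and a fortiori in a Boolean algebra. So the work is concentrated in proving (i) $\Rightarrow$ (ii) and the equivalence (ii) $\Leftrightarrow$ (iii).

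For (ii) $\Leftrightarrow$ (iii), I would use Proposition \ref{prop:ultrafilter.in.boolean.algebra} as the bridge. Assuming $\xi$ is an ultrafilter, exactly one of $x,\neg x$ belongs to $\xi$ for each $x\in E$, which immediately gives $\phi_{\xi}(\neg x)=\neg\phi_{\xi}(x)$; combined with the fact that $\phi_{\xi}$ already preserves $\wedge$ (being a character) and the De Morgan identity $x\vee y=\neg(\neg x\wedge \neg y)$, this yields a Boolean algebra morphism. Conversely, if $\phi_{\xi}$ preserves complements then for every $x$ exactly one of $\phi_{\xi}(x),\phi_{\xi}(\neg x)$ equals $1$, so by Proposition \ref{prop:ultrafilter.in.boolean.algebra} again, $\xi$ is an ultrafilter. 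These arguments are routine.

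The substantive step is (i) $\Rightarrow$ (ii). Given a tight filter $\xi$, I want to apply Proposition \ref{prop:ultrafilter.in.boolean.algebra}: fix $x\in E$ and any $y\in\xi$, and exhibit a finite cover of $y$ that forces $x$ or $\neg x$ into $\xi$. The natural candidate is $Z=\{x\wedge y,\, \neg x\wedge y\}\subseteq\lset{y}$. To verify it is a cover, take a nonzero $w\in\lset{y}$; in a Boolean algebra $w=(w\wedge x)\vee(w\wedge \neg x)$, so at least one summand is nonzero, meaning $w$ meets either $x\wedge y$ or $\neg x\wedge y$. Tightness of $\xi$ then forces one of $x\wedge y$, $\neg x\wedge y$ into $\xi$; by upward closure, $x\in\xi$ or $\neg x\in\xi$. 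Since they cannot both lie in $\xi$ (else $0=x\wedge \neg x\in\xi$, contradicting the filter axioms), Proposition \ref{prop:ultrafilter.in.boolean.algebra} concludes that $\xi$ is an ultrafilter.

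The only mildly delicate point is confirming that $Z$ really is a cover in the sense of Definition \ref{def:cover}; once this is in hand the rest is bookkeeping. No other subtlety is expected, since the Boolean structure supplies complements on demand and the finiteness of the cover is automatic.
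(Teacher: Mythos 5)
The paper does not prove this proposition---it is imported verbatim from Exel's work (\cite{MR2419901}, Proposition 11.9)---so there is no internal proof to compare against; your argument is correct and is essentially the standard one for this fact. The only point worth flagging is that you implicitly read Definition \ref{def:cover} with the surely intended condition $z\wedge y\neq 0$ in place of the $z\wedge x\neq 0$ that is literally printed there; with that reading, your verification that $\{x\wedge y,\,\neg x\wedge y\}$ is a finite cover of $y$ is sound (note that not both elements can be $0$ since their join is $y\in\xi$), and the remaining implications via Propositions \ref{prop:ultrafilter.is.tight}, \ref{prop:tight.filter.characterization} and \ref{prop:ultrafilter.in.boolean.algebra} all go through.
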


The next theorem is the main result relating ultrafilters and tight characters.

\begin{theorem}[\cite{MR2419901}, Theorem 12.9]
	\label{thm:closure.of.ultrafilters}
	Let $E$ be a semilattice with $0$. Then $\hat{E}_{tight}$ is the closure of $\hat{E}_{\infty}$ in $\hat{E}_0$.
\end{theorem}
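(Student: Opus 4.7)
The plan is to combine Proposition~\ref{prop:ultrafilter.is.tight}, which already provides $\hat{E}_{\infty} \subseteq \hat{E}_{tight}$, with two further ingredients: first, that $\hat{E}_{tight}$ is closed in $\hat{E}_0$, and second, that $\hat{E}_{\infty}$ is dense in $\hat{E}_{tight}$.

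For closedness, I would use that $\hat{E}_0$ sits inside the product $\{0,1\}^E$ with the product topology. For each $x \in E$ and each finite $Z \subseteq \lset{x}$, the set of characters $\phi$ satisfying $\phi(x) = \bigvee_{z \in Z} \phi(z)$ is clopen, being a condition on finitely many coordinates. Intersecting these sets over all pairs $(x,Z)$ with $Z$ a finite cover of $x$ exhibits $\hat{E}_{tight}$ as a closed subset of $\hat{E}_0$.

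The main substance is density. Fix $\phi \in \hat{E}_{tight}$ with associated filter $\xi := \xi_\phi$, and fix a basic open neighborhood of $\phi$ determined by finite sets $F \subseteq \xi$ and $G$ with $G \cap \xi = \emptyset$; I would seek an ultrafilter $\eta$ with $F \subseteq \eta$ and $G \cap \eta = \emptyset$. Set $x := \bigwedge F \in \xi$, which is nonzero, and for each $y \in G$ put $y' := y \wedge x \in \lset{x}$; note that $y' \neq x$, since otherwise $x \leq y$ would force $y \in \xi$. The key claim is that $Z := \{y' : y \in G,\ y' \neq 0\}$ is not a cover of $x$: were it one, tightness of $\phi$ would yield $1 = \phi(x) = \bigvee_{z \in Z} \phi(z)$, placing some $y'$ (and hence the corresponding $y$) in $\xi$, a contradiction. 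Therefore there exists a nonzero $w \in \lset{x}$ with $w \wedge z = 0$ for every $z \in Z$, and Zorn's lemma places $w$ inside some ultrafilter $\eta$. From $w \leq x$ we get $x \in \eta$, hence $F \subseteq \eta$; and for each $y \in G$, using $w \leq x$ one has $w \wedge y = w \wedge y'$, which vanishes immediately when $y' = 0$ and by the defining property of $w$ when $y' \in Z$, ruling out $y \in \eta$.

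I expect the density step to be the main obstacle, as it requires the precise combinatorial maneuver of packaging the meets $y \wedge x$ into $Z$ and invoking the tight hypothesis on $\phi$ to preclude $Z$ from being a cover; once $w$ is in hand, Zorn's lemma finishes the job. Closedness, by contrast, is routine pointwise-convergence bookkeeping.
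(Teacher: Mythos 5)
Your argument is correct, but note that the paper does not prove this statement at all: it is imported verbatim from Exel (Theorem 12.9 of the cited reference), so there is no in-paper proof to compare against. What you have written is essentially a reconstruction of Exel's original argument, adapted to the simplified notion of cover used here (covers of a single $\lset{x}$ rather than of the sets $E^{X,Y}$ appearing in Exel's Definition 11.5), and all three steps go through: the closedness of $\hat{E}_{tight}$ is indeed routine since each tightness equation constrains only finitely many coordinates of $\{0,1\}^E$; the inclusion $\hat{E}_{\infty}\subseteq\hat{E}_{tight}$ is Proposition \ref{prop:ultrafilter.is.tight}; and your density step — packaging the elements $y\wedge x$ for $y\in G$ into a set $Z\subseteq\lset{x}$, using tightness to conclude $Z$ is not a cover of $x$, extracting a nonzero $w\leq x$ disjoint from every member of $Z$, and extending $\uset{w}$ to an ultrafilter by Zorn — is exactly the right combinatorial move. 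Two small housekeeping points. First, you implicitly read Definition \ref{def:cover} with the condition $z\wedge y\neq 0$ rather than the $z\wedge x\neq 0$ printed there; the printed version is a typo (as stated it does not depend on $y$), and your reading is the intended one, without which "not a cover" would not yield the element $w$. Second, if the finite set $F$ is empty the meet $x=\bigwedge F$ is undefined in a semilattice without top; since a character is nonzero, $\xi_\phi\neq\emptyset$ and you can simply take any $x\in\xi_\phi$ in that case, after which the argument is unchanged.
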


The convergence of characters in $\hat{E}_0$ can be translated to the context of filters; thus, a net $\{\xi_{\lambda}\}_{\lambda\in\Lambda}$ of filters in $E$ converges to a filter $\xi$ if for each $x\in E$, there exists $\lambda_0\in\Lambda$ such that for all $\lambda\geq\lambda_0$, $x\in\xi_{\lambda}$ if and only if $x\in\xi$.

\section{Definition of the inverse semigroup}\label{section:inv.semigroup.def}
For this section let us fix a labelled space $\lspace$, and consider the set $T$ of all triples $(\alpha,A,\beta)\in \awstar\times\acf\times\awstar$ for which $A\in\acfra\cap\acfrg{\beta}$, together with an extra element, say $z$. A binary operation on $T$ is defined as follows: for all $t\in T$ define $zt=tz=z$, and for elements $(\alpha,A,\beta),(\gamma,B,\delta)$ in $\awstar\times\acf\times\awstar$ put
\[(\alpha,A,\beta)(\gamma,B,\delta)=\left\{\begin{array}{ll}
(\alpha\gamma ',r(A,\gamma ')\cap B,\delta), & \text{if}\ \  \gamma=\beta\gamma ',\\
(\alpha,A\cap r(B,\beta '),\delta\beta '), & \text{if}\ \  \beta=\gamma\beta ',\\
z, & \text{otherwise}.
\end{array}\right. \]
Thus, nontrivial products occur only if $\beta$ and $\gamma$ are comparable. This product is indeed well-defined, for one can see for instance in the first case that $A\in\acfrg{\alpha}$ implies $r(A,\gamma ')\in\acfrg{\alpha\gamma '}$, which gives $r(A,\gamma ')\cap B\in\acfrg{\alpha\gamma '}$. Similarly, $B\in\acfrg{\delta}$ implies $r(A,\gamma ')\cap B\in\acfrg{\delta}$, establishing the condition $r(A,\gamma ')\cap B\in\acfrg{\alpha\gamma '}\cap\acfrg{\delta}.$

\begin{remark}\label{remark:product.same.middle}
	Note that if $\beta=\gamma$ then $\gamma '=\beta '=\eword$ and, therefore, $r(A,\gamma ')=A, r(B,\beta ')=B$, whence \[(\alpha,A,\beta)(\beta,B,\delta)=(\alpha,A\cap B,\delta).\]
\end{remark}

\begin{proposition}\label{prop:semigroup.T}
	Suppose that the labelled space $\lspace$ is weakly left resolving; then, the set $T$ together with the operation above is a semigroup with zero element given by $z$.
\end{proposition}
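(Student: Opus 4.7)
The plan is to verify, in this order: (i) the product is well-defined (range conditions); (ii) $z$ acts as a two-sided absorbing element; and (iii) the operation is associative. Parts (i) and (ii) are essentially bookkeeping: for $(\alpha,A,\beta)(\gamma,B,\delta)$ one must check that the resulting middle coordinate lies in $\acfrg{\mu}\cap\acfrg{\nu}$, where $\mu,\nu$ are the new outer words, and this follows from the identities $r(A,\gamma')\in\acfrg{\alpha\gamma'}$ (already noted in the text just before the statement), $r(B,\beta')\in\acfrg{\delta\beta'}$, and the observation that $\acf$ is closed under finite intersections. Absorption by $z$ is built into the definition.

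The real content is associativity, which I would prove by case analysis on the two pairs of comparability conditions: which of $\beta,\gamma$ is a beginning of the other, and which of $\delta,\epsilon$ is a beginning of the other. This naturally produces four ``nontrivial'' cases (writing $\gamma=\beta\gamma'$ or $\beta=\gamma\beta'$ in the first slot, and $\epsilon=\delta\epsilon'$ or $\delta=\epsilon\delta'$ in the second), together with the ``zero'' cases where $\beta,\gamma$ or $\delta,\epsilon$ are incomparable. In each nontrivial case one computes both $((\alpha,A,\beta)(\gamma,B,\delta))(\epsilon,C,\eta)$ and $(\alpha,A,\beta)((\gamma,B,\delta)(\epsilon,C,\eta))$ explicitly; the outer words always match by a straightforward word combinatorial argument, and the middle sets are reconciled using two ingredients:
\begin{enumerate}[(a)]
	\item the iteration identity $r(r(A,\mu),\nu)=r(A,\mu\nu)$ stated right after Definition~\ref{def:labelled.space};
	\item the weakly left resolving property, in the strengthened form $r(X\cap Y,\mu)=r(X,\mu)\cap r(Y,\mu)$.
\end{enumerate}
For instance, in the case $\gamma=\beta\gamma'$ and $\epsilon=\delta\epsilon'$, both sides reduce to $(\alpha\gamma'\epsilon',\, r(A,\gamma'\epsilon')\cap r(B,\epsilon')\cap C,\,\eta)$ after invoking (a) and (b), and the other three nontrivial cases are handled by analogous manipulations, each time splitting further into the sub-cases dictated by the comparability of the residual words produced by the first multiplication.

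The cases where some inner product equals $z$ because of incomparability also need to be addressed: I would argue that if $\beta,\gamma$ are incomparable then so are $\beta$ and any word of the form $\gamma\epsilon'$ or $\gamma$ produced by $(\gamma,B,\delta)(\epsilon,C,\eta)$ (a short word-length argument), forcing the right-hand side to be $z$ as well; symmetrically for incomparability of $\delta,\epsilon$.

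The main obstacle is not any single step but the sheer bookkeeping: keeping track of which residual word ($\gamma'$, $\beta'$, $\epsilon'$, $\delta'$, or composites such as $\gamma'\epsilon'$, $\beta''$, $\delta'\beta'$) appears in the middle coordinate after each multiplication, and then recognizing the correct application of (a) and (b) that reduces the two sides to the same triple. Writing the cases in a uniform notation and always applying the weakly left resolving identity \emph{before} the iteration identity (or vice versa, consistently) keeps the computation manageable and makes the symmetry between the two sides visible.
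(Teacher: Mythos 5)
Your proposal is correct and follows essentially the same route as the paper: reduce to associativity, split into the zero cases (handled by the observation that incomparability of $\beta,\gamma$ propagates to $\beta$ versus $\gamma$ or $\gamma\epsilon'$) and four nontrivial cases, and reconcile the middle sets using $r(r(A,\mu),\nu)=r(A,\mu\nu)$ together with the weakly left resolving identity; your worked example is exactly the paper's first case. You also correctly flag the one genuinely delicate point, namely that the mixed case ($\beta=\gamma\beta'$ with $\epsilon=\delta\epsilon'$) requires a further sub-split on the comparability of the residual words after the first multiplication.
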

\begin{proof}
	It must be shown that the operation is associative. Let $s,t,u\in T$ be given. Clearly associativity holds if any of these equals $z$, so suppose they are all in $\awstar\times\acf\times\awstar$, say \[s=(\alpha,A,\beta),t=(\gamma,B,\delta),u=(\mu,C,\nu).\]
	Begin by considering the case $st=z$ and $tu\neq z$, observing that $st=z$ implies $\beta$ and $\gamma$ are not comparable, while $tu\neq z$ says the first entry in $tu$ is either $\gamma$ (if $\delta=\mu\delta '$) or $\gamma\mu '$ (if $\mu=\delta\mu '$); but $\beta$ is not comparable with any of these, so $s(tu)=z=zu=(st)u$.
	
	Evidently the case $st\neq z$ and $tu=z$ follows similarly, and the case $st=tu=z$ is immediate, so it remains to consider $st\neq z$ and $tu\neq z$. There are four cases to investigate, according to how the labels $\beta,\gamma,\delta,\mu$ compare.
	
	If $\gamma=\beta\gamma '$ and $\mu=\delta\mu '$, note that $\gamma\mu '=\beta\gamma '\mu '$ and
	\dmal{(st)u&=(\alpha\gamma ',r(A,\gamma ')\cap B,\delta)(\mu,C,\nu)\\&=(\alpha\gamma '\mu ',r(r(A,\gamma ')\cap B,\mu')\cap C,\nu)\\&=(\alpha\gamma '\mu ',[r(r(A,\gamma '),\mu ')\cap r(B,\mu')]\cap C,\nu)\\&=(\alpha\gamma '\mu ',r(A,\gamma '\mu ')\cap [r(B,\mu')\cap C],\nu)\\&=(\alpha,A,\beta)(\gamma\mu ',r(B,\mu ')\cap C,\nu)\\&=s(tu),} where in the third equality the weakly left resolving hypothesis was used.
	
	The case $\beta=\gamma\beta '$ and $\delta=\mu\delta '$ is analogous, and the case $\gamma=\beta\gamma '$ and $\delta=\mu\delta '$ follows directly from the definition, by comparing the two sides.
	
	Finally, if $\beta=\gamma\beta '$ and $\mu=\delta\mu '$, one has $st=(\alpha,A\cap r(B,\beta '),\delta\beta ')$ and $tu=(\gamma\mu ',r(B,\mu ')\cap C,\nu)$. To evaluate $(st)u$, compare $\delta\beta '$ with $\mu$.
	
	Suppose $\mu$ begins with $\delta\beta '$, say $\mu=\delta\beta '\mu ''$; since $\mu=\delta\mu '$, this is the same as $\mu '=\beta '\mu ''$ and, in particular, $\gamma\mu '=\gamma(\beta '\mu '')=(\gamma\beta ')\mu ''=\beta\mu ''$, so that
	\dmal{(st)u&=(\alpha,A\cap r(B,\beta '),\delta\beta ')(\mu,C,\nu)\\&=(\alpha\mu '',r(A\cap r(B,\beta '),\mu '')\cap C,\nu)\\&=(\alpha\mu '',[r(A,\mu '')\cap r(r(B,\beta '),\mu '')]\cap C,\nu)\\&=(\alpha\mu '',[r(A,\mu '')\cap r(B,\beta '\mu '')]\cap C,\nu)\\&=(\alpha\mu '',r(A,\mu '')\cap [r(B,\mu ')\cap C],\nu)\\&=(\alpha,A,\beta)(\gamma\mu ',r(B,\mu ')\cap C,\nu)\\&=s(tu).}
	
	The case where $\delta\beta '$ begins with $\mu=\delta\mu '$ is analogous, and the case in which they are not comparable is trivial, since then both $(st)u$ and $s(tu)$ equal $z$.
\end{proof}

The next step is to obtain a set $S$ by identifying with $z$ all elements in $T$ of the form $(\alpha,\emptyset,\beta)$, collapsing them all to a single element, represented by $0$, and leaving the other elements in $T$ as they are. The operation on the semigroup $T$ passes to this quotient $S$, turning it into a semigroup with zero element $0$.

In fact, to show the operation is well defined it is enough to see it is well behaved when operating with the zero class, which is easily done: if $s=(\alpha,\emptyset,\beta)$ and $t=(\gamma,B,\delta)$, note that

\begin{itemize}
	\item if $\beta$ and $\gamma$ are not comparable, then $st=z$;
	\item if $\gamma=\beta\gamma '$, then \dmal{st&=(\alpha\gamma ',r(\emptyset,\gamma ')\cap B,\delta)=(\alpha\gamma ',\emptyset\cap B,\delta)=(\alpha\gamma ',\emptyset,\delta);}
	\item if $\beta=\gamma\beta '$ then $st=(\alpha,\emptyset,\delta\beta ')$.
\end{itemize}
Thus, in all three cases the class of $st$ in the quotient equals $0$, and the same can be said for the class of $ts$.

\begin{remark}\label{remark:comparable.product.zero}
	Given $s=(\alpha,A,\beta)$ and $t=(\gamma,B,\delta)$ in $S$, their product $st$ may be equal to $0$ even when $\beta$ and $\gamma$ are comparable, for it may be the case that $r(A,\gamma ')\cap B=\emptyset$ (when $\gamma=\beta\gamma '$) or $A\cap r(B,\beta ')=\emptyset$ (when $\beta=\gamma\beta '$).
\end{remark}

\begin{proposition}\label{prop:inv.semigroup.S}
	Suppose the labelled space $\lspace$ is weakly left resolving. Then $S$ is an inverse semigroup with zero element $0$.
\end{proposition}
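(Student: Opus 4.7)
The plan is to apply the standard criterion that a semigroup with zero is inverse if and only if it is regular and its idempotents commute. Proposition~\ref{prop:semigroup.T} together with the construction of $S$ already gives that $S$ is a semigroup with zero element $0$, so it remains to (i) exhibit a generalized inverse for every element of $S$ and (ii) verify that the idempotents of $S$ commute.

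First I would identify the idempotents. Computing the product of a nonzero triple $(\alpha,A,\beta)$ with itself using the product rule forces $\beta$ and $\alpha$ to be comparable, and a short case analysis in each of the two cases forces $\alpha=\beta$; conversely, Remark~\ref{remark:product.same.middle} shows every $(\alpha,A,\alpha)$ with $A\neq\emptyset$ is an idempotent. Next, for $s=(\alpha,A,\beta)\in S\setminus\{0\}$ I would propose $s^{\ast}:=(\beta,A,\alpha)$ as the inverse. The symmetry of the membership condition $A\in\acfra\cap\acfrg{\beta}$ in $\alpha,\beta$ ensures $s^{\ast}\in S$, and two applications of Remark~\ref{remark:product.same.middle} give
\[
ss^{\ast}s=(\alpha,A,\alpha)(\alpha,A,\beta)=(\alpha,A,\beta)=s,
\]
with $s^{\ast}ss^{\ast}=s^{\ast}$ following by the same computation applied symmetrically, so $S$ is regular.

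For commutativity of idempotents, given $e_1=(\alpha,A,\alpha)$ and $e_2=(\beta,B,\beta)$: if $\alpha,\beta$ are not comparable then both $e_1e_2$ and $e_2e_1$ equal $0$; otherwise, by symmetry assume $\beta=\alpha\beta'$, in which case the two relevant branches of the product formula yield
\[
e_1e_2=(\beta,\,r(A,\beta')\cap B,\,\beta)=(\beta,\,B\cap r(A,\beta'),\,\beta)=e_2e_1.
\]
This concludes the proof. The argument is essentially routine once the candidate $s^{\ast}=(\beta,A,\alpha)$ has been spotted (which is forced by the symmetry of the membership condition); the one mild subtlety is verifying that squaring a triple really does force $\alpha=\beta$, which requires inspecting both comparability cases of the product formula. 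No new use of the weakly left resolving hypothesis is needed here, as it was already absorbed into the associativity in Proposition~\ref{prop:semigroup.T}.
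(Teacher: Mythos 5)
Your proof is correct, but it takes a genuinely different route from the paper's. You invoke the standard semigroup-theoretic criterion that a regular semigroup whose idempotents commute is inverse, which requires you to first determine $E(S)$ completely (showing that every nonzero idempotent has the form $(\alpha,A,\alpha)$, exactly the computation the paper defers to the start of Section 4) and then check commutativity, which reduces via Remark~\ref{remark:product.same.middle} and the product formula to the identity $r(A,\beta')\cap B=B\cap r(A,\beta')$. The paper instead verifies the definition directly: after exhibiting $s^{\ast}=(\beta,A,\alpha)$ as a generalized inverse, it proves \emph{uniqueness} by taking an arbitrary $t=(\gamma,B,\delta)$ with $sts=s$ and $tst=t$ and forcing $\gamma=\beta$, $\delta=\alpha$, $B=A$ through a case analysis on word lengths. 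Your approach buys a shorter argument at the cost of importing an external theorem (regular $+$ commuting idempotents $\Rightarrow$ inverse) that the paper never cites, and it front-loads the description of $E(S)$ that the paper would want anyway; the paper's approach is self-contained and elementary but somewhat longer. Both are valid, and your computations (the square of a triple forcing $\alpha=\beta$, the regularity identities, and the commutation of $e_1e_2$ and $e_2e_1$ in the comparable case) all check out against the product formula. You are also right that weak left resolving enters only through associativity, already settled in Proposition~\ref{prop:semigroup.T}.
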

\begin{proof}
	Let a nonzero $s=(\alpha,A,\beta)\in S$ be given. Then the element $s^\ast$ defined by $(\beta,A,\alpha)$ is such that $s^\ast s=(\beta,A,\beta)$ (see Remark \ref{remark:product.same.middle}), giving \[ss^\ast s=s(s^\ast s)=(\alpha,A,\beta)(\beta,A,\beta)=(\alpha,A,\alpha)=s,\] and also $s^\ast ss^\ast=s^\ast$.
	
	To prove uniqueness of the inverse, suppose $t\in S$ is such that $sts=s$ and $tst=t$, noting that the first of these identities implies $t\neq 0$. Writing $t=(\gamma,B,\delta)$, from $st\neq 0$ it follows that $\beta$ and $\gamma$ are comparable, and from $ts\neq 0$ one sees $\delta$ and $\alpha$ are comparable.
	
	Let us show that $\beta=\gamma$. Indeed, if $\gamma=\beta\gamma '$ with $\gamma '\neq\eword$, by definition one has
	\[s=(st)s=\left\{\begin{array}{ll}
	(\alpha\gamma '\alpha ',r(r(A,\gamma ')\cap B,\alpha ')\cap A,\beta), & \text{if}\ \  \alpha=\delta\alpha ',\\
	(\alpha\gamma ',[r(A,\gamma ')\cap B]\cap r(A,\delta '),\beta\delta '), & \text{if}\ \  \delta=\alpha\delta ',\end{array}\right.\]
	a contradiction since both first entries, $\alpha\gamma '\alpha '$ and $\alpha\gamma '$, are not equal to $\alpha$. The case $\beta=\gamma\beta '$ with $\beta '\neq\eword$ reaches a similar contradiction, using $t=t(st)$ and looking at the resulting third entries instead. An analogous argument establishes that $\alpha=\delta$, and so it can be concluded that $t=(\beta,B,\alpha)$.
	
	Finally, $s=(st)s$ and $t=(ts)t$ now give $A\cap B=A$ and $B\cap A=B$, whence $A=B$, that is, $t=s^\ast$.
\end{proof}

In what follows we describe the filters, ultrafilters and tight filters in $E(S)$.

\section{Filters in $E(S)$}
\label{section:filters.and.characters.in.es}

Recall that if $T$ is an inverse semigroup then its set of idempotents $E(T)$ is a semilattice; the meet between two idempotents $p$ and $q$ is $p\wedge q=pq=qp$ (idempotents in an inverse semigroup commute), and the induced order is given by $p\leq q$ when $pq=p$.

Let $\lspace$ be a labelled space which is weakly left resolving and consider the inverse semigroup $S$ as in the previous section. In this case, the set $E(S)$ of idempotents is a semilattice with $0$. In this section, we describe the elements, order and filters in $E(S)$.

Let $p=(\alpha, A, \beta)$ be a nonzero element in $S$. If $p$ is an idempotent, then either $\beta$ is a beginning of $\alpha$ or $\alpha$ is a beginning of $\beta$. In first case, writing $\alpha=\beta\alpha'$,
\[p^2=(\alpha, A, \beta)(\alpha, A, \beta) = (\alpha\alpha', r(A,\alpha')\cap A, \beta).\]
Since $p^2=p$, we must have $\alpha'=\eword$, hence $\alpha=\beta$. Analogously, when $\alpha$ is a beginning of $\beta$, we also conclude that $\alpha=\beta$. On the other hand, every element in $S$ of the form $(\alpha, A, \alpha)$ is an idempotent. Thus,
\[E(S)=\{(\alpha, A, \alpha) \ | \ \alpha\in\awstar \ \mbox{and} \ A\in\acfra\}\cup\{0\}.\]

\begin{proposition}
	\label{prop:order.in.es}
	Let $p=(\alpha, A, \alpha)$ and $q=(\beta, B, \beta)$ be nonzero elements in $E(S)$. Then $p\leq q$ if and only if $\alpha=\beta\alpha'$ and $A\subseteq r(B,\alpha')$.
\end{proposition}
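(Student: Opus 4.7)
The plan is to use the standard characterisation $p\leq q \iff pq=p$ valid in any semilattice of idempotents, and simply compute the product $pq$ using the definition given in Section~\ref{section:inv.semigroup.def}. Since $p$ is assumed nonzero, the equality $pq=p$ forces $pq\neq 0$, so $\alpha$ (the second coordinate of $p$) and $\beta$ (the first coordinate of $q$) must be comparable; this naturally splits the analysis into two cases.

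First I would dispose of the case $\beta=\alpha\beta'$ with $\beta'\neq\eword$. By the product formula,
\[pq=(\alpha,A,\alpha)(\beta,B,\beta)=(\alpha\beta',\,r(A,\beta')\cap B,\,\beta),\]
whose first entry $\alpha\beta'$ has strictly greater length than $\alpha$, so $pq\neq p$. Hence the only possibility is $\alpha=\beta\alpha'$ for some $\alpha'\in\awstar$ (possibly $\eword$), in which case the product formula yields
\[pq=(\alpha,\,A\cap r(B,\alpha'),\,\beta\alpha')=(\alpha,\,A\cap r(B,\alpha'),\,\alpha),\]
so that $pq=p$ reduces to $A\cap r(B,\alpha')=A$, i.e.\ $A\subseteq r(B,\alpha')$. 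This gives the forward implication.

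For the converse, assuming $\alpha=\beta\alpha'$ and $A\subseteq r(B,\alpha')$, the same computation shows immediately that $pq=(\alpha,A,\alpha)=p$, so $p\leq q$.

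There is no real obstacle here, merely careful bookkeeping: one must keep straight which coordinate of which triple plays the role of $\beta$ and which plays the role of $\gamma$ in the piecewise definition of the product, and verify that the sub-case $\alpha'=\eword$ (so $\alpha=\beta$) is consistently covered, where $r(B,\eword)=B$ reduces the condition to the expected $A\subseteq B$. No appeal to the weakly left resolving hypothesis is needed at this stage, since only a single application of the product formula is involved.
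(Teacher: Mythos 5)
Your proposal is correct and follows essentially the same route as the paper's proof: compute $pq$ via the product formula, observe that $pq=p$ forces $\alpha=\beta\alpha'$ (you just spell out more explicitly why the case $\beta=\alpha\beta'$ with $\beta'\neq\eword$ is impossible), and read off the condition $A\subseteq r(B,\alpha')$ from the resulting middle entry. No issues.
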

\begin{proof}
	Suppose $p\leq q$; by definition this occurs if and only if $(\alpha, A, \alpha)(\beta, B, \beta)=(\alpha, A, \alpha)$, which gives $\alpha=\beta\alpha'$. The multiplication results in $(\alpha, A\cap r(B,\alpha'), \beta\alpha')$ from where it can be deduced that $A\subseteq r(B,\alpha')$. 
	
	For the converse, it is clear that if $\alpha=\beta\alpha'$ and $A\subseteq r(B,\alpha')$, then $pq=p$.
\end{proof} 

\begin{remark}
	\label{rmk:order.in.es}
	The preceding proposition says that ``bigger'' elements in $E(S)$ have ``shorter'' words and ``bigger'' sets (when compared in the same Boolean Algebra). Furthermore, for a fixed $\alpha\in\awstar$ the restricted order in the subset of $E(S)$ given by $\{(\alpha, A, \alpha) \ | \ A\in\acfra\}$ coincides with the natural inclusion order in $\acfra$.
\end{remark}

Let $\xi$ be a filter in $E(S)$ and $p=(\alpha, A,\alpha)$ and $q=(\beta, B, \beta)$ in $\xi$. Since $\xi$ is a filter, then $pq\neq0$; hence, $\alpha$ and $\beta$ are comparable. This says the words of any two elements in a filter are comparable.

\begin{proposition}
	\label{prop:filter.from.finite.word.and.filter}
	Let $\alpha\in\awstar$ and $\ft$ be a filter in $\acfra$. Then
	\[\begin{array}{lll} \xi & = & \displaystyle\bigcup_{A\in\ft}\uset{(\alpha,A,\alpha)} \\
	& = & \{(\alpha_{1,i},A,\alpha_{1,i})\in E(S) \ | \ 0\leq i\leq |\alpha| \ \mbox{and} \ r(A,\alpha_{i+1,|\alpha|})\in\ft\} \end{array}\]
	is a filter in $E(S)$.
\end{proposition}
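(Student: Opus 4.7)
The plan is to verify two things: first, that the two sets in the display coincide, and second, that the resulting set is a filter in $E(S)$.

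For the equality of the two descriptions, I would rely on Proposition \ref{prop:order.in.es} to rewrite the condition $(\beta,B,\beta)\geq (\alpha,A,\alpha)$ as: $\beta$ is a beginning of $\alpha$, say $\beta=\alpha_{1,i}$, and $A\subseteq r(B,\alpha_{i+1,|\alpha|})$. Note that $r(B,\alpha_{i+1,|\alpha|})$ lies in $\acfra$, since $\acf$ is closed under relative ranges and $r(B,\alpha_{i+1,|\alpha|})\subseteq r(r(\alpha_{1,i}),\alpha_{i+1,|\alpha|})=r(\alpha)$. Thus, the existence of some $A\in\ft$ with $A\subseteq r(B,\alpha_{i+1,|\alpha|})$ is equivalent, by upward closure of the filter $\ft$, to $r(B,\alpha_{i+1,|\alpha|})\in\ft$ itself. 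This gives the asserted equality.

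For the filter axioms, I would check the three conditions in turn. The condition $0\notin\xi$ holds because $\emptyset\notin\ft$ and, for any potential element $(\alpha_{1,i},\emptyset,\alpha_{1,i})$ of the second description, we would have $r(\emptyset,\alpha_{i+1,|\alpha|})=\emptyset\notin\ft$. Upward closure is immediate from the first description, since $\xi$ is displayed as a union of principal filters in $E(S)$. By Remark \ref{remark:filter.in.semilattice}, closure under meets is the only remaining point.

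For the meet condition, take $p=(\alpha_{1,i},B,\alpha_{1,i})$ and $q=(\alpha_{1,j},C,\alpha_{1,j})$ in $\xi$ and assume without loss of generality that $i\leq j$, so that $\alpha_{1,j}=\alpha_{1,i}\alpha_{i+1,j}$. The semigroup operation then yields
\[pq=(\alpha_{1,j},\,r(B,\alpha_{i+1,j})\cap C,\,\alpha_{1,j}).\]
Using the weakly left resolving hypothesis together with the identity $r(r(\cdot,\gamma),\delta)=r(\cdot,\gamma\delta)$, I compute
\[r(r(B,\alpha_{i+1,j})\cap C,\alpha_{j+1,|\alpha|})=r(B,\alpha_{i+1,|\alpha|})\cap r(C,\alpha_{j+1,|\alpha|}),\]
which belongs to $\ft$ since both factors do (by hypothesis on $p$ and $q$) and $\ft$ is closed under finite meets. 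Hence $pq\in\xi$. The main point demanding care is this last computation, where the weakly left resolving property is used in an essential way; the remainder is a direct transcription of the definitions and Proposition \ref{prop:order.in.es}.
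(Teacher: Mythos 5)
Your proposal is correct, and its overall structure --- first establishing the equality of the two descriptions via Proposition \ref{prop:order.in.es} and upward closure of \ft{}, then checking the filter axioms, with upward closure of $\xi$ immediate from the first description --- coincides with the paper's. The one place where you genuinely diverge is the closure-under-meets step. The paper stays with the first description: given $p,q\in\xi$, it picks $A,B\in\ft$ with $(\alpha,A,\alpha)\leq p$ and $(\alpha,B,\alpha)\leq q$, notes $A\cap B\in\ft$, and exhibits $(\alpha,A\cap B,\alpha)$ as a common lower bound in $\xi$; this uses only that $\ft$ is closed under intersections and never touches the weakly left resolving hypothesis. You instead compute the product $pq=(\alpha_{1,j},r(B,\alpha_{i+1,j})\cap C,\alpha_{1,j})$ explicitly and verify membership through the second description, which forces you to invoke the weakly left resolving identity $r(X\cap Y,\gamma)=r(X,\gamma)\cap r(Y,\gamma)$. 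Both arguments are valid (weak left resolving is a standing assumption throughout this section, and your conclusion $r(pq\text{-set},\alpha_{j+1,|\alpha|})\in\ft$ automatically rules out $pq=0$ since $\emptyset\notin\ft$), but the paper's route is slightly more economical and makes visible that this particular proposition does not actually need the weakly left resolving property for the meet step.
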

\begin{proof}
	First we show that the sets in the statement are equal: let $A\in\ft$ and $p=(\beta,B,\beta)\in\uset{(\alpha,A,\alpha)}$. From the order in $E(S)$ and Proposition \ref{prop:order.in.es}, we obtain $\alpha=\beta\alpha'$ and $A\subseteq r(B,\alpha')$ so that $\beta=\alpha_{1,i}$ for some $i$ and $r(B,\alpha_{i+1,|\alpha|})=r(B,\alpha')\in\ft$, since $A\in\ft$ and $\ft$ is a filter.
	
	Conversely, let $p=(\alpha_{1,i},A,\alpha_{1,i})$ be such that $r(A,\alpha_{i+1,|\alpha|})\in\ft$. Together with $(\alpha, r(A,\alpha_{i+1,|\alpha|}),\alpha)\leq (\alpha_{1,i},A,\alpha_{1,i})$, this shows that $p\in\bigcup_{A\in\ft}\uset{(\alpha,A,\alpha)}$.
	
	Now we show that $\xi$ is a filter: clearly $\xi$ is nonempty, $0\notin\xi$ and if $p\in\xi$ and $p\geq q$ then $q\in\xi$. Let $p,q\in\xi$ and choose $A$ and $B$ in $\ft$ such that $p\geq (\alpha,A,\alpha)$ and $q\geq (\alpha,B,\alpha)$. We have that $A\cap B\in\ft$, given that $\ft$ is a filter, whence $r=(\alpha,A\cap B,\alpha)\in\xi$. Since $r\leq p$ and $r\leq q$, the proof is complete.
\end{proof}

\begin{proposition}
	\label{prop:finite.word.and.filter.from.filter}
	Let $\xi$ be a filter in $E(S)$ and suppose there exists a word $\alpha\in\awstar$ such that $(\alpha,A,\alpha)\in\xi$ for some $A\in\acfra$ and $\alpha$ has the largest length among all $\beta$ such that $(\beta,B,\beta)\in\xi$ for some $B\in\acfrg{\beta}$. Define
	\[\ft=\{A\in\acf \ | \ (\alpha,A,\alpha)\in\xi\}.\]
	Then $\ft$ is a filter in $\acfra$ and
	\[\xi = \bigcup_{A\in\ft}\uset{(\alpha,A,\alpha)}.\]
\end{proposition}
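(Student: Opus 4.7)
The plan is to verify the two assertions separately. First I would check that $\ft$ is a filter in $\acfra$. Nonemptiness is immediate from the hypothesis that some $(\alpha,A,\alpha)$ lies in $\xi$, and $\emptyset\notin\ft$ because $(\alpha,\emptyset,\alpha)=0$ in $S$, which is not in $\xi$. Upward closure (inside $\acfra$) follows from Proposition \ref{prop:order.in.es}: if $A\in\ft$ and $A\subseteq B\in\acfra$, then in $E(S)$ we have $(\alpha,A,\alpha)\leq (\alpha,B,\alpha)$ (taking $\alpha'=\eword$, so $r(B,\eword)=B$), and since $\xi$ is a filter, $(\alpha,B,\alpha)\in\xi$. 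For closure under intersections, if $A,B\in\ft$ then $(\alpha,A,\alpha)\wedge(\alpha,B,\alpha)=(\alpha,A\cap B,\alpha)$ by Remark \ref{remark:product.same.middle}; as a filter is closed under meets and cannot contain $0$, we obtain $A\cap B\neq\emptyset$ and $A\cap B\in\ft$.

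For the set equality, the inclusion $\bigcup_{A\in\ft}\uset{(\alpha,A,\alpha)}\subseteq\xi$ is immediate since $(\alpha,A,\alpha)\in\xi$ for each $A\in\ft$ and $\xi$ is upward closed. For the reverse inclusion, take any $p=(\beta,B,\beta)\in\xi$. Fix some $A_0\in\ft$, so that $(\alpha,A_0,\alpha)\in\xi$. Since $\xi$ is a filter, there is some $q=(\gamma,C,\gamma)\in\xi$ with $q\leq p$ and $q\leq(\alpha,A_0,\alpha)$. By Proposition \ref{prop:order.in.es}, $\gamma$ begins with both $\alpha$ and $\beta$. This is the step where the hypothesis on $\alpha$ is used: the maximality of $|\alpha|$ among lengths of words appearing in $\xi$ forces $|\gamma|=|\alpha|$, hence $\gamma=\alpha$, and consequently $\beta$ is a beginning of $\alpha$, say $\alpha=\beta\alpha'$. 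Then $q=(\alpha,C,\alpha)\in\xi$ gives $C\in\ft$, and $q\leq p$ means $p\in\uset{(\alpha,C,\alpha)}\subseteq\bigcup_{A\in\ft}\uset{(\alpha,A,\alpha)}$, as desired.

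The main obstacle is the last inclusion, which crucially relies on the maximality hypothesis on $|\alpha|$; without it one could not rule out that the common refinement $q$ sits strictly below $(\alpha,A_0,\alpha)$ with a longer word, which would prevent rewriting $q$ as an element of the form $(\alpha,C,\alpha)$. Everything else reduces to bookkeeping with Proposition \ref{prop:order.in.es} and Remark \ref{remark:product.same.middle}.
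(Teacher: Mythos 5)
Your proposal is correct and follows essentially the same route as the paper's proof: the filter property of $\ft$ comes from the fact that the order on elements with fixed word $\alpha$ agrees with the order on $\acfra$, and the reverse inclusion uses the maximality of $|\alpha|$ to force the word of any element of $\xi$ to be a beginning of $\alpha$. The only cosmetic difference is that the paper computes the meet $pq=(\alpha,A\cap r(B,\alpha'),\alpha)$ explicitly, whereas you invoke an abstract common lower bound $q$ from the filter axiom and then identify its word with $\alpha$; both arguments hinge on the same maximality step.
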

\begin{proof}
	When we restrict the order of $E(S)$ to elements with the same word $\alpha$, the order coincides with the order in $\acfra$, hence $\ft$ is a filter. It is easy to see that $\bigcup_{A\in\ft}\uset{(\alpha,A,\alpha)}\subseteq \xi$. On the other hand, let $p=(\beta,B,\beta)\in\xi$ and choose $A\in\acfra$ such that $q=(\alpha,A,\alpha)\in\xi$. Since $\xi$ is a filter, then $p\wedge q=pq\in\xi$ and $pq\neq0$. Therefore, $\beta$ must be a beginning of $\alpha$ (because $\alpha$ has the largest length), say $\alpha=\beta\alpha'$, and $pq=(\alpha,A\cap r(B,\alpha'),\alpha)$. From this we conclude that $A\cap r(B,\alpha')\in\ft$, whence $p\in\uset{(\alpha,A\cap r(B,\alpha'),\alpha)}\subseteq\bigcup_{A\in\ft}\uset{(\alpha,A,\alpha)}$.
\end{proof}

In Proposition \ref{prop:filter.from.finite.word.and.filter}, we construct a filter with a largest word in $E(S)$ from a (finite) word $\alpha$ and a filter in $\acfra$, whereas with Proposition \ref{prop:finite.word.and.filter.from.filter}, we go in the opposite direction. It is easy to see that these constructions are each other's inverses, and therefore there is a bijective correspondence between filters in $E(S)$ with largest word and pairs $(\alpha,\ft)$, where $\alpha$ is a finite word and $\ft$ is a filter in $\acfra$.

Our next goal is to obtain a characterization of the filters in $E(S)$ which do not admit a word with largest length.

\begin{definition}
	\label{def:admissible.and.complete.families}
	Let $\alpha\in\awinf$ and $\{\ftg{F}_n\}_{n\geq0}$ be a family such that $\ftg{F}_n$ is a filter in $\acfrg{\alpha_{1,n}}$ for every $n>0$ and $\ftg{F}_0$ is a filter in $\acf$ or $\ftg{F}_0=\emptyset$. The family $\{\ftg{F}_n\}_{n\geq0}$ is said to be \emph{admissible for} $\alpha$ if
	\[\ftg{F}_n\subseteq\{A\in \acfrg{\alpha_{1,n}} \ | \ r(A,\alpha_{n+1})\in\ftg{F}_{n+1}\}\]
	for all $n\geq0$, and is said to be \emph{complete for} $\alpha$ if
	\[\ftg{F}_n = \{A\in \acfrg{\alpha_{1,n}} \ | \ r(A,\alpha_{n+1})\in\ftg{F}_{n+1}\}\]
	for all $n\geq0$.
\end{definition}

\begin{lemma}
	\label{lemma:equivalence.for.complete.families}
	Let $\alpha\in\awinf$ and $\{\ftg{F}_n\}_{n\geq0}$ be a family such that $\ftg{F}_n$ is a filter in $\acfrg{\alpha_{1,n}}$ for every $n>0$ and $\ftg{F}_0$ is a filter in $\acf$ or $\ftg{F}_0=\emptyset$. Then $\{\ftg{F}_n\}_{n\geq0}$ is complete if and only if
	\[\ftg{F}_n = \{A\in \acfrg{\alpha_{1,n}} \ | \ r(A,\alpha_{n+1,m})\in\ftg{F}_{m}\}\]
	for all $n\geq0$ and all $m>n$.
\end{lemma}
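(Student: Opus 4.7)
The plan is to handle the easy direction first and then induct on $m-n$ for the hard direction. For the ``if'' direction, just specialize the displayed equality to $m = n+1$: one recovers exactly the defining equation for completeness, so $\{\ftg{F}_n\}_{n\geq 0}$ is complete.

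For the ``only if'' direction, I would fix $n \geq 0$ and proceed by induction on $m > n$. The base case $m = n+1$ is precisely the completeness hypothesis applied at index $n$. For the inductive step, assume the statement holds for some $m > n$, so
\[\ftg{F}_n = \{A\in \acfrg{\alpha_{1,n}} \ | \ r(A,\alpha_{n+1,m})\in\ftg{F}_{m}\},\]
and also apply completeness at index $m$ to obtain
\[\ftg{F}_m = \{B\in \acfrg{\alpha_{1,m}} \ | \ r(B,\alpha_{m+1})\in\ftg{F}_{m+1}\}.\]
Then given $A \in \acfrg{\alpha_{1,n}}$, I chain the two equivalences: $A \in \ftg{F}_n$ iff $r(A,\alpha_{n+1,m}) \in \ftg{F}_m$ iff $r(r(A,\alpha_{n+1,m}),\alpha_{m+1}) \in \ftg{F}_{m+1}$.

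To finish, I would invoke the identity $r(r(C,\beta),\gamma) = r(C,\beta\gamma)$ recorded just before Definition \ref{def:labelled.space}, with $C = A$, $\beta = \alpha_{n+1,m}$ and $\gamma = \alpha_{m+1}$, which gives $r(r(A,\alpha_{n+1,m}),\alpha_{m+1}) = r(A,\alpha_{n+1,m+1})$. Thus $A \in \ftg{F}_n$ iff $r(A,\alpha_{n+1,m+1}) \in \ftg{F}_{m+1}$, completing the induction. No step is really an obstacle; the only thing to watch is the indexing convention $\alpha_{n+1,m}\alpha_{m+1} = \alpha_{n+1,m+1}$, but this is immediate from the definition of $\alpha_{i,j}$.
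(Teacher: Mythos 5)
Your proposal is correct and follows exactly the route the paper indicates: the paper's proof simply states that the ``if'' part is clear and the ``only if'' part follows by induction using the identity $r(r(A,\beta),\gamma)=r(A,\beta\gamma)$, which is precisely the argument you spell out.
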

\begin{proof}
	The ``if'' part is clear and the ``only if'' part follows easily by induction using that $r(r(A,\beta),\gamma)=r(A,\beta\gamma)$ if $\beta,\gamma\in\awstar$ are such that $\beta\gamma\in\awstar$.
\end{proof}

\begin{proposition}
	\label{prop:filter.from.admissible.family}
	Let $\alpha\in\awinf$, $\{\ftg{F}_n\}_{n\geq0}$ be an admissible family for $\alpha$ and define
	\[\xi = \bigcup_{n=0}^{\infty}\bigcup_{A\in\ftg{F}_n}\uset{(\alpha_{1,n},A,\alpha_{1,n})}.\]
	Then $\xi$ is a filter in $E(S)$.
\end{proposition}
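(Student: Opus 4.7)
The plan is to verify the three filter axioms for $\xi$, and the admissibility condition will be what powers the nontrivial step (closure under meets). Nonemptiness is immediate: by hypothesis, at least one of $\ftg{F}_0$ (if nonempty) or each $\ftg{F}_n$ with $n\ge 1$ is a filter in $\acfrg{\alpha_{1,n}}$, hence nonempty, so $\xi$ contains the corresponding idempotents $(\alpha_{1,n},A,\alpha_{1,n})$. Moreover $0\notin\xi$ because each $A\in\ftg{F}_n$ is nonempty (filters exclude $0$), so $\uset{(\alpha_{1,n},A,\alpha_{1,n})}$ avoids $0$. Upward closure is automatic: $\xi$ is presented as a union of principal upper sets in $E(S)$, and any union of upward-closed sets is upward-closed.

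The substantive step is closure under binary meets. Given $p,q\in\xi$, by definition there exist $n,m\ge 0$, $A\in\ftg{F}_n$, $B\in\ftg{F}_m$ with $p\ge(\alpha_{1,n},A,\alpha_{1,n})$ and $q\ge(\alpha_{1,m},B,\alpha_{1,m})$. Assume without loss of generality $n\le m$. The key move is to push $A$ up to level $m$: by iterating the admissibility condition $\ftg{F}_k\subseteq\{X:r(X,\alpha_{k+1})\in\ftg{F}_{k+1}\}$ for $k=n,n+1,\ldots,m-1$ and using the identity $r(r(A,\alpha_{n+1,k}),\alpha_{k+1})=r(A,\alpha_{n+1,k+1})$ (the same identity used in Lemma \ref{lemma:equivalence.for.complete.families}), I obtain $r(A,\alpha_{n+1,m})\in\ftg{F}_m$. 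Since $\ftg{F}_m$ is a filter in $\acfrg{\alpha_{1,m}}$, it is closed under intersections, so $C:=r(A,\alpha_{n+1,m})\cap B\in\ftg{F}_m$, and in particular $C\neq\emptyset$.

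Set $r=(\alpha_{1,m},C,\alpha_{1,m})$; then $r\in\xi$. Using Proposition \ref{prop:order.in.es} and the decomposition $\alpha_{1,m}=\alpha_{1,n}\cdot\alpha_{n+1,m}$, together with $C\subseteq r(A,\alpha_{n+1,m})$ and $C\subseteq B$, I get $r\le(\alpha_{1,n},A,\alpha_{1,n})\le p$ and $r\le(\alpha_{1,m},B,\alpha_{1,m})\le q$. Thus $r$ is a common lower bound of $p$ and $q$ in $\xi$, completing the verification of condition (iii) of Definition \ref{def:filter}. The main obstacle is really just the bookkeeping in this last paragraph: one must be careful to route $A$ through the correct sequence of relative ranges so that it lives in the same Boolean algebra $\acfrg{\alpha_{1,m}}$ as $B$ before intersecting, which is exactly what admissibility is designed to permit.
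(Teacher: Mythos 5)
Your proof is correct and follows essentially the same route as the paper: the paper organizes the argument by showing each level $\xi_n=\bigcup_{A\in\ftg{F}_n}\uset{(\alpha_{1,n},A,\alpha_{1,n})}$ is a filter (via Proposition \ref{prop:filter.from.finite.word.and.filter}) and that these are nested using admissibility, whereas you inline the same mechanism by iterating admissibility to push $A$ from level $n$ to level $m$ and intersecting there. The key step — that admissibility lets you find a common lower bound at the deeper level — is identical in both arguments.
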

\begin{proof}
	We already know that, for each $n\geq0$,
	\[\xi_n=\bigcup_{A\in\ftg{F}_n}\uset{(\alpha_{1,n},A,\alpha_{1,n})}\]
	is a filter in $E(S)$ (except if $\ftg{F}_0=\emptyset$, which would then give $\xi_0=\emptyset$). We claim that $\xi_n$ is contained in $\xi_{n+1}$ for every $n\geq0$. Indeed, let $p\in\xi_n$ and choose $q=(\alpha_{1,n},A,\alpha_{1,n})\in\xi_n$ such that $q\leq p$. Since the family is admissible, we conclude that $r(A,\alpha_{n+1})\in\ftg{F}_{n+1}$ and, hence, $r=(\alpha_{1,n+1},r(A,\alpha_{n+1}),\alpha_{1,n+1})\in\xi_{n+1}$. Given that $\xi_{n+1}$ is a filter and $r\leq q\leq p$, then $p\in\xi_{n+1}$. This claim says the filters $\xi_n$ are nested, from where we deduce that their union is a filter.
\end{proof}

\begin{proposition}
	\label{prop:complete.family.from.filter}
	Let $\xi$ be a filter in $E(S)$ and suppose there is no word in $\gamma\in\awstar$ such that $(\gamma,C,\gamma)\in\xi$ for some $C\in\acfrg{\gamma}$ and $\gamma$ possesses largest length among all $\beta$ such that $(\beta,B,\beta)\in\xi$ for some $B\in\acfrg{\beta}$. Then there exists $\alpha\in\awinf$ such that every $p\in\xi$ can be written as $p=(\alpha_{1,n},A,\alpha_{1,n})$ for some $n\geq0$ and some $A\in\acfrg{\alpha_{1,n}}$. Moreover, if we define for each $n\geq0$,
	\[\ftg{F}_n=\{A\in\acf \ | \ (\alpha_{1,n},A,\alpha_{1,n})\in\xi\},\]
	then $\{\ftg{F}_n\}_{n\geq0}$ is a complete family for $\alpha$.
\end{proposition}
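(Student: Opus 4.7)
The plan is to build an infinite word $\alpha$ from the word coordinates of elements of $\xi$, verify that every $p \in \xi$ has a prefix of $\alpha$ as its word coordinate, and then check completeness of the family $\{\ftg{F}_n\}_{n \geq 0}$. Since any two elements of $\xi$ must have comparable word coordinates (otherwise their product in $E(S)$ is $0$), and by hypothesis $\xi$ contains elements with words of arbitrarily large length, for each $n \geq 1$ I can pick $(\beta^{(n)}, B_n, \beta^{(n)}) \in \xi$ with $|\beta^{(n)}| \geq n$. Pairwise comparability of the $\beta^{(n)}$ then lets me define an infinite word $\alpha$ unambiguously by declaring its $i$th letter to be the $i$th letter of any $\beta^{(n)}$ with $|\beta^{(n)}| \geq i$. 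Each finite prefix $\alpha_{1,n}$ is a prefix of some $\beta^{(k)}$ and hence labels a finite path, which is what I will use to place $\alpha$ in $\awinf$. For any $p = (\beta, B, \beta) \in \xi$, choosing $k$ with $|\beta^{(k)}| \geq |\beta|$, comparability of $\beta$ with $\beta^{(k)}$ forces $\beta$ to be a prefix of $\beta^{(k)}$, hence $\beta = \alpha_{1,|\beta|}$.

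Next I will verify that $\{\ftg{F}_n\}_{n\geq0}$ is a complete family. For every $n \geq 1$, upward closure applied to $(\beta^{(n)}, B_n, \beta^{(n)})$ places $(\alpha_{1,n}, r(\alpha_{1,n}), \alpha_{1,n})$ in $\xi$, so $\ftg{F}_n$ is nonempty; the filter axioms then follow as in Proposition \ref{prop:finite.word.and.filter.from.filter}, since on idempotents with a fixed word the order of $E(S)$ restricts to inclusion in $\acfrg{\alpha_{1,n}}$. To prove completeness at each $n \geq 0$, for $A \in \ftg{F}_n$ I will compute the meet in $\xi$ of $(\alpha_{1,n}, A, \alpha_{1,n})$ and $(\alpha_{1,n+1}, r(\alpha_{1,n+1}), \alpha_{1,n+1})$, which simplifies to $(\alpha_{1,n+1}, r(A, \alpha_{n+1}), \alpha_{1,n+1}) \in \xi$, thereby giving $r(A, \alpha_{n+1}) \in \ftg{F}_{n+1}$. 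The reverse inclusion follows from Proposition \ref{prop:order.in.es} --- which yields $(\alpha_{1,n+1}, r(A, \alpha_{n+1}), \alpha_{1,n+1}) \leq (\alpha_{1,n}, A, \alpha_{1,n})$ --- combined with upward closure. The edge case $\ftg{F}_0 = \emptyset$ is handled analogously: any $A \in \acf$ with $r(A, \alpha_1) \in \ftg{F}_1$ would force $(\eword, A, \eword) \in \xi$ by the same order argument, putting $A$ into $\ftg{F}_0$ and contradicting emptiness.

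The hard part will be justifying that $\alpha$ actually lies in $\awinf = \lbf(\dgraph^\infty)$: knowing that every finite prefix $\alpha_{1,n}$ labels some finite path does not automatically produce a single infinite path in $\dgraph$ labeled by $\alpha$. Bridging this gap will likely require exploiting the Boolean algebra structure of each $\acfrg{\alpha_{1,n}}$ together with the compatibility encoded in the nested filters $\ftg{F}_n$, perhaps via a K\"onig-type compactness argument on the trees of finite paths realizing the prefixes $\alpha_{1,n}$; every other step of the proof rests only on the word-level comparability of filter elements and the upward closure used above.
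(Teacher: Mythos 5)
Your argument reproduces the paper's proof essentially step for step: the infinite word $\alpha$ is assembled letter by letter from the pairwise-comparable words of elements of $\xi$; nonemptiness of $\ftg{F}_n$ comes from $(\alpha_{1,n},r(\alpha_{1,n}),\alpha_{1,n})\geq(\beta^{(n)},B_n,\beta^{(n)})$ and upward closure; the filter property comes from the order on fixed-word idempotents coinciding with inclusion in $\acfrg{\alpha_{1,n}}$; and completeness is proved by taking a meet inside $\xi$ in one direction and combining Proposition \ref{prop:order.in.es} with upward closure in the other. (The paper meets $(\alpha_{1,n},A,\alpha_{1,n})$ with an arbitrary $(\alpha_{1,n+1},B,\alpha_{1,n+1})$, $B\in\ftg{F}_{n+1}$, rather than with $(\alpha_{1,n+1},r(\alpha_{1,n+1}),\alpha_{1,n+1})$; this is immaterial.) Your handling of the $\ftg{F}_0=\emptyset$ case is also correct.

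The one step you flag as hard --- showing $\alpha\in\awinf$ --- is a step the paper does not argue at all: it simply writes ``hence we have a word $\alpha=\alpha_1\alpha_2\ldots\in\awinf$''. Your instinct that this needs justification is sound, but the K\"onig-type argument you hope for will not close it: the tree of finite paths representing the prefixes $\alpha_{1,n}$ need not be finitely branching, and in fact one can build a weakly left resolving labelled space (with $\acf=\powerset{\dgraph^0}$, closed under relative complements) consisting of disjoint strands of lengths $1,2,3,\ldots$ all labelled by powers of a single letter $a$ and each ending in a sink; there every $a^n$ lies in $\awstar$ and the idempotents $(a^n,r(a^n),a^n)$ generate a filter with no largest word, yet $a^\infty$ is not the label of any infinite path. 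So with $\awinf=\lbf(\dgraph^\infty)$ read literally, the membership $\alpha\in\awinf$ can fail; this is a defect of the paper's proof as much as of yours. The statement becomes correct, and both proofs go through verbatim, if one takes $\awinf$ to be the set of infinite words all of whose finite prefixes lie in $\awstar$. Apart from this shared issue, your proposal is correct and is the paper's argument.
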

\begin{proof}
	For each $n>0$, choose $p=(\beta,B,\beta)\in\xi$ such that $|\beta|\geq n$ and define $\alpha_n=\beta_n$. Since any two elements in $\xi$ have comparable words, then $\alpha_n$ is well defined. Hence, we have a word $\alpha=\alpha_1\alpha_2\ldots\in\awinf$. Let $\ftg{F}_n$ as in the statement. We claim that $\ftg{F}_n$ is nonempty for all $n>0$. Indeed, fix $n>0$ e choose $m\geq n$ such that there is $p=(\alpha_{1,m},A,\alpha_{1,m})\in\xi$. Observe that $q=(\alpha_{1,n},r(\alpha_{1,n}),\alpha_{1,n})\geq p$ and, because $\xi$ is a filter, $q\in\xi$. This shows that $r(\alpha_{1,n})\in\ftg{F}_n$.
	
	Again, since the order in $E(S)$ is the same as the order in $\acfrg{\alpha_{1,n}}$ when we consider only elements with the word $\alpha_{1,n}$, we conclude that $\ftg{F}_n$ is a filter in $\acfrg{\alpha_{1,n}}$ for all $n\geq0$ (except for $n=0$ where $\ftg{F}_0$ may be the empty set).
	
	It remains to show that the family is complete. Fix $n\geq0$ and denote by $\ftg{G}$ the set $\{A\in \acfrg{\alpha_{1,n}} \ | \ r(A,\alpha_{n+1})\in\ftg{F}_{n+1}\}$. Let $A\in\ftg{F}_n$. Since $\ftg{F}_{n+1}$ is nonempty, we can choose $B\in\ftg{F}_{n+1}$. Because $\xi$ is a filter and both $p=(\alpha_{1,n},A,\alpha_{1,n})$ and $q=(\alpha_{1,n+1},B,\alpha_{1,n+1})$ belong to $\xi$, it follows that $p\wedge q = (\alpha_{1,n+1},r(A,\alpha_{n+1})\cap B,\alpha_{1,n+1})\in\xi$. This says $r(A,\alpha_{n+1})\cap B$ is nonempty and belongs to $\ftg{F}_{n+1}$. Since $\ftg{F}_{n+1}$ is a filter in $\acfrg{\alpha_{1,n+1}}$ and $r(A,\alpha_{n+1})\in\acfrg{\alpha_{1,n+1}}$, we have that $r(A,\alpha_{n+1})\in\ftg{F}_{n+1}$ and, therefore, $A\in\ftg{G}$. On the other hand, let $A\in\ftg{G}$ and observe that $p=(\alpha_{1,n+1},r(A,\alpha_{n+1}),\alpha_{1,n+1})\in\xi$. Clearly, $p\leq q=(\alpha_{1,n},A,\alpha_{1,n})$. Again using that $\xi$ is a filter, we obtain $q\in\xi$, which says $A\in\ftg{F}_n$.
\end{proof}

\begin{proposition}
	\label{prop:classification.of.filters.of.infinite.type}
	There is a bijective correspondence between filters in $E(S)$ without largest word and pairs $(\alpha, \{\ftg{F}_n\}_{n\geq0})$, where $\alpha\in\awinf$ and $\{\ftg{F}_n\}_{n\geq0}$ is a complete family for $\alpha$. These correspondences are given by Propositions \ref{prop:filter.from.admissible.family} and \ref{prop:complete.family.from.filter}.
\end{proposition}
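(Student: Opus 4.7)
The plan is to verify that the maps provided by Propositions \ref{prop:filter.from.admissible.family} and \ref{prop:complete.family.from.filter} are mutually inverse between the set of filters in $E(S)$ without largest word and the set of pairs $(\alpha,\{\ftg{F}_n\}_{n\geq 0})$ with $\alpha\in\awinf$ and $\{\ftg{F}_n\}$ complete for $\alpha$. First I would confirm that the assignment $(\alpha,\{\ftg{F}_n\})\mapsto\xi$ indeed lands in filters \emph{without} a largest word: since each $\ftg{F}_{n+1}$ is a filter, hence nonempty, picking any $B\in\ftg{F}_{n+1}$ produces $(\alpha_{1,n+1},B,\alpha_{1,n+1})\in\xi$, so $\xi$ contains elements with arbitrarily long words. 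Well-definedness of the companion map is already contained in Proposition \ref{prop:complete.family.from.filter}.

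For the composition that starts with a filter $\xi$ without largest word, extracts $(\alpha,\{\ftg{F}_n\})$, and rebuilds a filter $\xi'$, the inclusion $\xi'\subseteq\xi$ is immediate: each generator $(\alpha_{1,n},A,\alpha_{1,n})$ of $\xi'$ satisfies $A\in\ftg{F}_n$, hence lies in $\xi$ by the defining property of $\ftg{F}_n$, and $\xi$ is upward closed. The reverse inclusion is precisely the content of Proposition \ref{prop:complete.family.from.filter}, which asserts that each $p\in\xi$ has the form $(\alpha_{1,n},A,\alpha_{1,n})$ with $A\in\ftg{F}_n$, placing $p$ in $\xi'$.

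For the composition that starts with a pair $(\alpha,\{\ftg{F}_n\})$, forms $\xi$, and then extracts a new pair $(\beta,\{\ftg{G}_n\})$, I would first argue $\beta=\alpha$ by noting that every $p=(\gamma,C,\gamma)\in\xi$ has $\gamma$ a beginning of some $\alpha_{1,N}$; since $\beta_n$ is defined as the $n$-th letter of the word of any element of $\xi$ of word-length at least $n$, this gives $\beta_n=\alpha_n$. The inclusion $\ftg{F}_n\subseteq\ftg{G}_n$ is immediate from the construction of $\xi$; for the reverse, given $A\in\ftg{G}_n$ the definition of $\xi$ supplies $m\geq n$ and $B\in\ftg{F}_m$ with $(\alpha_{1,m},B,\alpha_{1,m})\leq (\alpha_{1,n},A,\alpha_{1,n})$, which by Proposition \ref{prop:order.in.es} gives $B\subseteq r(A,\alpha_{n+1,m})$; since $\ftg{F}_m$ is a filter we conclude $r(A,\alpha_{n+1,m})\in\ftg{F}_m$, and finally Lemma \ref{lemma:equivalence.for.complete.families} combined with completeness of $\{\ftg{F}_n\}$ yields $A\in\ftg{F}_n$.

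The hard part will be this last step, where completeness is genuinely needed: admissibility alone provides only the containment $\ftg{F}_n\subseteq\{A\in\acfrg{\alpha_{1,n}}\,:\,r(A,\alpha_{n+1,m})\in\ftg{F}_m\}$, and without the reverse containment supplied by completeness (through Lemma \ref{lemma:equivalence.for.complete.families}) there would be no way to translate membership in $\ftg{F}_m$ back into membership in $\ftg{F}_n$, and the two constructions would fail to be mutually inverse.
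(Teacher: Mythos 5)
Your proposal is correct and follows essentially the same route as the paper's proof: both directions of the composition are verified exactly as in the paper, with the reverse inclusion $\ftg{G}_n\subseteq\ftg{F}_n$ handled via Proposition \ref{prop:order.in.es}, the filter property of $\ftg{F}_m$, and Lemma \ref{lemma:equivalence.for.complete.families} together with completeness. The extra check that the constructed filter has no largest word is a welcome (if routine) addition that the paper leaves implicit.
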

\begin{proof}
	Let $\xi$ be a filter in $E(S)$ without a largest word, consider $(\alpha, \{\ftg{F}_n\}_{n\geq0})$ as in Proposition \ref{prop:complete.family.from.filter} and let $\eta$ be the filter given by Proposition \ref{prop:filter.from.admissible.family} from $(\alpha, \{\ftg{F}_n\}_{n\geq0})$; we will show that $\xi=\eta$: clearly $\xi\subseteq\eta$ from the definition of $\eta$. Let $p\in\eta$ and choose $n\geq0$ and $A\in\ftg{F}_n$ such that $(\alpha_{1,n}, A,\alpha_{1,n})\leq p$. By the definition of $\ftg{F}_n$ we have $(\alpha_{1,n}, A,\alpha_{1,n})\in\xi$ and, since $\xi$ is a filter, it follows that $p\in\xi$ and the equality is established.
	
	Conversely, let $\alpha\in\awinf$ and $\{\ftg{F}_n\}_{n\geq0}$ be a complete family for $\alpha$, consider the filter $\xi$ constructed from them as in Proposition \ref{prop:filter.from.admissible.family} and let $(\beta, \{\ftg{G}_n\}_{n\geq0})$ be given from $\xi$ by Proposition \ref{prop:complete.family.from.filter}. We need to show that $(\alpha, \{\ftg{F}_n\}_{n\geq0})=(\beta, \{\ftg{G}_n\}_{n\geq0})$. By construction, we see that $\alpha=\beta$. Fix $m\geq0$ and let $A\in\ftg{F}_m$. Then $(\alpha_{1,m}, A,\alpha_{1,m})\in\xi$, hence $A\in\ftg{G}_m$. 
	
	On the other hand, given $B\in\ftg{G}_m$ the definition of $\ftg{G}_m$ ensures that $(\alpha_{1,m}, B,\alpha_{1,m})\in\xi$; therefore, there exists $k\geq m$ and $C\in\ftg{F}_k$ such that $(\alpha_{1,k}, C,\alpha_{1,k})\leq(\alpha_{1,m}, B,\alpha_{1,m})$. This says that $C\subseteq r(B,\alpha_{m+1,k})$ and, since $\ftg{F}_k$ is a filter in $\acfrg{\alpha_{1,k}}$, it follows that $r(B,\alpha_{m+1,k})\in\ftg{F}_k$. Completeness of $\{\ftg{F}_n\}_{n\geq0}$ and Lemma \ref{lemma:equivalence.for.complete.families} now give $B\in\ftg{F}_m$. Thus, $(\alpha, \{\ftg{F}_n\}_{n\geq0})=(\beta, \{\ftg{G}_n\}_{n\geq0})$ and the proof is complete.
\end{proof}

The previous results classify the filters in $E(S)$ in two types: the ones as in Propositions \ref{prop:filter.from.finite.word.and.filter} and \ref{prop:finite.word.and.filter.from.filter}, which we call \emph{filters of finite type}; and the filters as in Propositions \ref{prop:filter.from.admissible.family}, \ref{prop:complete.family.from.filter} and \ref{prop:classification.of.filters.of.infinite.type}, called \emph{filters of infinite type}.

Next, we present some properties about filters in $E(S)$ that will be useful later; furthermore, we introduce a new notation for the filters.

\begin{proposition}
	\label{prop:properties.about.filters}
	Let $\alpha,\beta\in\awstar$ be such that $\alpha\beta\in\awstar$, $\ft$ be a filter in $\acfra$ and $\ftg{G}$ be a filter in $\acfrg{\alpha\beta}$. The following are equivalent:
	\begin{enumerate}[(i)]
		\item $\{r(A,\beta) \ | \ A\in\ft\}\subseteq\ftg{G}$;
		\item $\ft\subseteq \{A\in\acfra \ | \ r(A,\beta)\in\ftg{G}\}$;
		\item $\displaystyle\bigcup_{A\in\ft}\uset{(\alpha,A,\alpha)}\subseteq \bigcup_{B\in\ftg{G}}\uset{(\alpha\beta,B,\alpha\beta)}$.
	\end{enumerate}
\end{proposition}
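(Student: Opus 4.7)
The plan is to prove the equivalences by a round-robin argument, noting first that (i) and (ii) are essentially tautological reformulations of each other once the set-builder notation in (ii) is unfolded. Indeed, (ii) says that every $A\in\ft$ belongs to $\{A\in\acfra\mid r(A,\beta)\in\ftg{G}\}$, i.e.\ that $r(A,\beta)\in\ftg{G}$ for every $A\in\ft$, and this is precisely (i). Before starting, I would observe that these statements are well-formed: for $A\in\acfra$, the identity $r(r(A,\alpha'),\beta)=r(A,\alpha'\beta)$ together with closure of $\acf$ under relative ranges gives $r(A,\beta)\in\acfrg{\alpha\beta}$, so the membership in $\ftg{G}$ makes sense.

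For (i)$\Rightarrow$(iii), I would take an arbitrary $p$ in the left-hand union, fix $A\in\ft$ with $(\alpha,A,\alpha)\leq p$, and then use the key comparison
\[
(\alpha\beta,r(A,\beta),\alpha\beta)\leq(\alpha,A,\alpha),
\]
which follows from Proposition \ref{prop:order.in.es} (with the decomposition $\alpha\beta=\alpha\cdot\beta$ and $r(A,\beta)\subseteq r(A,\beta)$). Since (i) gives $r(A,\beta)\in\ftg{G}$, the element $(\alpha\beta,r(A,\beta),\alpha\beta)$ witnesses that $p$ belongs to the right-hand union, by transitivity of $\leq$.

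For (iii)$\Rightarrow$(i), I would pick any $A\in\ft$ and observe that $(\alpha,A,\alpha)$ lies trivially in the left-hand union. Hypothesis (iii) then supplies some $B\in\ftg{G}$ with $(\alpha\beta,B,\alpha\beta)\leq(\alpha,A,\alpha)$; again by Proposition \ref{prop:order.in.es}, this forces $B\subseteq r(A,\beta)$. Since $r(A,\beta)\in\acfrg{\alpha\beta}$ and $\ftg{G}$ is upward closed in $\acfrg{\alpha\beta}$, I conclude $r(A,\beta)\in\ftg{G}$, which is exactly (i).

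There is no serious obstacle here; the entire argument rests on the description of the order in $E(S)$ from Proposition \ref{prop:order.in.es} and on the compatibility identity for iterated relative ranges. The only thing to watch out for is making sure at each step that the sets being placed in $\ftg{G}$ actually live in the ambient semilattice $\acfrg{\alpha\beta}$, so that upward closure of $\ftg{G}$ can legitimately be invoked; this is handled by the closure properties of $\acf$ recorded just after Definition \ref{def:labelled.space}.
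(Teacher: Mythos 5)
Your proof is correct and follows essentially the same route as the paper: the (i)$\Leftrightarrow$(ii) equivalence is noted to be a tautology, and the two remaining implications use exactly the paper's key observations, namely $(\alpha\beta,r(A,\beta),\alpha\beta)\leq(\alpha,A,\alpha)$ for one direction and the order characterization $B\subseteq r(A,\beta)$ plus upward closure of $\ftg{G}$ for the other. The only difference is that you route the cycle through (i) where the paper routes it through (ii), which is immaterial.
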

\begin{proof}
	The equivalence between (i) and (ii) is obvious. Suppose (iii) is valid and let $A\in\ft$; by (iii), there exists $B\in\ftg{G}$ such that $(\alpha\beta, B, \alpha\beta)\leq(\alpha,A,\alpha)$, hence $B\subseteq r(A,\beta)$. Since $\ftg{G}$ is a filter in $\acfrg{\alpha\beta}$ and $r(A,\beta)\in\acfrg{\alpha\beta}$, therefore $r(A,\beta)\in\ftg{G}$, proving (ii).
	
	Now assume (ii), let $p\in\bigcup_{A\in\ft}\uset{(\alpha,A,\alpha)}$ and choose $A\in\ft$ such that $(\alpha,A,\alpha)\leq p$. We have $(\alpha\beta,r(A,\beta),\alpha\beta)\leq(\alpha,A,\alpha)\leq p$ and, by (ii), $r(A,\beta)\in\ftg{G}$, so that $p\in\bigcup_{B\in\ftg{G}}\uset{(\alpha\beta,B,\alpha\beta)}$ which gives (iii).
\end{proof}

\begin{proposition}
	\label{prop:completion.of.admissible.families}
	Let $\alpha\in\awstar$ and $\{\ftg{F}_n\}_{n\geq0}$ be  an admissible family for $\alpha$. For each $n\geq0$, define
	\[\overline{\ft}_n=\{A\in\acfrg{\alpha_{1,n}} \ | \ r(A,\alpha_{n+1,m})\in\ftg{F}_m \ \mbox{for some} \ m\geq n\}.\]
	Then:
	\begin{enumerate}[(i)]
		\item $\ftg{F}_n\subseteq \overline{\ft}_n$, for all $n\geq0$;
		\item $\{\overline{\ft}_n\}_{n\geq0}$ is a complete family for $\alpha$;
		\item $(\alpha, \{\ftg{F}_n\}_{n\geq0})$ and $(\alpha, \{\overline{\ft}_n\}_{n\geq0})$ generate the same filter in $E(S)$ (see Proposition \ref{prop:filter.from.admissible.family}).
		\item If $\xi$ is the filter associated with $(\alpha, \{\ftg{F}_n\}_{n\geq0})$, then the pair associated with $\xi$ (by Proposition \ref{prop:complete.family.from.filter}) is $(\alpha, \{\overline{\ft}_n\}_{n\geq0})$.
	\end{enumerate}	
\end{proposition}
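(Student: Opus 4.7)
My plan is to prove parts (i)--(iv) in order, building each on the previous, with the bulk of the real work concentrated in verifying that $\overline{\ft}_n$ is a filter and that the family $\{\overline{\ft}_n\}_{n\geq 0}$ is complete. Before anything else, I would note the useful consequence of admissibility obtained by iteration: if $r(A,\alpha_{n+1,m})\in\ftg{F}_m$ then applying the condition $\ftg{F}_m\subseteq\{C : r(C,\alpha_{m+1})\in\ftg{F}_{m+1}\}$ repeatedly, together with the relation $r(r(A,\beta),\gamma)=r(A,\beta\gamma)$, yields $r(A,\alpha_{n+1,m'})\in\ftg{F}_{m'}$ for every $m'\geq m$. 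Thus the ``some $m\geq n$'' in the definition of $\overline{\ft}_n$ can equivalently be read as ``all sufficiently large $m$''.

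Part (i) is immediate by taking $m=n$ so that $r(A,\alpha_{n+1,n})=r(A,\eword)=A$. To verify that $\overline{\ft}_n$ is a filter in $\acfrg{\alpha_{1,n}}$ (or empty when $n=0$ and $\ftg{F}_0=\emptyset$), nonemptiness comes from (i), upward-closure follows because $A\subseteq B$ forces $r(A,\alpha_{n+1,m})\subseteq r(B,\alpha_{n+1,m})$ and $\ftg{F}_m$ is upward closed, and $\emptyset\notin\overline{\ft}_n$ since $r(\emptyset,\alpha_{n+1,m})=\emptyset$. Closure under meets is where the weakly left resolving hypothesis enters: given $A,B\in\overline{\ft}_n$ with witnesses $m_A,m_B$, the preliminary observation lets me push both witnesses up to a common $m\geq\max(m_A,m_B)$, and then $r(A\cap B,\alpha_{n+1,m})=r(A,\alpha_{n+1,m})\cap r(B,\alpha_{n+1,m})\in\ftg{F}_m$ because $\ftg{F}_m$ is a filter.

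For (ii), I must show $\overline{\ft}_n=\{A\in\acfrg{\alpha_{1,n}}:r(A,\alpha_{n+1})\in\overline{\ft}_{n+1}\}$. If $A\in\overline{\ft}_n$ with witness $m\geq n$, and if $m=n$ then $A\in\ftg{F}_n$ and admissibility plus (i) at level $n+1$ gives $r(A,\alpha_{n+1})\in\ftg{F}_{n+1}\subseteq\overline{\ft}_{n+1}$; if $m>n$, then $r(r(A,\alpha_{n+1}),\alpha_{n+2,m})=r(A,\alpha_{n+1,m})\in\ftg{F}_m$, witnessing $r(A,\alpha_{n+1})\in\overline{\ft}_{n+1}$. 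Conversely, if $r(A,\alpha_{n+1})\in\overline{\ft}_{n+1}$ with witness $m\geq n+1$, the same identity shows $r(A,\alpha_{n+1,m})\in\ftg{F}_m$, so $A\in\overline{\ft}_n$. For (iii), write $\xi$ and $\overline{\xi}$ for the two filters; the inclusion $\xi\subseteq\overline{\xi}$ follows from (i), and for the reverse inclusion, given $p\in\overline{\xi}$ with $(\alpha_{1,n},A,\alpha_{1,n})\leq p$ for some $A\in\overline{\ft}_n$, extract a witness $m\geq n$ so that $r(A,\alpha_{n+1,m})\in\ftg{F}_m$ and observe that $(\alpha_{1,m},r(A,\alpha_{n+1,m}),\alpha_{1,m})\leq(\alpha_{1,n},A,\alpha_{1,n})\leq p$ lies in $\xi$. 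Finally, (iv) is essentially an appeal to the bijection in Proposition \ref{prop:classification.of.filters.of.infinite.type}: since $\{\overline{\ft}_n\}_{n\geq 0}$ is a complete family for $\alpha$ by (ii) and generates $\xi$ by (iii), it must coincide with the complete family attached to $\xi$ in Proposition \ref{prop:complete.family.from.filter}.

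The main technical hurdle is the meet-closure of $\overline{\ft}_n$, because the witnesses for two elements may come from different levels; the trick of pushing witnesses upward via iterated admissibility, combined with the weakly left resolving hypothesis to commute $r(-,\gamma)$ with intersection, is what makes everything work. Once that is in hand, (ii)--(iv) are essentially formal.
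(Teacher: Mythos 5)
Your proof is correct and follows essentially the same route as the paper's: verify the filter axioms for $\overline{\ft}_n$ by pushing witnesses to a common level via iterated admissibility and using the weakly left resolving hypothesis for meets, then check completeness by the case split $m=n$ versus $m>n$, and deduce (iii) and (iv) as in the paper. The only minor imprecision is your parenthetical suggesting $\overline{\ft}_0$ is empty exactly when $\ftg{F}_0$ is (it may be a nonempty filter even if $\ftg{F}_0=\emptyset$), but this does not affect the argument.
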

\begin{proof}
	Item (i) is immediate from the definition of $\overline{\ft}_n$. We claim $\overline{\ft}_n$ is a filter in $\acfrg{\alpha_{1,n}}$ (or possibly the empty set when $n=0$): clearly, $\emptyset\notin\overline{\ft}_n$. Let $A_1$ and $A_2$ be in $\overline{\ft}_n$ and choose $m_1,m_2\geq n$ such that $r(A_1,\alpha_{n+1,m_1})\in\ftg{F}_{m_1}$ and $r(A_2,\alpha_{n+1,m_2})\in\ftg{F}_{m_2}$. Define $m=\max\{m_1,m_2\}$ and note that $r(A_1,\alpha_{n+1,m})$ and $r(A_2,\alpha_{n+1,m})$ both belong to $\ftg{F}_m$, given that $\{\ftg{F}_n\}_{n\geq0}$ is admissible. Because $\ftg{F}_m$ is a filter in $\acfrg{\alpha_{1,m}}$ and the labelled space is weakly left resolving, we have that
	\[r(A_1\cap A_2,\alpha_{n+1,m})=r(A_1,\alpha_{n+1,m})\cap r(A_2,\alpha_{n+1,m})\in\ftg{F}_m\]
	and therefore $A_1\cap A_2\in\overline{\ft}_n$. Now, let $A\in\overline{\ft}_n$ and $B\in\acfrg{\alpha_{1,n}}$ be such that $B\supseteq A$. Choose $m\geq n$ such that $r(A,\alpha_{n+1,m})\in\ftg{F}_m$ and observe that $r(B,\alpha_{n+1,m})\supseteq r(A,\alpha_{n+1,m})$. Since $\ftg{F}_m$ is a filter, we have that $r(B,\alpha_{n+1,m})\in\ftg{F}_m$, which says $B\in\overline{\ft}_n$. This completes the proof of our claim.
	
	Next we show that $\{\overline{\ft}_n\}_{n\geq0}$ is complete: Let
	\[\ftg{G}_n=\{A\in\acfrg{\alpha_{1,n}} \ | \ r(A,\alpha_{n+1})\in\overline{\ft}_{n+1}\}.\]
	We need to show that $\overline{\ft}_n=\ftg{G}_n$. To see this, let $A\in\overline{\ft}_n$ and choose $m\geq n$ such that $r(A,\alpha_{n+1,m})\in\ftg{F}_m$. If $m=n$ then $A\in\ftg{F}_n$ and, since $\{\ftg{F}_n\}_{n\geq0}$ is admissible, we have $r(A,\alpha_{n+1})\in\ftg{F}_{n+1}\subseteq \overline{\ft}_{n+1}$, which says $A\in\ftg{G}_n$. If $m>n$, then $r(r(A,\alpha_{n+1}),\alpha_{n+2,m})=r(A,\alpha_{n+1,m})\in\ftg{F}_m$ and therefore $r(A,\alpha_{n+1})\in\overline{\ft}_{n+1}$, whence $A\in\ftg{G}_n$. On the other hand, let $B\in\ftg{G}_n$. Then $r(B,\alpha_{n+1})\in\overline{\ft}_{n+1}$ and we can take $m\geq n+1$ such that $r(B,\alpha_{n+1,m})=r(r(B,\alpha_{n+1}),\alpha_{n+2,m})\in\ftg{F}_m$. It follows that $B\in\overline{\ft}_n$ and the proof of (ii) is complete.
	
	In order to show (iii), denote by $\xi$ and $\bar{\xi}$ the filters associated with $(\alpha, \{\ftg{F}_n\}_{n\geq0})$ and $(\alpha, \{\overline{\ft}_n\}_{n\geq0})$, respectively. Clearly, $\xi\subseteq\bar{\xi}$; let $p\in\bar{\xi}$ and choose $n\geq0$ and $A\in\overline{\ft}_n$ such that $(\alpha_{1,n},A,\alpha_{1,n})\leq p$. Take $m\geq n$ such that $r(A,\alpha_{n+1,m})\in\ftg{F}_m$ and observe that $q=(\alpha_{1,m},r(A,\alpha_{n+1,m}),\alpha_{1,m})\in\xi$ and $q\leq p$. $\xi$ is a filter, therefore $p\in\xi$, which shows (iii).
	
	Item (iv) is consequence from (iii) and Proposition \ref{prop:classification.of.filters.of.infinite.type}.
\end{proof}

\begin{proposition}
	\label{prop:properties.about.filters.of.finite.type}
	Let $\xi$ be a filter in $E(S)$ of finite type and $(\alpha,\ft)$ be its associated pair. For each $n\in\{0,1,\ldots,|\alpha|\}$, define
	\[\ftg{F}_{n}= \{A\in\acf \ | \ (\alpha_{1,n},A,\alpha_{1,n})\in\xi\}.\]
	Then:
	\begin{enumerate}[(i)]
		\item $\ftg{F}_{n}$ is a filter in $\acfrg{\alpha_{1,n}}$ for every $n\in\{1,\ldots,|\alpha|\}$ and $\ftg{F}_0$ is either empty or a filter in $\acf$;
		\item $\ftg{F}_n = \{A\in \acfrg{\alpha_{1,n}} \ | \ r(A,\alpha_{n+1,m})\in\ftg{F}_{m}\}$, for all $0\leq n<m\leq|\alpha|$.
	\end{enumerate}
\end{proposition}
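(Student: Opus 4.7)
My plan is to follow closely the pattern of Propositions \ref{prop:finite.word.and.filter.from.filter} and \ref{prop:complete.family.from.filter}, leveraging Remark \ref{rmk:order.in.es} (the restricted order on elements with a fixed middle entry coincides with inclusion in $\acfrg{\alpha_{1,n}}$) and the explicit form of meets in $E(S)$ coming from Proposition \ref{prop:order.in.es}. No new ideas should be needed; the main thing to keep straight is the bookkeeping between the cases $n\geq1$ and $n=0$ (where $\ftg{F}_0$ may be empty).

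For part (i), I would first observe that for each fixed $n$, if two elements $(\alpha_{1,n},A,\alpha_{1,n})$ and $(\alpha_{1,n},B,\alpha_{1,n})$ lie in $\xi$, their meet in $E(S)$ is $(\alpha_{1,n},A\cap B,\alpha_{1,n})$, and the order among such elements is exactly $A\subseteq B$. Since $\xi$ is a filter (and using Remark \ref{remark:filter.in.semilattice}), this immediately shows that whenever $\ftg{F}_n$ is nonempty it is a filter in $\acfrg{\alpha_{1,n}}$. To establish nonemptyness when $n\geq1$, I would use that $\xi$ has largest word $\alpha$, so there exists $C\in\ft$ with $(\alpha,C,\alpha)\in\xi$; since $C\subseteq r(\alpha)=r(r(\alpha_{1,n}),\alpha_{n+1,|\alpha|})$, Proposition \ref{prop:order.in.es} gives $(\alpha,C,\alpha)\leq(\alpha_{1,n},r(\alpha_{1,n}),\alpha_{1,n})$, and upward closure of $\xi$ yields $r(\alpha_{1,n})\in\ftg{F}_n$.

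For part (ii), fix $0\leq n<m\leq |\alpha|$. For the inclusion $\supseteq$, if $A\in\acfrg{\alpha_{1,n}}$ satisfies $r(A,\alpha_{n+1,m})\in\ftg{F}_m$, then $(\alpha_{1,m},r(A,\alpha_{n+1,m}),\alpha_{1,m})\in\xi$, and by Proposition \ref{prop:order.in.es} this element is below $(\alpha_{1,n},A,\alpha_{1,n})$, so upward closure gives $A\in\ftg{F}_n$. For $\subseteq$, let $A\in\ftg{F}_n$; since $m\geq 1$, part (i) provides some $B\in\ftg{F}_m$, and computing the meet of $(\alpha_{1,n},A,\alpha_{1,n})$ and $(\alpha_{1,m},B,\alpha_{1,m})$ in $\xi$ yields $(\alpha_{1,m},r(A,\alpha_{n+1,m})\cap B,\alpha_{1,m})\in\xi$; hence $r(A,\alpha_{n+1,m})\cap B\in\ftg{F}_m$, and since $\ftg{F}_m$ is a filter we conclude $r(A,\alpha_{n+1,m})\in\ftg{F}_m$.

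Finally, I would address the degenerate case $n=0$ with $\ftg{F}_0=\emptyset$: here both sides of the equality in (ii) must be shown to be empty. The right-hand side being empty is the contrapositive of the $\supseteq$ argument above: if some $A\in\acf$ satisfied $r(A,\alpha_{1,m})\in\ftg{F}_m$, the same upward-closure step would force $(\eword,A,\eword)\in\xi$ and thus $A\in\ftg{F}_0$, a contradiction. I do not anticipate any real obstacle; the entire proof amounts to unpacking the order and product formulas and invoking the filter axioms on $\xi$.
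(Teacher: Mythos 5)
Your proof is correct and takes essentially the same route as the paper, which simply observes that the statement is proved exactly as in Proposition \ref{prop:complete.family.from.filter} and Lemma \ref{lemma:equivalence.for.complete.families}; your write-up just spells out that analogous argument (proving the general $n<m$ identity directly rather than via the consecutive-index case plus induction, a harmless streamlining).
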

\begin{proof}
	These results are proved exactly in the same way as Proposition \ref{prop:complete.family.from.filter} and Lemma \ref{lemma:equivalence.for.complete.families}.
\end{proof}

We summarize the results obtained until now. There are two kinds of filters in $E(S)$: of infinite and finite type. The former is characterized by an infinite word $\alpha$ and a complete family $\{\ftg{F}_n\}_{n\geq0}$ for $\alpha$. Clearly, for $n\leq m$, $\ftg{F}_n$ is completely determined from $\ftg{F}_m$. When we need to create a filter of infinite type, we need only an infinite word $\alpha$ and an admissible family for $\alpha$. Indeed, an admissible family can be completed to a complete family by using Proposition \ref{prop:completion.of.admissible.families}. Sometimes, we use Proposition \ref{prop:properties.about.filters} to obtain equivalent formulations for the admissibility condition in Definition \ref{def:admissible.and.complete.families}.

The filters of finite type are determined by a finite word $\alpha$ and a filter $\ft$ in $\acfra$. Aiming to obtain a standard way to work with all filters, we now describe filters of finite type in a slightly different form: analogous to Definition \ref{def:admissible.and.complete.families}, to each finite word $\alpha$ we can define admissible or complete families $\{\ftg{F}_n\}_{0\leq n\leq|\alpha|}$ for $\alpha$ (when $\alpha$ is the empty word $\eword$, we do not allow $\ftg{F}_0=\emptyset$). Due to Proposition \ref{prop:completion.of.admissible.families}, admissible families can be completed and, since the filter $\ftg{F}_{|\alpha|}$ determines the family (Proposition \ref{prop:properties.about.filters.of.finite.type}), then it is clear that there is a correspondence between filters of finite type and pairs $(\alpha, \{\ftg{F}_n\}_{0\leq n\leq|\alpha|})$, where $\alpha$ is a finite word and $\{\ftg{F}_n\}_{0\leq n\leq|\alpha|}$ is a complete family for $\alpha$.

Combining the results obtained in this section, we obtain the following description for the filters in $E(S)$.

\begin{theorem}
	Let $\lspace$ be a labelled space which is weakly left resolving, and let $S$ be its associated inverse semigroup. Then there is a bijective correspondence between filters in $E(S)$ and pairs $(\alpha, \{\ftg{F}_n\}_{0\leq n\leq|\alpha|})$, where $\alpha\in\awleinf$ and $\{\ftg{F}_n\}_{0\leq n\leq|\alpha|}$ is a complete family for $\alpha$ (understanding that ${0\leq n\leq|\alpha|}$ means $0\leq n<\infty$ when $\alpha\in\awinf$).
\end{theorem}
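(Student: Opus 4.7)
The plan is to split into the two mutually exclusive cases already identified in the section: filters with a largest word (finite type) and filters without one (infinite type). Every filter in $E(S)$ falls into exactly one of these cases, by definition.

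First I would handle the infinite type. Proposition \ref{prop:classification.of.filters.of.infinite.type} already delivers exactly the bijective correspondence we want on that side: filters of infinite type correspond bijectively to pairs $(\alpha, \{\ftg{F}_n\}_{n \geq 0})$ with $\alpha \in \awinf$ and $\{\ftg{F}_n\}_{n \geq 0}$ a complete family for $\alpha$. Under the convention that $0 \leq n \leq |\alpha|$ means $0 \leq n < \infty$ when $\alpha \in \awinf$, this is precisely the infinite-type half of the theorem.

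Next I would handle the finite type. Propositions \ref{prop:filter.from.finite.word.and.filter} and \ref{prop:finite.word.and.filter.from.filter} give a bijection between filters of finite type and pairs $(\alpha, \ft)$, where $\alpha \in \awstar$ and $\ft$ is a filter in $\acfra$. The task is to repackage $\ft$ as a complete family $\{\ftg{F}_n\}_{0 \leq n \leq |\alpha|}$. Given such a filter $\ft$, I would set $\ftg{F}_{|\alpha|} = \ft$ and define $\ftg{F}_n = \{A \in \acfrg{\alpha_{1,n}} \mid r(A, \alpha_{n+1,|\alpha|}) \in \ft\}$ for $0 \leq n < |\alpha|$, and then invoke Proposition \ref{prop:properties.about.filters.of.finite.type}: starting from the filter built via Proposition \ref{prop:filter.from.finite.word.and.filter}, the family defined by $\ftg{F}_n = \{A \in \acf \mid (\alpha_{1,n},A,\alpha_{1,n}) \in \xi\}$ satisfies exactly the completeness identity in item (ii) of that proposition for all $0 \leq n < m \leq |\alpha|$, which is the finite analogue of Lemma \ref{lemma:equivalence.for.complete.families}. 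Conversely, given a complete family $\{\ftg{F}_n\}_{0 \leq n \leq |\alpha|}$, the filter $\ftg{F}_{|\alpha|}$ recovers a filter in $\acfra$, and completeness ensures that the remaining $\ftg{F}_n$ are determined by it, so the correspondence is a bijection.

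Finally I would paste the two bijections together: every filter in $E(S)$ is of exactly one type, and the description of the corresponding pair $(\alpha, \{\ftg{F}_n\}_{0 \leq n \leq |\alpha|})$ is uniform (with $|\alpha|$ finite or infinite), noting that in the finite case with $\alpha = \eword$ we require $\ftg{F}_0 \neq \emptyset$ (since a filter must be nonempty), whereas in the infinite case $\ftg{F}_0 = \emptyset$ is permitted. The only mildly delicate point, which I expect to be the main nuisance rather than a real obstacle, is verifying that the admissible-vs-complete dichotomy for finite words behaves as in Proposition \ref{prop:completion.of.admissible.families}, so that the finite-type reformulation in terms of complete families truly matches the $(\alpha, \ft)$ parametrization of Propositions \ref{prop:filter.from.finite.word.and.filter} and \ref{prop:finite.word.and.filter.from.filter} with no redundancy and no omission.
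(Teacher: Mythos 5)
Your proposal is correct and follows essentially the same route as the paper: the paper also obtains the theorem by combining Proposition \ref{prop:classification.of.filters.of.infinite.type} for the infinite-type case with the $(\alpha,\ft)$ parametrization of Propositions \ref{prop:filter.from.finite.word.and.filter} and \ref{prop:finite.word.and.filter.from.filter}, repackaged as a complete family via Proposition \ref{prop:properties.about.filters.of.finite.type} (with the same convention excluding $\ftg{F}_0=\emptyset$ only when $\alpha=\eword$).
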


In view of this, we will use the following notation: when a filter $\xi$ in $E(S)$ with associated word $\alpha\in\awleinf$ is given, we write $\xi=\xia$ to stress the word $\alpha$. Furthermore, we denote by $\xi^{\alpha}_n$ (or $\xi_n$ when there is no ambiguity) the filter $\ftg{F}_n$ in $\acfrg{\alpha_{1,n}}$ of the complete family associated with $\xia$ (obviously, $n\leq|\alpha|$). To be specific:
\[\xi^{\alpha}_n=\{A\in\acf \ | \ (\alpha_{1,n},A,\alpha_{1,n}) \in \xia\}\]
and the family $\{\xi^{\alpha}_n\}_{0\leq n\leq|\alpha|}$ satisfies
\[\xi^{\alpha}_n = \{A\in \acfrg{\alpha_{1,n}} \ | \ r(A,\alpha_{n+1,m})\in\xi^{\alpha}_{m}\}\]
for all $0\leq n<m\leq|\alpha|$.

Finally we remark that, in order to create a filter of finite type, we need only a word $\alpha\in\awstar$ and a filter in $\acfra$, whereas to create a filter of infinite type, it is enough to have a word $\alpha\in\awinf$ and an admissible family for $\alpha$.

For future reference, we denote by $\filt$ the set of all filters in $E(S)$ and by $\filtw{\alpha}$ the subset of $\filt$ of those filters whose associated word is $\alpha\in\awleinf$.

\section{Ultrafilters in $E(S)$}
\label{section:ultrafilters.in.es}

Next we describe the ultrafilters in $E(S)$. Since the notion of ultrafilter is associated with inclusion among filters, we enunciate an easy, but useful, proposition which characterizes the order (given by inclusion) in $\filt$.

\begin{proposition}
	\label{prop:order.in.f}
	Let $\xia,\eta^{\beta}\in\filt$. Then $\xia\subseteq\eta^{\beta}$ if and only if $\alpha$ is a beginning of (or equal to) $\beta$ and $\xi_n\subseteq\eta_n$ for all $0\leq n\leq|\alpha|$.
\end{proposition}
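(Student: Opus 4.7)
The proof is a direct application of the classification of filters achieved in the preceding theorem together with the definition of $\xi_n$. The plan is to unpack both sides in terms of the characterization $\xi^{\alpha}=\bigcup_{n\leq|\alpha|}\bigcup_{A\in\xi_n}\uset{(\alpha_{1,n},A,\alpha_{1,n})}$, and analogously for $\eta^{\beta}$.

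For the sufficient direction, I would assume $\alpha$ is a beginning of $\beta$ and $\xi_n\subseteq\eta_n$ for $0\leq n\leq|\alpha|$, take an arbitrary $p\in\xi^{\alpha}$, and produce $n\leq|\alpha|$ and $A\in\xi_n$ with $(\alpha_{1,n},A,\alpha_{1,n})\leq p$. Since $\alpha_{1,n}=\beta_{1,n}$ (because $\alpha$ is a beginning of $\beta$ and $n\leq|\alpha|\leq|\beta|$) and $A\in\xi_n\subseteq\eta_n$, the element $(\beta_{1,n},A,\beta_{1,n})$ lies in $\eta^{\beta}$; as $\eta^{\beta}$ is upward-closed, $p\in\eta^{\beta}$. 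This direction involves no subtle point.

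For the necessary direction, I would proceed in two steps. First, I would show that $\alpha$ is a beginning of $\beta$. For each $n$ with $1\leq n\leq|\alpha|$, using that $\xi_n$ is a filter in $\acfrg{\alpha_{1,n}}$ (nonempty) combined with the fact that any $B\in\xi_n$ satisfies $B\subseteq r(\alpha_{1,n})$, the filter property of $\xi^{\alpha}$ gives $(\alpha_{1,n},r(\alpha_{1,n}),\alpha_{1,n})\in\xi^{\alpha}\subseteq\eta^{\beta}$. The key observation — and the main conceptual step — is that any element of $\eta^{\beta}$ of the form $(\gamma,C,\gamma)$ must have $\gamma$ a beginning of $\beta$: by the characterization it dominates some $(\beta_{1,m},B,\beta_{1,m})$, and the description of the order on $E(S)$ in Proposition \ref{prop:order.in.es} forces $\beta_{1,m}=\gamma\gamma'$, so $\gamma=\beta_{1,|\gamma|}$. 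Applied to $\gamma=\alpha_{1,n}$ for every $n\leq|\alpha|$, this yields $\alpha_{1,n}=\beta_{1,n}$ for all such $n$; letting $n$ range (in particular to $\infty$ when $\alpha\in\awinf$) gives that $\alpha$ is a beginning of $\beta$. The case $\alpha=\eword$ is immediate.

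Second, I would check $\xi_n\subseteq\eta_n$ for $0\leq n\leq|\alpha|$. If $A\in\xi_n$, then $(\alpha_{1,n},A,\alpha_{1,n})\in\xi^{\alpha}\subseteq\eta^{\beta}$ by definition of $\xi_n$; using $\alpha_{1,n}=\beta_{1,n}$ from the previous step, the defining description $\eta_n=\{B\in\acf\mid(\beta_{1,n},B,\beta_{1,n})\in\eta^{\beta}\}$ gives $A\in\eta_n$. The only real obstacle is the first step (showing $\alpha$ is a beginning of $\beta$), and that obstacle dissolves once one notices that the word-coordinate of an element of a filter $\eta^{\beta}$ is forced to be a beginning of $\beta$, which is an immediate consequence of the description of the partial order on $E(S)$.
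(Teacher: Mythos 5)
Your argument is correct and is exactly the routine unpacking the paper has in mind — its own proof of this proposition is simply the word ``Trivial.'' Both directions check out: the key observation that the word-coordinate of any element of $\eta^{\beta}$ must be a beginning of $\beta$ follows, as you note, from Proposition \ref{prop:order.in.es} and the description $\eta^{\beta}=\bigcup_{n}\bigcup_{B\in\eta_n}\uset{(\beta_{1,n},B,\beta_{1,n})}$, and the rest is a direct application of the definitions of $\xi_n$ and $\eta_n$.
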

\begin{proof}
	Trivial.
\end{proof}

\begin{remark}
	\label{rmk:order.in.f}
	The proposition above can be simplified in the case $\alpha$ is a finite word: the statement would then be $\xia\subseteq\eta^{\beta}$ if and only if $\alpha$ is a beginning of (or equal to) $\beta$ and $\xi_{|\alpha|}\subseteq\eta_{|\alpha|}$.
\end{remark}

\begin{proposition}
	\label{prop:ultrafilters.of.infinite.type}
	Let $\xia\in\filt$ be a filter of infinite type, that is, $\alpha\in\awinf$. Then $\xia$ is an ultrafilter if and only if $\{\xi_n\}_{n\geq 0}$ is maximal among all complete families for $\alpha$, where maximal means that if $\{\ftg{F}_n\}_{n\geq 0}$ is complete family for $\alpha$ such that $\xi_n\subseteq\ftg{F}_n$ for all $n\geq0$, then $\xi_n=\ftg{F}_n$ for all $n\geq0$.
\end{proposition}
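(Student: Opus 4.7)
The plan is to leverage the bijective correspondence between filters of infinite type and pairs $(\alpha,\{\ftg{F}_n\}_{n\geq 0})$ established by Proposition \ref{prop:classification.of.filters.of.infinite.type}, together with the order characterization in Proposition \ref{prop:order.in.f}. The ultrafilter condition for $\xia$ is the absence of any filter $\eta\in\filt$ with $\xia\subsetneq\eta$, so the argument will reduce this to a statement purely about complete families for $\alpha$.

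The key observation is that if $\xia\subseteq\eta^{\beta}$ for some filter $\eta^{\beta}$, then by Proposition \ref{prop:order.in.f}, $\alpha$ must be a beginning of $\beta$. Since $\alpha\in\awinf$ is already an infinite word and $\beta\in\awleinf$, the only possibility is $\beta=\alpha$. Consequently, any filter containing $\xia$ is itself of infinite type and has the same associated word $\alpha$, and by Proposition \ref{prop:order.in.f} its complete family $\{\eta_n\}_{n\geq 0}$ satisfies $\xi_n\subseteq\eta_n$ for every $n\geq 0$.

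With this in hand, both implications become direct. For the forward direction, assume $\xia$ is an ultrafilter and let $\{\ftg{F}_n\}_{n\geq 0}$ be any complete family for $\alpha$ with $\xi_n\subseteq\ftg{F}_n$ for all $n\geq 0$; the filter $\eta^{\alpha}$ built from $(\alpha,\{\ftg{F}_n\}_{n\geq 0})$ via Proposition \ref{prop:filter.from.admissible.family} then contains $\xia$ by Proposition \ref{prop:order.in.f}, so by maximality of $\xia$ we must have $\eta^{\alpha}=\xia$, and the correspondence from Proposition \ref{prop:classification.of.filters.of.infinite.type} forces $\ftg{F}_n=\xi_n$ for every $n\geq 0$. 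For the converse, suppose $\{\xi_n\}_{n\geq 0}$ is maximal and let $\eta\in\filt$ satisfy $\xia\subseteq\eta$; by the observation above, $\eta=\eta^{\alpha}$ is of infinite type with $\xi_n\subseteq\eta_n$ for all $n$. Maximality of $\{\xi_n\}_{n\geq 0}$ among complete families for $\alpha$ yields $\xi_n=\eta_n$ for all $n\geq 0$, whence $\xia=\eta^{\alpha}$ by the bijective correspondence.

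The only subtlety, and the step that does the real work, is ruling out filters of finite type as potential strict extensions of $\xia$; this is precisely where the infiniteness of $\alpha$ is essential, via Proposition \ref{prop:order.in.f}. There are no delicate calculations or maximality-by-Zorn arguments to invoke, so I expect the proof to be short once the observation about infinite words is stated.
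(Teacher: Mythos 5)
Your proposal is correct and follows essentially the same route as the paper: both directions reduce to Proposition \ref{prop:order.in.f} (which forces $\beta=\alpha$ since $\alpha$ is infinite) combined with the bijective correspondence of Proposition \ref{prop:classification.of.filters.of.infinite.type}. The observation you single out as the key step is exactly the one the paper relies on.
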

\begin{proof}
	First suppose $\xia$ is an ultrafilter and let $\{\ftg{F}_n\}_{n\geq0}$ be a complete family for $\alpha$ such that $\xi_n\subseteq\ftg{F}_n$ for all $n\geq0$. If $\eta$ is the filter associated with $(\alpha, \{\ftg{F}_n\}_{n\geq0})$ then $\xia\subseteq\eta$, by the previous result. Since $\xia$ is an ultrafilter, therefore $\xia=\eta$, hence $\xi_n=\ftg{F}_n$ for all $n\geq0$, that is, $\{\xi_n\}_{n\geq 0}$ is maximal.
	
	On the other hand, suppose $\{\xi_n\}_{n\geq 0}$ is maximal and let $\eta^{\beta}\in\filt$ be such that $\xia\subseteq\eta^{\beta}$. By the previous result we must have $\beta=\alpha$ and $\xi_n\subseteq\eta_n$ for all $n\geq0$; since $\{\xi_n\}_{n\geq 0}$ is maximal, we have that  $\xi_n=\eta_n$ for all $n$ and therefore $\xia=\eta^{\beta}$, showing that $\xia$ is an ultrafilter.
\end{proof}

\begin{proposition}
	\label{prop:ultrafilters.of.finite.type}
	Let $\xia\in\filt$ be a filter of finite type, that is, $\alpha\in\awstar$. Then $\xia$ is an ultrafilter if and only if $\xi_{|\alpha|}$ is an ultrafilter in $\acfra$ and for each letter $b\in\alf$, there exists $A\in\xi_{|\alpha|}$ such that $r(A,b)=\emptyset$.
\end{proposition}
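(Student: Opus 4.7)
The plan is to prove both directions by exploiting the characterization of order between filters (Proposition 5.1), which for finite type filters essentially says $\xi^\alpha \subseteq \eta^\beta$ iff $\alpha$ is a beginning of $\beta$ and the component filter $\xi_{|\alpha|}$ sits inside $\eta_{|\alpha|}$. So an ultrafilter of finite type cannot be enlarged in either of two ways: (a) by enlarging its top-level component, or (b) by extending its associated word.

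For the forward direction, assume $\xi^\alpha$ is an ultrafilter. For the ultrafilter claim about $\xi_{|\alpha|}$, suppose some filter $\mathcal{G}$ in $\mathcal{B}_\alpha$ strictly contains $\xi_{|\alpha|}$; then by Proposition 4.4 the pair $(\alpha,\mathcal{G})$ produces a finite-type filter of $E(S)$ which, by Remark 5.2, strictly contains $\xi^\alpha$, a contradiction. For the letter condition, suppose instead that some letter $b$ satisfies $r(A,b)\neq\emptyset$ for every $A\in\xi_{|\alpha|}$. The idea is to extend $\xi^\alpha$ to a longer word $\alpha b$. Using weakly left resolving one checks that the collection $X=\{r(A,b)\,:\,A\in\xi_{|\alpha|}\}$ is closed under finite intersection (since $r(A\cap B,b)=r(A,b)\cap r(B,b)$ and $A\cap B\in\xi_{|\alpha|}$) and does not contain $\emptyset$, so Proposition 2.8 gives a filter $\mathcal{G}=\uparrow_{\mathcal{B}_{\alpha b}}\!X$ in $\mathcal{B}_{\alpha b}$. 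Applying Proposition 4.4 to $(\alpha b,\mathcal{G})$ yields a finite-type filter $\eta^{\alpha b}$; its $|\alpha|$-th component equals $\{C\in\mathcal{B}_\alpha\,:\,r(C,b)\in\mathcal{G}\}$, which contains $\xi_{|\alpha|}$, so $\eta^{\alpha b}\supseteq\xi^\alpha$ by Proposition 5.1, and the containment is strict because $\eta^{\alpha b}$ contains elements with word $\alpha b$ whereas no element of $\xi^\alpha$ has word longer than $\alpha$. This contradicts maximality.

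For the converse, assume $\xi_{|\alpha|}$ is an ultrafilter in $\mathcal{B}_\alpha$ and the letter condition holds. Let $\eta^\beta$ be any filter with $\xi^\alpha\subseteq\eta^\beta$; by Proposition 5.1, $\alpha$ is a beginning of $\beta$ and $\xi_{|\alpha|}\subseteq\eta_{|\alpha|}$. If $\alpha=\beta$, then $\eta_{|\alpha|}$ is a filter in $\mathcal{B}_\alpha$ containing the ultrafilter $\xi_{|\alpha|}$, forcing equality and hence $\xi^\alpha=\eta^\beta$. If $\alpha$ is a strict beginning of $\beta$, let $b=\beta_{|\alpha|+1}$ and use the letter hypothesis to pick $A\in\xi_{|\alpha|}\subseteq\eta_{|\alpha|}$ with $r(A,b)=\emptyset$. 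Completeness of the family associated with $\eta^\beta$ (Proposition 4.11 or its finite-type analogue 4.16) gives $\eta_{|\alpha|}=\{C\in\mathcal{B}_\alpha\,:\,r(C,b)\in\eta_{|\alpha|+1}\}$, whence $\emptyset=r(A,b)\in\eta_{|\alpha|+1}$, contradicting the fact that $\eta_{|\alpha|+1}$ is a filter.

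The only mildly subtle point is verifying in the forward direction that the enlarging filter $\mathcal{G}$ in $\mathcal{B}_{\alpha b}$ is well-defined; this is precisely where the weakly left resolving hypothesis is used (to ensure $X$ is closed under meets), and where nonemptiness of every $r(A,b)$ is needed so that $\emptyset\notin X$. Everything else is a straightforward application of the order description of $\mathbb{F}$ and of the correspondence between filters of finite type and pairs $(\alpha,\mathcal{F})$ given in Section~4.
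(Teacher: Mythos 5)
Your proposal is correct and follows essentially the same route as the paper's proof: the forward direction enlarges $\xi^\alpha$ either within $\acfra$ or by extending to the word $\alpha b$ via the filter generated by $\{r(A,b)\mid A\in\xi_{|\alpha|}\}$ (using weakly left resolving exactly as the paper does), and the backward direction uses the order description of $\filt$ together with completeness of the family for $\eta^\beta$ to rule out both a strictly larger component filter and a strictly longer word. The only cosmetic difference is that the paper phrases the converse as a contradiction via Proposition \ref{prop:properties.about.filters} while you argue directly from completeness; the content is identical.
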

\begin{proof}
	$(\Rightarrow)$ Suppose $\xia$ is an ultrafilter, and let $\ft$ be a filter in $\acfra$ such that $\xi_{|\alpha|}\subseteq\ft$. The filter $\eta$ associated with $(\alpha,\ft)$ contains $\xia$, therefore $\eta=\xia$ and so $\ft=\xi_{|\alpha|}$, whence $\xi_{|\alpha|}$ is an ultrafilter.
	
	Now, suppose by contradiction there exists $b\in\alf$ such that $r(A,b)\neq\emptyset$ for all $A\in\xi_{|\alpha|}$. Consider the subset of $\acfrg{\alpha b}$ given by
	\[Y=\{r(A,b) \ | \ A\in\xi_{|\alpha|}\}.\]
	
	Given that $\xi_{|\alpha|}$ is a filter and the labelled space is weakly left resolving, we have that $Y$ is closed under intersections and, by hypothesis, $\emptyset\notin Y$; it follows from Proposition \ref{prop:subsets.that.generate.filters} that $\usetr{Y}{\acfrg{\alpha b}}$ is a filter in $\acfrg{\alpha b}$. Consider then the filter $\eta$ associated with the pair $(\alpha b,\usetr{Y}{\acfrg{\alpha b}})$. Since $\eta_{|\alpha|} = \{A\in \acfra \ | \ r(A,b)\in\usetr{Y}{\acfrg{\alpha b}}\}\supseteq \xi_{|\alpha|}$, we would then have $\xia\subsetneqq\eta$ by Remark \ref{rmk:order.in.f}, contradicting the fact that $\xia$ is an ultrafilter.
	
	$(\Leftarrow)$ Suppose $\xia$ is not an ultrafilter, so that there is a $\eta^{\beta}\in\filt$ such that $\xia\subsetneqq\eta^{\beta}$, noting that $\alpha$ is a beginning of $\beta$, by Proposition \ref{prop:order.in.f}. If $\beta=\alpha$ then $\xi_{|\alpha|}\subsetneqq\eta_{|\alpha|}$ (by Remark \ref{rmk:order.in.f}), hence $\xi_{|\alpha|}$ is not an ultrafilter in $\acfra$. If $\beta\neq\alpha$, then $\beta=\alpha\gamma$ with $\gamma\neq\eword$. Let $b$ be the first letter of $\gamma$ and observe that $\xi_{|\alpha|}\subseteq\eta_{|\alpha|}=\{A\in\acfra \ | \ r(A,b)\in\eta_{|\alpha|+1}\}$; it follows from Proposition \ref{prop:properties.about.filters} that $\{r(A,b) \ | \ A\in\xi_{|\alpha|}\}\subseteq\eta_{|\alpha|+1}$ and, because $\eta_{|\alpha|+1}$ is a filter, we have $r(A,b)\neq\emptyset$ for all $A\in\xi_{|\alpha|}$, a contradiction.
\end{proof}

\begin{remark}
	\label{rmk:ultrafilters.of.finite.type}
	The condition ``for each letter $b\in\alf$, there exists $A\in\xi_{|\alpha|}$ such that $r(A,b)=\emptyset$'' in the previous result has other variations; indeed, if $\xia\in\filt$ is a filter of finite type, it is then easy to see that the following conditions are equivalent:
	\begin{enumerate}[(i)]
		\item For each letter $b\in\alf$, there exists $A\in\xi_{|\alpha|}$ such that $r(A,b)=\emptyset$;
		\item For each letter $b\in\alf$ such that $\alpha b\in\awstar$, there exists $A\in\xi_{|\alpha|}$ such that $r(A,b)=\emptyset$;
		\item For each word $\beta\in\awstar$, there exists $A\in\xi_{|\alpha|}$ such that $r(A,\beta)=\emptyset$;
		\item For each word $\beta\in\awstar$ such that $\alpha\beta\in\awstar$, there exists $A\in\xi_{|\alpha|}$ such that $r(A,\beta)=\emptyset$.
	\end{enumerate}
\end{remark}

The previous two results characterize the ultrafilters in $E(S)$, but natural questions related to Propositions \ref{prop:ultrafilters.of.infinite.type} and \ref{prop:ultrafilters.of.finite.type} can be raised; for instance, if $\alpha\in\awinf$ and $\xia$ is a filter, it is clear that if $\xi_n$ is ultrafilter for every $n\geq0$, then $\xia$ is an ultrafilter. The converse, however, is not true in general, as the next example illustrates.

\begin{example}
	\label{example:maximal.family.with.a.nonultrafilter}
	Consider the labelled graph $\lgraph$ given by

	\begin{tikzpicture}[->,>=stealth',shorten >=1pt,auto,thick,main node/.style={circle,draw}]
	\node[main node] (1) {$v_1$};
	\node[main node] (2) [right=1.3cm of 1] {$v_2$};
	\node[main node] (3) [right=1.3cm of 2] {$v_3$};
	\node[main node] (4) [below=1.3cm of 2] {$v_4$};
	\node[main node] (5) [right=1.3cm of 4] {$v_5$}; 
	\node[main node] (6) [right=1.3cm of 5] {$v_6$}; 
	\node[main node] (7) [right=1.3cm of 6] {$v_7$}; 
	\node (8) [right=1cm of 7] {$\cdots$};
	
	\draw[->] (1) to node [above] {${a_1}$} (2);
	\draw[->] (2) to node [above] {${a_2}$} (3);
	\draw[->] (1) to node [below left] {${a_1}$} (4);
	\draw[->] (4) to node [above left] {${a_2}$} (3);
	\draw[->] (4) to node [below] {${a_2}$} (5);
	\draw[->] (5) to node [below] {${a_3}$} (6);
	\draw[->] (6) to node [below] {${a_4}$} (7);
	\draw[->] (7) to node [below] {${a_5}$} (8);
	\end{tikzpicture}
	
	and define $\acf$ as the family of all subsets $A\subseteq\dgraph^0$ such that $v_2\in A$ whenever $v_4\in A$ and $v_3\in A$ whenever $v_5\in A$. It is not difficult to check that $\acf$ is a weakly left resolving accommodating family for $\lgraph$.

	Consider the labelled path $\alpha = a_1a_2a_3\cdots$. We claim there exists a unique complete family $\{\ftg{F}_n\}_{n\geq0}$ for $\alpha$ (which is therefore maximal), given by $\ft_0 = \{\{v_1\}\}$, $\ft_1 = \{\{v_2,v_4\}\}$, $\ft_2=\{\{v_3,v_5\}\}$ and $\ft_n=\{\{v_{n+3}\}\}$ for $n\geq 3$: to prove this, first observe that $\acf_{\alpha_{1,1}}=\{\emptyset,\{v_2\},\{v_2,v_4\}\}$, $\acf_{\alpha_{1,2}}=\{\emptyset,\{v_3\},\{v_3,v_5\}\}$ and $\acf_{\alpha_{1,n}}=\{\emptyset,\{v_{n+3}\}\}$ for $n\geq 3$. It is easily checked that the given family is complete for $\alpha$, and we see that for $n\geq 3$ the only possible filter in $\acf_{\alpha_{1,n}}$ is $\{\{v_{n+3}\}\}$. Since in a complete family each filter determines uniquely the ones that come before it, this establishes the desired uniqueness (for instance, $\ft_2=\{A\in\acf_{\alpha_{1,2}}:r(A,a_3)\in\ft_3\}$ and, since $r(\{v_3\},a_3)=\emptyset$, then $\ft_2=\{\{v_3,v_5\}\}$. Similarly, the only possibility for $\ft_1$ is $\{\{v_2,v_4\}\}$).	
	
	By Proposition \ref{prop:ultrafilters.of.infinite.type}, we obtain an ultrafilter $\xi^\alpha$ on $E(S)$ such that $\xi_n=\ft_n$ for all $n$; however, $\xi_2=\{\{v_3,v_5\}\}$ is not an ultrafilter on $\acf_{\alpha_{1,2}}$ because it is contained in the filter $\{\{v_3\},\{v_3,v_5\}\}$.
\end{example}

On the other hand, if we assume that the family $\acf$ is closed under relative complements, that is, for every $A,B\in\acf$ one has $A\setminus B\in\acf$, then the situation described in the previous example does not occur and, in view of this, we assume from now on that the accommodating family $\acf$ of the labelled space $\lspace$ is closed under relative complements; this ensures in particular that $\acf=\acfrg{\eword}$ is a generalized Boolean algebra, $\acfrg{\alpha}$ is a Boolean algebra for $\alpha\in\awplus$ and $r(A\setminus B, \beta) = r(A,\beta)\setminus r(B,\beta)$ for $\beta\in\awstar$, see Remark \ref{rmk:b.alpha.boolean.algebra} and Lemma \ref{lemma:wlr.closedcomp.relrangeisnice}. We recall this hypothesis was already used by Bates, Pask and Willis in \cite{2011arXiv1106.1484B}, and Bates, Carlsen and Pask in \cite{2012arXiv1203.3072B}.

\begin{proposition}
	\label{prop:family.ultrafilters}
	Suppose the accommodating family $\acf$ is closed under relative complements and let $\xia$ be a filter in $E(S)$. If $\xi_n$ is an ultrafilter for $n$ with $0<n\leq|\alpha|$, then $\xi_m$ is an ultrafilter for every $0<m<n$. If furthermore $\xi_0$ is nonempty, then $\xi_0$ is also an ultrafilter.
\end{proposition}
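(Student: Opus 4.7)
The plan is to split into two cases according to whether $m\geq 1$ or $m=0$, since the former concerns the Boolean algebras $\acfrg{\alpha_{1,m}}$ while the latter concerns the generalized Boolean algebra $\acf=\acfrg{\eword}$, which lacks a top element and therefore requires a different ultrafilter characterization. Throughout I will use the completeness relation $\xi_n = \{A\in\acfrg{\alpha_{1,n}} \mid r(A,\alpha_{n+1,m})\in\xi_m\}$ (Lemma~\ref{lemma:equivalence.for.complete.families}) to transfer information between the levels.

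For $0<m<n$, I would apply the Boolean-algebra ultrafilter criterion (Proposition~\ref{prop:ultrafilter.in.boolean.algebra}) to $\xi_m$ inside $\acfrg{\alpha_{1,m}}$: fix $A\in\acfrg{\alpha_{1,m}}$ and show that exactly one of $A$ and its Boolean complement $r(\alpha_{1,m})\setminus A$ lies in $\xi_m$. By completeness, this membership question reduces to whether the pushforward by $\alpha_{m+1,n}$ lies in $\xi_n$. Using Lemma~\ref{lemma:wlr.closedcomp.relrangeisnice} together with the identity $r(r(\alpha_{1,m}),\alpha_{m+1,n})=r(\alpha_{1,n})$, the Boolean complement of $A$ pushes forward to $r(\alpha_{1,n})\setminus r(A,\alpha_{m+1,n})$, which is precisely the Boolean complement of $r(A,\alpha_{m+1,n})$ in $\acfrg{\alpha_{1,n}}$. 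Since $\xi_n$ is assumed to be an ultrafilter in this Boolean algebra, exactly one of these two elements belongs to $\xi_n$, and the conclusion for $\xi_m$ follows.

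For the case $m=0$ with $\xi_0$ nonempty, I would invoke the already-established fact that $\xi_1$ is an ultrafilter, combined with Proposition~\ref{prop:ultrafilter.intersection}: given $B\in\acf$ with $B\cap A\neq\emptyset$ for every $A\in\xi_0$, the goal is $B\in\xi_0$. Assuming for contradiction that $B\notin\xi_0$, completeness gives $r(B,\alpha_1)\notin\xi_1$, whence the Boolean ultrafilter criterion applied to $\xi_1$ forces $r(\alpha_1)\setminus r(B,\alpha_1)\in\xi_1$. Picking any $A_0\in\xi_0$, we have $r(A_0,\alpha_1)\in\xi_1$, and intersecting inside the filter $\xi_1$ yields $r(A_0,\alpha_1)\setminus r(B,\alpha_1)\in\xi_1$; by Lemma~\ref{lemma:wlr.closedcomp.relrangeisnice} this equals $r(A_0\setminus B,\alpha_1)$, so pulling back through completeness gives $A_0\setminus B\in\xi_0$. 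Since $(A_0\setminus B)\cap B=\emptyset$, this contradicts the defining property of $B$.

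The main subtlety, and the reason the statement naturally bifurcates into $m\geq 1$ and $m=0$, is that $\acf$ has no top element, so the clean one-line complementation argument of the first case is unavailable at level $0$. This is precisely where the standing hypothesis of closure under relative complements becomes indispensable: it is what allows Lemma~\ref{lemma:wlr.closedcomp.relrangeisnice} to transport complements through the relative range maps, and thereby makes both the Boolean criterion at level $n$ and the intersection criterion at level $0$ interact correctly with completeness.
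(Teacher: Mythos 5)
Your proposal is correct and follows essentially the same route as the paper: for $0<m<n$ you exploit that $r(\,\cdot\,,\alpha_{m+1,n})$ is a (unital) Boolean morphism and that completeness makes $\xi_m$ the preimage of $\xi_n$, which the paper states as the abstract fact that preimages of ultrafilters under Boolean morphisms are ultrafilters and you verify directly via Proposition~\ref{prop:ultrafilter.in.boolean.algebra}; for $m=0$ you run the same complement-pushing argument through Lemma~\ref{lemma:wlr.closedcomp.relrangeisnice} and Proposition~\ref{prop:ultrafilter.intersection}, merely working at level $1$ where the paper works at level $n$. No gaps.
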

\begin{proof}
	Fix $0<m<n$ and consider the relative range function,  $\newf{r(\,\cdot\,,\alpha_{m+1,n})}{\acfrg{\alpha_{1,m}}}{\acfrg{\alpha_{1,n}}}$. The fact that $\acfrg{\alpha_{1,m}}$ and $\acfrg{\alpha_{1,n}}$ are Boolean algebras and $r(\,\cdot\,,\alpha_{m+1,n})$ is a morphism of Boolean algebras ensures that the inverse image of an ultrafilter in $\acfrg{\alpha_{1,n}}$ under $r(\,\cdot\,,\alpha_{m+1,n})$ is an ultrafilter in $\acfrg{\alpha_{1,m}}$. By the definition of $\xi_m$ and $\xi_n$, we see that $\xi_m$ is the inverse image of $\xi_n$; thus, if $\xi_n$ is an ultrafilter, then $\xi_m$ is an ultrafilter.
	
	Now, suppose that $\xi_n$ is an ultrafilter and $\xi_0\neq\emptyset$. We will show that $\xi_0$ satisfies Proposition \ref{prop:ultrafilter.intersection} and will use Proposition \ref{prop:ultrafilter.in.boolean.algebra} to characterize $\xi_n$. Let $C\in\acf$ be such that $C\cap B\neq\emptyset$ for all $B\in\xi_0$ and fix any $A\in\xi_0$. By the condition of $C$, $A\setminus C$ does not belong to $\xi_0$. This says that $r(A,\alpha_{1,n})\in\xi_n$ and $r(A\setminus C,\alpha_{1,n})\notin\xi_n$. Thus, using the notation $D^c=r(\alpha_{1,n})\setminus D$,
	\[\begin{array}{lcl}
	r(A\setminus C,\alpha_{1,n})\notin\xi_n & \Rightarrow & r(A,\alpha_{1,n})\setminus r(C,\alpha_{1,n})\notin\xi_n \\
	& \Rightarrow & r(A,\alpha_{1,n})\cap r(C,\alpha_{1,n})^c\notin\xi_n \\
	& \Rightarrow & r(A,\alpha_{1,n})^c\cup r(C,\alpha_{1,n})\in\xi_n \ \ \ \ \ \ \ \ \ \ \ \mbox{(by \ref{prop:ultrafilter.in.boolean.algebra})} \\
	& \Rightarrow & r(A,\alpha_{1,n})\cap(r(A,\alpha_{1,n})^c\cup r(C,\alpha_{1,n}))\in\xi_n \\ 
	& \Rightarrow & r(A,\alpha_{1,n})\cap r(C,\alpha_{1,n})\in\xi_n \\
	& \Rightarrow & r(C,\alpha_{1,n})\in\xi_n \\
	& \Rightarrow & C\in\xi_0. \end{array}\]
	This shows that $\xi_0$ is an ultrafilter.
\end{proof}

\begin{remark}
	\label{rmk:ultrafilter.of.finite.type}
	Let $\xi^{\alpha}$ be an ultrafilter as in Proposition \ref{prop:ultrafilters.of.finite.type}. If the accommodating family $\acf$ is closed under relative complements, then the previous proposition applies and therefore $\xi_n$ is an ultrafilter for all $0\leq n\leq|\alpha|$ (except, possibly, for $n=0$).
\end{remark}

\begin{proposition}
	\label{prop:ultrafilters.of.infinite.type.2}
	Suppose the accommodating family $\acf$ is closed under relative complements and let $\xia$ be a filter of infinite type. Then $\xia$ is an ultrafilter if and on if $\xi_n$ is an ultrafilter for every $n>0$ and $\xi_0$ is either an ultrafilter or the empty set.
\end{proposition}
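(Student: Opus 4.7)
The plan is to reduce both directions to Proposition \ref{prop:ultrafilters.of.infinite.type}, which characterizes $\xia$ as an ultrafilter precisely when its associated complete family $\{\xi_n\}_{n \geq 0}$ is maximal among complete families for $\alpha$. For the ($\Leftarrow$) direction I would argue that any complete family $\{\ftg{F}_n\}_{n \geq 0}$ with $\xi_n \subseteq \ftg{F}_n$ must satisfy $\xi_n = \ftg{F}_n$ at every positive level, since in the Boolean algebra $\acfrg{\alpha_{1,n}}$ no proper filter strictly contains an ultrafilter; in particular $\ftg{F}_1 = \xi_1$, and completeness of both families then yields
\[\ftg{F}_0 = \{A \in \acf \ | \ r(A, \alpha_1) \in \ftg{F}_1\} = \{A \in \acf \ | \ r(A, \alpha_1) \in \xi_1\} = \xi_0,\]
so the family $\{\xi_n\}$ is maximal. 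The hypothesis on $\xi_0$ is actually redundant for this direction, and is included for symmetry with Proposition \ref{prop:family.ultrafilters}.

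For the ($\Rightarrow$) direction I would proceed by contrapositive. Assuming some $\xi_n$ with $n > 0$ fails to be an ultrafilter, I would use Proposition \ref{prop:ultrafilter.intersection} to pick $A_0 \in \acfrg{\alpha_{1,n}}$ with $A_0 \notin \xi_n$ but $A_0 \cap B \neq \emptyset$ for every $B \in \xi_n$, and propagate this choice up the levels by setting $A_k := r(A_0, \alpha_{n+1, k})$ for $k \geq n$. The goal is then to build an admissible family $\{\ftg{H}_k\}_{k \geq 0}$ for $\alpha$ that strictly extends $\{\xi_k\}_{k \geq 0}$; completing it via Proposition \ref{prop:completion.of.admissible.families} and passing to the filter in $E(S)$ produced by Proposition \ref{prop:filter.from.admissible.family} would furnish a filter strictly containing $\xia$, contradicting maximality.

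The main obstacle is verifying that each $A_k$ inherits the two defining properties of $A_0$: $A_k \notin \xi_k$ and $A_k \cap B \neq \emptyset$ for every $B \in \xi_k$. The non-membership is immediate from Lemma \ref{lemma:equivalence.for.complete.families}, since $A_k = r(A_0, \alpha_{n+1,k}) \in \xi_k$ would force $A_0 \in \xi_n$. For the meeting condition I would argue by contradiction: if $B \in \xi_k$ were disjoint from $A_k$, then Lemma \ref{lemma:wlr.closedcomp.relrangeisnice}, applicable because $\acf$ is closed under relative complements, gives $B \subseteq r(\alpha_{1,k}) \setminus A_k = r(A_0^c, \alpha_{n+1,k})$, where $A_0^c := r(\alpha_{1,n}) \setminus A_0$; upward closure of $\xi_k$ then forces $r(A_0^c, \alpha_{n+1,k}) \in \xi_k$, whence $A_0^c \in \xi_n$ by Lemma \ref{lemma:equivalence.for.complete.families}, contradicting the defining property of $A_0$ since $A_0 \cap A_0^c = \emptyset$.

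With the $A_k$ in hand I would set $\ftg{H}_k = \xi_k$ for $k < n$ and, for $k \geq n$, let $\ftg{H}_k$ be the filter in $\acfrg{\alpha_{1,k}}$ generated by $\xi_k \cup \{A_k\}$; the meeting property just established makes $\ftg{H}_k$ proper, and $A_n \in \ftg{H}_n \setminus \xi_n$ makes the extension strict. Admissibility follows from the identity $r(A_k, \alpha_{k+1}) = A_{k+1}$ together with the weakly left resolving hypothesis, which give $r(B \cap A_k, \alpha_{k+1}) = r(B, \alpha_{k+1}) \cap A_{k+1} \in \ftg{H}_{k+1}$ for any $B \in \xi_k$. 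This contradicts $\xia$ being an ultrafilter and completes the contrapositive; the claim that $\xi_0$ is either an ultrafilter or empty is then exactly Proposition \ref{prop:family.ultrafilters}.
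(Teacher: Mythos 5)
Your proof is correct, but your forward direction takes a genuinely different route from the paper's. The paper argues directly: given $A\in\acfrg{\alpha_{1,n}}$ with $A^c=r(\alpha_{1,n})\setminus A\notin\xi_n$, the idempotent $p=(\alpha_{1,n},A^c,\alpha_{1,n})$ lies outside the ultrafilter $\xia$, so Proposition \ref{prop:ultrafilter.intersection} applied to $\xia$ inside $E(S)$ yields some $q=(\alpha_{1,m},B,\alpha_{1,m})\in\xia$ with $pq=0$; a short case analysis on $m$ versus $n$ then forces $A\in\xi_n$, and Proposition \ref{prop:ultrafilter.in.boolean.algebra} concludes that $\xi_n$ is an ultrafilter (the $n=0$ case being delegated to Proposition \ref{prop:family.ultrafilters}, exactly as you do). You instead argue by contrapositive at the level of the complete family: starting from a witness $A_0$ that $\xi_n$ is not an ultrafilter, you propagate it along the relative range maps, check that each $A_k$ still avoids $\xi_k$ yet meets everything in it (using Lemma \ref{lemma:wlr.closedcomp.relrangeisnice} and completeness), and assemble an admissible family strictly extending $\{\xi_k\}$, contradicting maximality via Proposition \ref{prop:ultrafilters.of.infinite.type}. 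Both are sound; the paper's argument is shorter because it exploits the ambient semilattice $E(S)$ and the two abstract ultrafilter criteria, while yours is more constructive --- it exhibits the strictly larger filter explicitly and stays entirely within the family-of-filters picture, at the cost of the extra verifications of properness and admissibility (all of which you carry out correctly; note that the final completion step is not even needed, since Proposition \ref{prop:filter.from.admissible.family} already produces the filter from the admissible family). Your side remark that the hypothesis on $\xi_0$ is redundant in the ``if'' direction is also correct, since $\xi_0$ is determined by $\xi_1$ through completeness.
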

\begin{proof}
	The ``if'' part is obvious, so assume $\xia$ is an ultrafilter and fix $n>0$. Let $A\in\acfrg{\alpha_{1,n}}$ be such that $A^c=r(\alpha_{1,n}) \setminus A$ does not belong to $\xi_n$. By using the characterization given by Proposition \ref{prop:ultrafilter.in.boolean.algebra} for ultrafilters in Boolean algebras, it suffices to verify that $A\in\xi_n$ to show that $\xi_n$ is an ultrafilter.
	
	Since $p=(\alpha_{1,n},A^c,\alpha_{1,n})\notin\xia$ and $\xia$ is an ultrafilter, it follows from Proposition \ref{prop:ultrafilter.intersection} that there exists $q=(\alpha_{1,m},B,\alpha_{1,m})\in\xia$ such that $pq=0$. There are three cases to consider: if $m=n$, then $B\cap A^c=\emptyset$, which would give $B\subseteq A$ and thus $A\in\xi_n$, since $\xi_n$ is a filter. The case $m<n$ is analogous using $\tilde{q}=(\alpha_{1,n},r(B,\alpha_{m+1,n}),\alpha_{1,n})$ instead of $q$, and the case $m>n$ follows by using $\tilde{p}=(\alpha_{1,m},r(A^c,\alpha_{n+1,m}),\alpha_{1,m})$ instead of $p$ to obtain that $r(A,\alpha_{n+1,m})\in\xi_m$, and therefore $A\in\xi_n$. This shows that $\xi_n$ is an ultrafilter for $n>0$.
	
	The case $n=0$ follows from Proposition \ref{prop:family.ultrafilters}.
\end{proof}

The results of this section therefore give the following description of the ultrafilters in $E(S)$.

\begin{theorem}\label{thm:ultrafilters}
	Let $\lspace$ be a labelled space which is weakly left resolving, and let $S$ be its associated inverse semigroup. Then the ultrafilters in $E(S)$ are:
	\begin{enumerate}[(i)]
		\item The filters of finite type $\xia$ such that $\xi_{|\alpha|}$ is an ultrafilter in $\acfra$ and for each  $b\in\alf$ there exists $A\in\xi_{|\alpha|}$ such that $r(A,b)=\emptyset$.
		\item The filters of infinite type $\xia$ such that the family $\{\xi_n\}_{n\geq 0}$ is maximal among all complete families for $\alpha$.
	\end{enumerate}
	Suppose in addition that the accommodating family $\acf$ is closed under relative complements. Then (ii) can be replaced with
	\begin{enumerate}[(i)']
		\setcounter{enumi}{1}
		\item The filters of infinite type $\xia$ such that $\xi_n$ is an ultrafilter for every $n>0$ and $\xi_0$ is either an ultrafilter or the empty set.
	\end{enumerate}
\end{theorem}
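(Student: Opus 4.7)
The proof is essentially a summary of the results obtained in this section. Recall from Section \ref{section:filters.and.characters.in.es} that every filter in $E(S)$ is uniquely of finite or infinite type, so it suffices to characterize ultrafilters within each of these two classes.

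For the finite type, statement (i) is precisely Proposition \ref{prop:ultrafilters.of.finite.type}. For the infinite type, statement (ii) is precisely Proposition \ref{prop:ultrafilters.of.infinite.type}: a filter of infinite type $\xia$ is an ultrafilter in $E(S)$ if and only if its associated complete family $\{\xi_n\}_{n\geq 0}$ cannot be enlarged to a strictly larger complete family for $\alpha$, which is exactly maximality as stated.

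Finally, under the additional hypothesis that $\acf$ is closed under relative complements, Proposition \ref{prop:ultrafilters.of.infinite.type.2} shows that a filter of infinite type $\xia$ is an ultrafilter in $E(S)$ if and only if $\xi_n$ is an ultrafilter in $\acfrg{\alpha_{1,n}}$ for every $n>0$ and $\xi_0$ is either an ultrafilter in $\acf$ or the empty set. This is exactly condition (ii)', so (ii) may be replaced by (ii)' in the presence of this hypothesis.
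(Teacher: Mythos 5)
Your proof is correct and follows exactly the paper's (implicit) argument: the theorem is stated there as a direct summary of Propositions \ref{prop:ultrafilters.of.finite.type}, \ref{prop:ultrafilters.of.infinite.type} and \ref{prop:ultrafilters.of.infinite.type.2}, combined with the finite/infinite type dichotomy established in Section \ref{section:filters.and.characters.in.es}. Nothing is missing.
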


\begin{remark}
	\label{rmk.ultrafilters.of.infinite.type}
	When the accommodating family $\acf$ is closed under relative complements, we can use Proposition \ref{prop:family.ultrafilters} to conclude that a filter $\xia$ of infinite type is an ultrafilter if and only for every $n\in\mathbb{N}$ there exists $m\geq n$ such that $\xi_m$ is an ultrafilter.
\end{remark}

\section{Tight Filters in $E(S)$}
\label{section.tight.filters.in.es}

Let $\ftight$ be the set of all tight filters in $E(S)$. If we give to it the topology induced from the topology of pointwise convergence of characters, via the bijection between tight characters and tight filters given after Definition \ref{def:character.tight}, it follows that $\ftight$ is (homeomorphic to) the tight spectrum of $E(S)$.

Now we characterize the tight filters in $E(S)$, a goal that is accomplished with Theorem \ref{thm:tight.filters.in.es}. From now on the accommodating family $\acf$ is required to be closed under relative complements, as above. We begin with some auxiliary results.

\begin{proposition}
	\label{prop:tight.filter.is.ultrafilter.in.b.alpha}
	Suppose the accommodating family $\acf$ is closed under relative complements and let $\xia$ be a tight filter in $E(S)$. Then, for every $0\leq n\leq|\alpha|$, $\xi_n$ is an ultrafilter in $\acfrg{\alpha_{1,n}}$ ($\xi_0$ may also be the empty set).
\end{proposition}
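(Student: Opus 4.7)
The plan is to establish, for every $n$ with $0\le n\le|\alpha|$ and $\xi_n$ nonempty, the dichotomy: for any $A\in\xi_n$ and any $B\in\acfrg{\alpha_{1,n}}$ with $B\subseteq A$, either $B\in\xi_n$ or $A\setminus B\in\xi_n$. When $n>0$, applying this with $A=r(\alpha_{1,n})$ (which belongs to $\xi_n$ since it is the top of $\acfrg{\alpha_{1,n}}$ and $\xi_n$ is a nonempty filter) reduces to the ultrafilter characterization of Proposition \ref{prop:ultrafilter.in.boolean.algebra}. When $n=0$ and $\xi_0\neq\emptyset$, the ambient object $\acf$ is only a generalized Boolean algebra, but a short direct verification shows the dichotomy still forces maximality: any strictly larger filter $\ftg{G}$ would contain some $C\notin\xi_0$, and for any $A\in\xi_0$ the dichotomy applied to $A$ and $A\cap C\subseteq A$ forces either $A\cap C\in\xi_0$ (whence $C\in\xi_0$ by upward closure, contradiction) or $A\setminus C\in\xi_0\subseteq\ftg{G}$, which together with $C\in\ftg{G}$ produces the impossible $\emptyset\in\ftg{G}$. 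The case $\xi_0=\emptyset$ requires no argument, as allowed by the statement.

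The core step is to exhibit a finite cover in $E(S)$. Put $x=(\alpha_{1,n},A,\alpha_{1,n})\in\xi^{\alpha}$ and
\[Z=\{(\alpha_{1,n},B,\alpha_{1,n}),\,(\alpha_{1,n},A\setminus B,\alpha_{1,n})\};\]
by Proposition \ref{prop:order.in.es} both elements of $Z$ lie below $x$. To verify $Z$ covers $x$, take an arbitrary nonzero $y\le x$; Proposition \ref{prop:order.in.es} gives $y=(\alpha_{1,n}\beta,D,\alpha_{1,n}\beta)$ with $\emptyset\neq D\subseteq r(A,\beta)$. Lemma \ref{lemma:wlr.closedcomp.relrangeisnice} together with the weakly left resolving hypothesis yields the disjoint decomposition $r(A,\beta)=r(B,\beta)\sqcup r(A\setminus B,\beta)$, so $D$ must meet at least one summand. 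A direct product computation in the semigroup identifies this with the condition $y\wedge z\neq 0$ for the corresponding $z\in Z$, so $Z$ is indeed a finite cover of $x$. Since $\xi^{\alpha}$ is tight, Proposition \ref{prop:tight.filter.characterization} then gives $Z\cap\xi^{\alpha}\neq\emptyset$, which by the very definition of $\xi_n$ is exactly the desired dichotomy.

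The one subtlety I expect is keeping the two cases $n>0$ (a genuine Boolean algebra with top $r(\alpha_{1,n})$) and $n=0$ (a generalized Boolean algebra $\acf$, with no universal top and possibly $\xi_0=\emptyset$) simultaneously in view: the cover construction is oblivious to this distinction, but the extraction of the ultrafilter property in the $n=0$ case requires the small extra argument outlined above because Proposition \ref{prop:ultrafilter.in.boolean.algebra} cannot be invoked directly. No serious technical obstacle is anticipated.
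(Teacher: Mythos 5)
Your proof is correct and takes essentially the same route as the paper's: the heart of both arguments is a two-element cover of the form $\{(\alpha_{1,n},B,\alpha_{1,n}),(\alpha_{1,n},A\setminus B,\alpha_{1,n})\}$, verified to be a cover via the disjoint decomposition $r(A,\beta)=r(B,\beta)\sqcup r(A\setminus B,\beta)$ coming from the weakly left resolving hypothesis and Lemma \ref{lemma:wlr.closedcomp.relrangeisnice}, combined with Proposition \ref{prop:tight.filter.characterization}. The only differences are presentational: the paper argues by contradiction, handling $n>0$ via Proposition \ref{prop:ultrafilter.in.boolean.algebra} and $n=0$ via Proposition \ref{prop:ultrafilter.intersection}, whereas you prove the dichotomy directly and then extract maximality of $\xi_0$ by hand.
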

\begin{proof}
	Fix $n>0$ and suppose $\xi_n$ is not an ultrafilter. There exists then $A\in\acfrg{\alpha_{1,n}}$ such that $A$ and $A^c=r(\alpha_{1,n}) \setminus A$ does not belong to $\xi_n$.
	
	Let $p=(\alpha_{1,n},r(\alpha_{1,n}),\alpha_{1,n})$, $Z=\{(\alpha_{1,n},A,\alpha_{1,n}), (\alpha_{1,n},A^c,\alpha_{1,n})\}$ and note that $p\in\xia$ and $Z$ is a cover for $p$; indeed, for every $q\leq p$, we must have $q=(\alpha_{1,n}\beta,B,\alpha_{1,n}\beta)$ for some $\beta\in\awstar$ and some $B\in\acfrg{\alpha_{1,n}\beta}$. Since $B\subseteq r(\alpha_{1,n}\beta) = r(A,\beta)\cup r(A^c,\beta)$, then $B\cap r(A,\beta)\neq\emptyset$ or $B\cap r(A^c,\beta)\neq\emptyset$ and, therefore, $q\cdot(\alpha_{1,n},A,\alpha_{1,n})\neq0$ or $q\cdot(\alpha_{1,n},A^c,\alpha_{1,n})\neq0$. This shows $Z$ is a cover for $p$. Since $Z\cap\xia=\emptyset$, we see that $\xia$ is not tight (see Proposition \ref{prop:tight.filter.characterization}), a contradiction.
	
	Now suppose $\xi_0$ is nonempty and not an ultrafilter. By Proposition \ref{prop:ultrafilter.intersection}, there is $A\in\acf$ such that $A\notin\xi_0$ and $A\cap B\neq\emptyset$ for all $B\in \xi_0$. Fix any $B\in\xi_0$ and define $p=(\eword,B,\eword)$ and \[Z=\{(\eword,B\cap A,\eword),(\eword,B\setminus A,\eword)\}.\] An argument similar to the one used above shows that $Z$ is a cover for $p$. Since $p\in\xia$ and $Z\cap\xia=\emptyset$ (because $B\cap A\subseteq A\notin\xi_0$ and $A\cap(B\setminus A)=\emptyset$), it follows that $\xia$ is not tight, again a contradiction and the result is proved.
\end{proof}

\begin{corollary}
	\label{corollary:tight.filters.of.infinite.type}
	Suppose the accommodating family $\acf$ is closed under relative complements and let $\xi$ be a filter of infinite type. Then $\xi$ is tight if and only if $\xi$ is an ultrafilter.
\end{corollary}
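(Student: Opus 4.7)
The plan is to combine two results already proved in the excerpt: Proposition \ref{prop:tight.filter.is.ultrafilter.in.b.alpha}, which gives detailed information about the component filters $\xi_n$ when $\xia$ is tight, and Proposition \ref{prop:ultrafilters.of.infinite.type.2}, which characterizes ultrafilters of infinite type in terms of the same components. The two hypotheses match up essentially verbatim, so the corollary should follow almost immediately.

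For the ``if'' direction, I would simply invoke Proposition \ref{prop:ultrafilter.is.tight}, which states that every ultrafilter (in any semilattice with zero) is tight; no use of the infinite-type assumption or of closure under relative complements is needed here.

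For the ``only if'' direction, suppose $\xia$ is a tight filter with $\alpha\in\awinf$. Since $\acf$ is closed under relative complements, Proposition \ref{prop:tight.filter.is.ultrafilter.in.b.alpha} applies and yields that, for each $n>0$, $\xi_n$ is an ultrafilter in $\acfrg{\alpha_{1,n}}$, while $\xi_0$ is either an ultrafilter in $\acf$ or the empty set. This is precisely the condition appearing in Proposition \ref{prop:ultrafilters.of.infinite.type.2} for a filter of infinite type to be an ultrafilter, so we conclude $\xia$ is an ultrafilter.

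There is no real obstacle here; the work has already been done in Propositions \ref{prop:tight.filter.is.ultrafilter.in.b.alpha} and \ref{prop:ultrafilters.of.infinite.type.2}. The only thing to verify is that the hypotheses line up, and they do: both results require $\acf$ closed under relative complements, and both speak of the same family $\{\xi_n\}_{n\geq 0}$ in the same Boolean algebras $\acfrg{\alpha_{1,n}}$. Thus the corollary is an immediate combination of these two propositions together with Proposition \ref{prop:ultrafilter.is.tight}.
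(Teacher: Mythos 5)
Your proposal is correct and follows essentially the same route as the paper, which likewise deduces the corollary from Proposition \ref{prop:tight.filter.is.ultrafilter.in.b.alpha}, Proposition \ref{prop:ultrafilter.is.tight}, and the characterization of ultrafilters of infinite type (you cite Proposition \ref{prop:ultrafilters.of.infinite.type.2} directly, while the paper cites Theorem \ref{thm:ultrafilters}, whose relevant clause is that same proposition). Nothing is missing.
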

\begin{proof}
	This follows from Propositions \ref{prop:tight.filter.is.ultrafilter.in.b.alpha}, \ref{prop:ultrafilter.is.tight} and Theorem \ref{thm:ultrafilters}.
\end{proof}

\begin{corollary}
	\label{corollary:nontight.filters.of.finite.type}
	Suppose the accommodating family $\acf$ is closed under relative complements and let $\xia$ be a filter of finite type. If $\xi_{|\alpha|}$ is not an ultrafilter, then $\xia$ is not tight.
\end{corollary}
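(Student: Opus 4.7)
The plan is to derive this corollary directly as the contrapositive of Proposition \ref{prop:tight.filter.is.ultrafilter.in.b.alpha}, applied at the index $n=|\alpha|$. Suppose for contradiction that $\xia$ is tight. Then Proposition \ref{prop:tight.filter.is.ultrafilter.in.b.alpha} asserts that for every $0\leq n\leq|\alpha|$ the component $\xi_n$ is an ultrafilter in $\acfrg{\alpha_{1,n}}$, with the sole exception that $\xi_0$ is allowed to be empty. Taking $n=|\alpha|$, this yields that $\xi_{|\alpha|}$ is either an ultrafilter or empty.

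The only bookkeeping step is to rule out the empty alternative. Since $\xia$ is of finite type, its associated pair is $(\alpha,\ft)$ with $\ft$ a filter in $\acfra$ by Proposition \ref{prop:finite.word.and.filter.from.filter}, and unwinding the definition $\xi_{|\alpha|}=\{A\in\acf \mid (\alpha,A,\alpha)\in\xia\}$ together with upward-closure of $\xia$ shows $\xi_{|\alpha|}=\ft$. As $\ft$ is a filter, it is nonempty, so the empty alternative is excluded even when $|\alpha|=0$. Therefore $\xi_{|\alpha|}$ is forced to be an ultrafilter in $\acfra$, contradicting the hypothesis of the corollary.

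I do not expect any substantive obstacle in this argument: the real work, namely constructing a cover that witnesses non-tightness whenever a component fails to be an ultrafilter, was already carried out inside the proof of Proposition \ref{prop:tight.filter.is.ultrafilter.in.b.alpha}, first for $n>0$ using a cover of the form $Z=\{(\alpha_{1,n},A,\alpha_{1,n}),(\alpha_{1,n},A^c,\alpha_{1,n})\}$ with $A,A^c\notin\xi_n$, and then separately for $n=0$ using the generalized Boolean algebra structure on $\acf$. Consequently the corollary reduces to the trivial edge-case check carried out in the second paragraph, and no new combinatorial construction is needed.
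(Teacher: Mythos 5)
Your argument is correct and is exactly the paper's approach: the paper's proof reads ``Obvious from Proposition \ref{prop:tight.filter.is.ultrafilter.in.b.alpha},'' i.e.\ the contrapositive you state. Your extra check that $\xi_{|\alpha|}$ cannot be empty (relevant only when $\alpha=\eword$, where the paper's convention for finite-type filters already forbids $\ftg{F}_0=\emptyset$) is a harmless and slightly more careful touch.
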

\begin{proof}
	Obvious from Proposition \ref{prop:tight.filter.is.ultrafilter.in.b.alpha}.
\end{proof}

Before stating our main theorem,  for each $\alpha\in\awstar$ recall from Remark \ref{rmk:b.alpha.boolean.algebra} that 
\[X_{\alpha}=\{\ft\subseteq\acfra \ | \ \ft \ \mbox{is an ultrafilter in} \ \acfra\}\]
and also define
\[X_{\alpha}^{sink}=\{\ft\in X_{\alpha} \ | \ \forall\,b\in\alf, \ \exists\,A\in\ft \ \mbox{such that} \ r(A,b)=\emptyset\}.\]
Observe that $X_{\alpha}^{sink}$ can be defined in many equivalent ways, using Remark \ref{rmk:ultrafilters.of.finite.type}.

Suppose the accommodating family $\acf$ to be closed under relative complements. In this case, for every $\alpha,\beta\in\awplus$ such that $\alpha\beta\in\awplus$, the map $\newf{r(\,\cdot\,,\beta)}{\acfrg{\alpha}}{\acfrg{\alpha\beta}}$ is a morphism of Boolean algebras and, therefore, we have its dual morphism $\newf{f_{\alpha[\beta]}}{X_{\alpha\beta}}{X_{\alpha}}$ given by $f_{\alpha[\beta]}(\ft)=\{A\in\acfra \ | \ r(A,\beta)\in\ft\}$ (this is just the inverse image of $\ft$ under  $r(\,\cdot\,,\beta)$), as in the proof of Proposition \ref{prop:family.ultrafilters}. When $\alpha=\eword$,  if $\ft\in\acfrg{\beta}$ then $\{A\in\acf \ | \ r(A,\beta)\in\ft\}$ is either an ultrafilter in $\acf=\acfrg{\eword}$ or the empty set, and we can therefore consider $\newf{f_{\eword[\beta]}}{X_{\beta}}{X_{\eword}\cup\{\emptyset\}}$.

Employing this new notation, if $\xia$ is a filter in $E(S)$ and $0\leq m<n\leq|\alpha|$, then $\xi_m=f_{\alpha_{1,m}[\alpha_{m+1,n}]}(\xi_n)$.

If we endow the sets $X_{\alpha}$ with the topology given by the convergence of filters stated at the end of Section \ref{subsection:filters.and.characters} (this is the pointwise convergence of characters), it is clear that the functions $f_{\alpha[\beta]}$ are continuous. Furthermore, it is easy to see that $f_{\alpha[\beta\gamma]}=f_{\alpha[\beta]}\circ f_{\alpha\beta[\gamma]}$.

Corollary \ref{corollary:tight.filters.of.infinite.type} says there are no tight filters other than the ultrafilters among the filters of infinite type. The next result classifies the tight filters of finite type.

\begin{proposition}
	\label{prop:tight.filters.of.finite.type}
	Suppose the accommodating family $\acf$ to be closed under relative complements and let $\xia$ be a filter of finite type. Then $\xia$ is a tight filter if and only if $\xi_{|\alpha|}$ is an ultrafilter and at least one of the following conditions hold:
	\begin{enumerate}[(a)]
		\item There is a net $\{\ftg{F}_{\lambda}\}_{\lambda\in\Lambda}\subseteq X_{\alpha}^{sink}$ converging to $\xi_{|\alpha|}$.
		\item There is a net $\{(t_{\lambda},\ftg{F}_{\lambda})\}_{\lambda\in\Lambda}$, where $t_{\lambda}$ is a letter in $\alf$ and $\ftg{F}_{\lambda}\in X_{\alpha t_{\lambda}}$ for each $\lambda\in\Lambda$, such that $\{f_{\alpha[t_{\lambda}]}(\ftg{F}_{\lambda})\}_{\lambda\in\Lambda}$ converges to $\xi_{|\alpha|}$ and for every $b\in\alf$ there is $\lambda_b\in\Lambda$ such that $t_{\lambda}\neq b$ for all $\lambda\geq\lambda_b$.
	\end{enumerate}
\end{proposition}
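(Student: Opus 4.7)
The plan is to read off the characterization from Theorem \ref{thm:closure.of.ultrafilters}, which identifies $\ftight$ as the closure of the set of ultrafilters, together with the explicit inventory of ultrafilters in Theorem \ref{thm:ultrafilters}. One half of the ``$\xi_{|\alpha|}$ is an ultrafilter'' requirement is already delivered by Proposition \ref{prop:tight.filter.is.ultrafilter.in.b.alpha}, so the real content is translating ``$\xi^\alpha$ is a limit of ultrafilters of $E(S)$'' into condition (a) or (b).

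For the forward implication, take any net of ultrafilters $\eta_\lambda$ of $E(S)$ with words $\mu_\lambda$ converging to $\xi^\alpha$. Testing against $(\alpha_{1,i}, r(\alpha_{1,i}), \alpha_{1,i}) \in \xi^\alpha$ for $i \le |\alpha|$ forces $\mu_\lambda$ eventually to start with $\alpha$, so I would pass to such a tail. Then I would dichotomize: either $\{\lambda : \mu_\lambda = \alpha\}$ is cofinal, in which case Theorem \ref{thm:ultrafilters} gives $(\eta_\lambda)_{|\alpha|} \in X_\alpha^{sink}$, and evaluating $\eta_\lambda \to \xi^\alpha$ on elements $(\alpha, A, \alpha)$ immediately yields $(\eta_\lambda)_{|\alpha|} \to \xi_{|\alpha|}$ in $X_\alpha$, giving (a); or else for $\lambda$ large enough each $\mu_\lambda$ strictly extends $\alpha$. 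Writing $\mu_\lambda = \alpha t_\lambda \cdots$ and setting $\ftg{F}_\lambda := (\eta_\lambda)_{|\alpha|+1}$, Propositions \ref{prop:family.ultrafilters} and \ref{prop:ultrafilters.of.infinite.type.2} place $\ftg{F}_\lambda \in X_{\alpha t_\lambda}$; the identity $(\eta_\lambda)_{|\alpha|} = f_{\alpha[t_\lambda]}(\ftg{F}_\lambda)$ transports the convergence to $f_{\alpha[t_\lambda]}(\ftg{F}_\lambda) \to \xi_{|\alpha|}$, and the letter-avoidance condition falls out by testing against $p = (\alpha b, r(\alpha b), \alpha b)$, which lies outside $\xi^\alpha$ and thus outside $\eta_\lambda$ eventually, forcing $t_\lambda \ne b$.

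For the converse, assuming $\xi_{|\alpha|}$ is an ultrafilter and one of (a), (b) holds, I would manufacture a net of ultrafilters of $E(S)$ converging to $\xi^\alpha$ and then appeal to Theorem \ref{thm:closure.of.ultrafilters}. Under (a), each $\ftg{F}_\lambda$ already defines a finite-type ultrafilter of $E(S)$ with word $\alpha$ (Proposition \ref{prop:ultrafilters.of.finite.type}), and pointwise convergence in $X_\alpha$ evaluated at $r(B,\alpha_{i+1,|\alpha|})$ is precisely what is needed to test filter convergence against elements $(\alpha_{1,i}, B, \alpha_{1,i})$. Under (b), I would extend each $\ftg{F}_\lambda$ by Zorn's Lemma to an ultrafilter $\eta_\lambda$ of $E(S)$; by Proposition \ref{prop:order.in.f} its word begins with $\alpha t_\lambda$, and its level $|\alpha|+1$ coincides with $\ftg{F}_\lambda$ since both are ultrafilters of $\acfrg{\alpha t_\lambda}$ and one contains the other. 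Convergence $\eta_\lambda \to \xi^\alpha$ is then checked on a generic $p = (\beta, B, \beta)$ by three cases: $\beta$ incomparable with $\alpha$ (both sides false for all $\lambda$); $\beta$ strictly extending $\alpha$ with first new letter $b$, where the letter-avoidance hypothesis excludes $p$ from $\eta_\lambda$ eventually; and $\beta = \alpha_{1,i}$, where membership is governed by $f_{\alpha[t_\lambda]}(\ftg{F}_\lambda)$ versus $\xi_{|\alpha|}$, which agree eventually by hypothesis.

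The main obstacle I anticipate is the bookkeeping around the letter-avoidance condition: in the forward direction it must be extracted from convergence only on elements of $E(S)$, being careful about letters $b$ for which $\alpha b \notin \awstar$ (there it is automatic from $\alpha t_\lambda \in \awstar$); and in the converse, one must ensure the Zorn extension does not change the level-$(|\alpha|+1)$ ultrafilter or the chosen letter $t_\lambda$, so that $(\eta_\lambda)_{|\alpha|}$ really equals $f_{\alpha[t_\lambda]}(\ftg{F}_\lambda)$.
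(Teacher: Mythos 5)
Your proposal is correct and follows essentially the same route as the paper: both directions go through Theorem \ref{thm:closure.of.ultrafilters} and the inventory of ultrafilters in Theorem \ref{thm:ultrafilters}, with the same cofinality dichotomy on the words of the approximating net (word equal to $\alpha$ giving (a), word strictly extending $\alpha$ giving (b)), the same test elements $(\alpha b, r(\alpha b),\alpha b)$ for the letter-avoidance condition, and the same construction of approximating ultrafilters from $(\alpha,\ftg{F}_\lambda)$ or from an ultrafilter containing the filter of $(\alpha t_\lambda,\ftg{F}_\lambda)$ in the converse. The bookkeeping points you flag (letters $b$ with $\alpha b\notin\awstar$, and the fact that the extension does not change the level-$(|\alpha|+1)$ ultrafilter) are exactly the ones the paper handles.
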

\begin{proof}
	We show the ``if'' part first. Suppose that $\xi_{|\alpha|}$ is an ultrafilter and that (a) holds. For each $\lambda\in\Lambda$, consider the filter $\eta^{\lambda}$ associated with the pair $(\alpha,\ftg{F}_{\lambda})$ (see Proposition \ref{prop:filter.from.finite.word.and.filter}). By Theorem \ref{thm:ultrafilters}, $\{\eta^{\lambda}\}_{\lambda\in\Lambda}$ is a net of ultrafilters in $\filt$. Furthermore, since $\{\eta^{\lambda}_{|\alpha|}\}_{\lambda\in\Lambda}$ converges to $\xi_{|\alpha|}$ in $X_\alpha$ and the functions $f_{\alpha_{1,m}[\alpha_{m+1,|\alpha|}]}$ are continuous, then $\{\eta^{\lambda}_{m}\}_{\lambda\in\Lambda}$ converges to $\xi_{m}$ for every $0\leq m\leq|\alpha|$. From this, it is easy to see that $\{\eta^{\lambda}\}_{\lambda\in\Lambda}$ converges to $\xia$, whence $\xia$ is tight, by Theorem \ref{thm:closure.of.ultrafilters}.
	
	Now, suppose that $\xi_{|\alpha|}$ is an ultrafilter and that (b) holds. For each $\lambda\in\Lambda$, consider an ultrafilter $\eta^{\lambda}$ containing the filter associated with the pair $(\alpha t_\lambda,\ftg{F}_{\lambda})$ and observe that $\eta^{\lambda}_{|\alpha|+1}=\ftg{F}_{\lambda}$. We claim that $\{\eta^{\lambda}\}_{\lambda\in\Lambda}$ converges to $\xia$. To see this, take $(\beta,B,\beta)\in E(S)$. We divide the proof in three cases. \\
	{\it Case 1:} Suppose $\beta$ is a beginning of (or equal to) $\alpha$. In this case, $\beta=\alpha_{1,m}$ for some $m\leq|\alpha|$. Since $\{\eta^{\lambda}_{|\alpha|}\}_{\lambda\in\Lambda}=\{f_{\alpha[t_{\lambda}]}(\eta^{\lambda}_{|\alpha|+1})\}_{\lambda\in\Lambda}$ converges to $\xi_{|\alpha|}$ and $f_{\alpha_{1,m}[\alpha_{m+1,|\alpha|}]}$ is continuous, then $\{\eta^{\lambda}_{m}\}_{\lambda\in\Lambda}$ converges to $\xi_m$. From this, it is easy to see there is $\lambda_0\in\Lambda$ such that for every $\lambda\geq\lambda_0$, $(\beta, B,\beta)\in\eta^{\lambda}$ if and only if $(\beta, B, \beta)\in\xia$. \\
	{\it Case 2:} Suppose $\beta$ and $\alpha$ are not comparable. In this case, $(\beta, B,\beta)\notin\xia$ and $(\beta, B,\beta)\notin\eta^{\lambda}$ for all $\lambda\in\Lambda$. \\
	{\it Case 3:} Suppose $\alpha$ is a beginning of (and not equal to) $\beta$. Write $\beta=\alpha\gamma$ and denote by $b$ the first letter of $\gamma$. By (b), we can choose $\lambda_b\in\Lambda$ such that $t_\lambda\neq b$ for all $\lambda\geq\lambda_b$. This says that $(\beta, B, \beta)\notin\eta^{\lambda}$ if $\lambda\geq\lambda_b$. Since $(\beta, B, \beta)\notin\xia$, we are done.
	
	Now, for ``only if'' side: suppose $\xia$ is a tight filter. By Corollary \ref{corollary:nontight.filters.of.finite.type}, $\xi_{|\alpha|}$ is an ultrafilter in $\acfra$. By Theorem \ref{thm:closure.of.ultrafilters}, we can choose a net $\{\eta^{\lambda}\}_{\lambda\in\Lambda}$ of ultrafilters in $E(S)$ which converges to $\xia$. Consider the following disjoint subsets of $\Lambda$:
	$$\begin{array}{l}
	\Lambda_1 = \{\lambda\in\Lambda \ | \ \mbox{the word of} \ \eta^{\lambda} \ \mbox{is a beginning of (and not equal to)} \ \alpha\}, \\
	\Lambda_2 = \{\lambda\in\Lambda \ | \ \mbox{the word of} \ \eta^{\lambda} \ \mbox{is equal to} \ \alpha\}, \\
	\Lambda_3 = \{\lambda\in\Lambda \ | \ \alpha \ \mbox{is a beginning of (and not equal to) the word of} \ \eta^{\lambda}\}, \\	
	\Lambda_4 = \{\lambda\in\Lambda \ | \ \mbox{the word of} \ \eta^{\lambda} \ \mbox{is not comparable with} \ \alpha\}. \end{array}$$
	Because $\{\eta^{\lambda}\}_{\lambda\in\Lambda}$ converges to $\xia$, $\Lambda_1$ and $\Lambda_4$ cannot be cofinal subsets of $\Lambda$. Since $\Lambda=\Lambda_1\cup\Lambda_2\cup\Lambda_3\cup\Lambda_4$, then $\Lambda_2$ or $\Lambda_3$ are cofinal. If $\Lambda_2$ is cofinal then, by Theorem \ref{thm:ultrafilters},  $\{\eta^{\lambda}_{|\alpha|}\}_{\lambda\in\Lambda_2}$ is a net in $X_{\alpha}^{sink}$. Furthermore, it is clear that $\{\eta^{\lambda}_{|\alpha|}\}_{\lambda\in\Lambda_2}$ converges to $\xi_{|\alpha|}$, establishing (a). If $\Lambda_3$ is cofinal, denote by $t_\lambda$ the first letter after $\alpha$ in the word of $\eta^{\lambda}$, for each $\lambda\in\Lambda_3$. Thus, $\{(t_{\lambda},\eta^{\lambda}_{|\alpha|+1})\}_{\lambda\in\Lambda_3}$ is a net such that $\eta^{\lambda}_{|\alpha|+1}\in X_{\alpha t_{\lambda}}$. Since $\{\eta^{\lambda}\}_{\lambda\in\Lambda_3}$ converges to $\xia$, then $\{f_{\alpha[t_{\lambda}]}(\eta^{\lambda}_{|\alpha|+1})\}_{\lambda\in\Lambda_3}=\{\eta^{\lambda}_{|\alpha|}\}_{\lambda\in\Lambda_3}$ converges to $\xi_{|\alpha|}$. This establishes the first part of (b).
	
	Finally, let $b\in\alf$ be given. If $\alpha b$ is not a labelled path, then $t_{\lambda}\neq b$ for every $\lambda\in\Lambda_3$. If $\alpha b$ is a labelled path, then $(\alpha b, r(\alpha b), \alpha b)\in E(S)$ is such that $(\alpha b, r(\alpha b), \alpha b)\notin \xia$. By the definition of convergence, there exists $\lambda_b\in\Lambda_3$ such that $(\alpha b, r(\alpha b), \alpha b)\notin \eta^{\lambda}$ for all $\lambda\geq\lambda_b$, which is equivalent to $t_{\lambda}\neq b$ for all $\lambda\geq\lambda_b$. This completes (b).	
\end{proof}

\begin{remark}
	\label{rmk:tight.filters.of.finite.type}
	The following conditions are easily seen to be equivalent (and therefore condition (b) in the above result can be replaced with its equivalent formulation below):
	\begin{enumerate}[(i)]
		\item There is a net $\{(t_{\lambda},\ftg{F}_{\lambda})\}_{\lambda\in\Lambda}$, where $t_{\lambda}$ is a letter in $\alf$ and $\ftg{F}_{\lambda}\in X_{\alpha t_{\lambda}}$ for each $\lambda\in\Lambda$, such that $\{f_{\alpha[t_{\lambda}]}(\ftg{F}_{\lambda})\}_{\lambda\in\Lambda}$ converges to $\xi_{|\alpha|}$ and for every $b\in\alf$ there is $\lambda_b\in\Lambda$ such that $t_{\lambda}\neq b$ for all $\lambda\geq\lambda_b$.
		\item There is a net $\{(\beta_{\lambda},\ftg{F}_{\lambda})\}_{\lambda\in\Lambda}$, where $\beta_{\lambda}$ is a word in $\awplus$ and $\ftg{F}_{\lambda}\in X_{\alpha \beta_{\lambda}}$ for each $\lambda\in\Lambda$, such that $\{f_{\alpha[\beta_{\lambda}]}(\ftg{F}_{\lambda})\}_{\lambda\in\Lambda}$ converges to $\xi_{|\alpha|}$ and for every $\gamma\in\awplus$ there is $\lambda_\gamma\in\Lambda$ such that $\beta_{\lambda}$ and $\gamma$ are not comparable for all $\lambda\geq\lambda_b$.
	\end{enumerate}
	Furthermore, if we give $\alf$ the discrete topology, the condition ``for every $b\in\alf$ there is $\lambda_b\in\Lambda$ such that $t_{\lambda}\neq b$ for all $\lambda\geq\lambda_b$'' is the same as saying the net $\{t_{\lambda}\}_{\lambda\in\Lambda}\subseteq\alf$ converges to infinity.
\end{remark}

The next result gives a more algebraic description for the tight filters of finite type in $E(S)$.

\begin{proposition}
	\label{prop:tight.filters.of.finite.type.algebraic}
	Suppose the accommodating family $\acf$ is closed under relative complements and let $\xia$ be a filter of finite type. Then $\xia$ is a tight filter if and only if $\xi_{|\alpha|}$ is an ultrafilter and for each  $A\in\xi_{|\alpha|}$ at least one of the following conditions hold:
	\begin{enumerate}[(a)]
		\item $\lbf(A\dgraph^1)$ given by (\ref{eqn:lae1}) is infinite.
		\item There exists $B\in\acfra$ such that $\emptyset\neq B\subseteq A\cap \dgraph^0_{sink}$.
	\end{enumerate}
\end{proposition}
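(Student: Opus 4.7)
The plan is to reduce everything to Proposition~\ref{prop:tight.filters.of.finite.type}: $\xia$ is tight (with $\xi_{|\alpha|}$ already an ultrafilter) if and only if one of the net-theoretic conditions (a) or (b) there holds. So my job is to translate ``existence of an approximating net in $X_{\alpha}^{sink}$ or of a net of pairs $(t_\lambda,\ftg{F}_\lambda)$ with $t_\lambda$ going to infinity'' into the internal algebraic statement on elements $A\in\xi_{|\alpha|}$.

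For the forward direction, assume $\xia$ is tight and fix $A\in\xi_{|\alpha|}$. If condition (b) of Proposition~\ref{prop:tight.filters.of.finite.type} is the one supplying the approximating net, convergence of $\{f_{\alpha[t_\lambda]}(\ftg{F}_\lambda)\}$ to $\xi_{|\alpha|}$ gives eventually $r(A,t_\lambda)\in\ftg{F}_\lambda$, hence $r(A,t_\lambda)\neq\emptyset$, i.e.\ $t_\lambda\in\lbf(A\dgraph^1)$; combined with the fact that $t_\lambda$ eventually avoids every finite subset of $\alf$, this forces $\lbf(A\dgraph^1)$ to be infinite, giving (a). If instead condition (a) of Proposition~\ref{prop:tight.filters.of.finite.type} holds and $\lbf(A\dgraph^1)$ happens to be finite, say $\{b_1,\ldots,b_k\}$, pick $\lambda$ large enough that $A\in\ftg{F}_\lambda$ and choose $C_i\in\ftg{F}_\lambda$ with $r(C_i,b_i)=\emptyset$; then $B:=A\cap C_1\cap\cdots\cap C_k$ lies in $\ftg{F}_\lambda$, so is nonempty, and the combination $\lbf(B\dgraph^1)\subseteq\lbf(A\dgraph^1)=\{b_1,\ldots,b_k\}$ together with $r(B,b_i)=\emptyset$ forces every vertex of $B$ to be a sink, yielding (b).

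For the backward direction, suppose $\xi_{|\alpha|}$ is an ultrafilter and every $A\in\xi_{|\alpha|}$ satisfies (a) or (b). A monotonicity observation is crucial: if $B\subseteq A\cap\dgraph^0_{sink}$ witnesses (b) for some $A$, then the same $B$ witnesses (b) for every superset of $A$. Hence either (b) holds for every $A\in\xi_{|\alpha|}$, or there is $A_0\in\xi_{|\alpha|}$ for which (b) fails and consequently (a) holds for all $A\in\xi_{|\alpha|}$ contained in $A_0$. In the first case, for each $A\in\xi_{|\alpha|}$ pick a witness $B_A$ of (b), extend $\{B_A\}$ to an ultrafilter $\ftg{F}_A$ in $\acfra$; since $r(B_A,b)=\emptyset$ for all $b\in\alf$, we have $\ftg{F}_A\in X_\alpha^{sink}$, and using the ultrafilter property of $\xi_{|\alpha|}$ one checks that the net $\{\ftg{F}_A\}_{A\in\xi_{|\alpha|}}$ (directed by reverse inclusion) converges to $\xi_{|\alpha|}$, giving condition (a) of Proposition~\ref{prop:tight.filters.of.finite.type}.

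In the second case, index the net by pairs $(A,F)$ with $A\in\xi_{|\alpha|}$, $A\subseteq A_0$, and $F\subseteq\alf$ finite (reverse-inclusion order on both coordinates). Since $\lbf(A\dgraph^1)$ is infinite, pick a letter $t_{(A,F)}\in\lbf(A\dgraph^1)\setminus F$; then $r(A,t_{(A,F)})$ is a nonempty element of $\acfrg{\alpha t_{(A,F)}}$, which I extend to an ultrafilter $\ftg{F}_{(A,F)}\in X_{\alpha t_{(A,F)}}$. Using the weakly left resolving hypothesis and the ultrafilter property of $\xi_{|\alpha|}$ (via $C\notin\xi_{|\alpha|}\Rightarrow C^c\in\xi_{|\alpha|}$), one verifies $\{f_{\alpha[t_{(A,F)}]}(\ftg{F}_{(A,F)})\}$ converges to $\xi_{|\alpha|}$; the escape condition on the letters is immediate from the $F$-coordinate. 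This produces the net required by condition (b) of Proposition~\ref{prop:tight.filters.of.finite.type}, completing the equivalence. The main obstacle, and the step that requires the most care, is the second case of the backward direction: arranging the indexing so that the constructed net actually converges (in particular handling $C\notin\xi_{|\alpha|}$ via the ultrafilter/complement interplay together with weak left resolving) rather than just lies ``near'' $\xi_{|\alpha|}$.
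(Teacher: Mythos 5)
Your proof is correct, but your converse takes a genuinely different route from the paper's. The forward direction (tight $\Rightarrow$ (a) or (b)) is essentially the paper's argument: both of you extract the dichotomy from the two net conditions of Proposition \ref{prop:tight.filters.of.finite.type}, intersecting the finitely many $C_b$'s inside a single $\ftg{F}_{\lambda}\in X_{\alpha}^{sink}$ in one case, and counting the infinitely many distinct letters $t_{\lambda}$ with $r(A,t_{\lambda})\neq\emptyset$ in the other. For the converse, however, the paper does not pass back through Proposition \ref{prop:tight.filters.of.finite.type} at all: it verifies tightness directly via the cover criterion of Proposition \ref{prop:tight.filter.characterization}, showing that any finite cover $Z$ of $(\alpha,A,\alpha)$ must contain elements of the form $(\alpha,D_i,\alpha)$ with $D_i\neq\emptyset$, that $D_1\cup\cdots\cup D_n\in\xi_{|\alpha|}$ (by contradiction), and then using primeness of the ultrafilter $\xi_{|\alpha|}$ to place some $D_i$ in $\xi_{|\alpha|}$. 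You instead construct explicit nets of ultrafilters converging to $\xia$, splitting on whether (b) holds for all of $\xi_{|\alpha|}$ or (a) holds for all elements below some $A_0$; this works: your monotonicity observation for (b) is correct, the witnesses $B_A\subseteq\dgraph^0_{sink}$ do force $\ftg{F}_A\in X_{\alpha}^{sink}$, and the convergence checks go through using $r(C\cap C',t)=r(C,t)\cap r(C',t)$ together with the complement property of the ultrafilter $\xi_{|\alpha|}$ (for $\alpha=\eword$ one should invoke Proposition \ref{prop:ultrafilter.intersection} to get a disjoint element of $\xi_0$ rather than a complement, since $\acf$ is only a generalized Boolean algebra). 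Your route makes the equivalence with Proposition \ref{prop:tight.filters.of.finite.type} fully symmetric and stays in the topological picture of Theorem \ref{thm:closure.of.ultrafilters}, at the cost of two somewhat delicate net constructions; the paper's cover argument is more combinatorial, avoids building approximating ultrafilters altogether, but hinges on the reduction to cover elements of the form $(\alpha,D,\alpha)$ and on primeness of $\xi_{|\alpha|}$.
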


\begin{proof}
	Let $\xia$ be a tight filter as in part (a) of Proposition \ref{prop:tight.filters.of.finite.type} and $A\in \xi_{|\alpha|}$ be given. If $\lbf(A\dgraph^1)$  is infinite there is nothing to prove. Suppose then that $\lbf(A\dgraph^1)$ is finite. Since $A\in\xi_{|\alpha|}$ there exists $\lambda$ such that $A\in\ft_{\lambda}\in X_{\alpha}^{sink}$. By the definition of $X_{\alpha}^{sink}$, for each $b\in \lbf(A\dgraph^1)$ there exists $C_b\in\ft_{\lambda}$ such that $r(C_b,b)=\emptyset$. Since $\lbf(A\dgraph^1)$ is finite, if $C$ is the intersection of all such $C_b$ then $C\in\ft_{\lambda}$. It follows that $r(A\cap C,b)=\emptyset$ for all $b\in\alf$ and hence $A\cap C\subseteq \dgraph^0_{sink}$. Defining $B=A\cap C$, we have that $B\in\ft_{\lambda}$ so that $B\in\acfra$ and clearly $\emptyset\neq B\subseteq A\cap \dgraph^0_{sink}$.
	
	Now let $\xia$ be a tight filter as in part (b) of Proposition \ref{prop:tight.filters.of.finite.type} and $A\in \xi_{|\alpha|}$ be given. There exists $\lambda_0$ such that $A\in f_{\alpha[t_{\lambda}]}(\ftg{F}_{\lambda})$ for all $\lambda\geq\lambda_0$. That means that $r(A,t_{\lambda})\neq\emptyset$ for all $\lambda\geq\lambda_0$. Observe that there are infinitely many different letters in the set of all $t_{\lambda}$ for $\lambda\geq\lambda_0$. It follows that $\lbf(A\dgraph^1)$ is infinite.
	
	For the converse, we use Proposition \ref{prop:tight.filter.characterization}, and observe it is enough to consider $x\in\xia$ of the form $x=(\alpha,A,\alpha)$: indeed, if $x=(\alpha',A,\alpha')\in\xia$ and $Z$ is a finite cover for $x$ observe that $\alpha=\alpha'\alpha''$; if we define $x'=(\alpha,r(A,\alpha''),\alpha)$ and $Z'=\{zx'\ |\ z\in Z\}\setminus\{0\}$ (which is a finite cover for $x'$), it is easy to see that if $Z'\cap \xia\neq\emptyset$ then $Z\cap\xia\neq\emptyset$.
	
	Consider therefore $x=(\alpha,A,\alpha)\in\xia$ and le $Z$ be a finite cover for $x$. We claim that there exists $\emptyset\neq D\in \acfra$ such that $(\alpha,D,\alpha)\in Z$. Indeed, if $\lbf(A\dgraph^1)$ is infinite then there are infinitely many elements of the form $y_i=(\alpha b_i,r(A,b_i),\alpha b_i)$ with distinct $b_i$'s in $\lset{x}$. For each $y_i$ there exists $z=(\alpha\beta,C,\alpha\beta)\in Z$ such that $y_iz\neq 0$. Since $Z$ is finite and $\alpha\beta$ and $\alpha b_i$ are comparable, there must exist $(\alpha,D,\alpha)\in Z$ for some $\emptyset\neq D\in \acfra$. If there exists $B\in\acfra$ such that $\emptyset\neq B\subseteq A\cap \dgraph^0_{sink}$ then $y=(\alpha,B,\alpha)\in\lset{x}$. Since $B\subseteq \dgraph^0_{sink}$, for an element of $Z$ to intersect $y$ it has to be of the form $(\alpha,D,\alpha)\in Z$ for some $\emptyset\neq D\in \acfra$.
	
	Now let $D_1,\ldots,D_n$ be all the sets in $\acfra$ such that $(\alpha,D_i,\alpha)\in Z$. As we have seen above, $n\geq 1$. Observe that $D_1\cup\cdots\cup D_n\in\xi_{|\alpha|}$. If not, since $\xi_{|\alpha|}$ is an ultrafilter there would exist $C\in\xi_{|\alpha|}$ such that $C\cap (D_1\cup\cdots\cup D_n\in\xi_{|\alpha|})=\emptyset$. Now $A\cap C\in\xi_{|\alpha|}$ so that it satisfies (a) or (b). Arguing as above it can be shown that there exists $D$ such that $(\alpha,D,\alpha)$ is in $Z$ and intersects $(\alpha,A\cap C,\alpha)$ which is a contradiction. Since $\xi_{|\alpha|}$ is an ultrafilter, it is a prime filter so that $D_i\in\xi_{|\alpha|}$ for some $i=1,\ldots,n$. It follows that $\xia\cap Z\neq\emptyset$.
\end{proof}

Note that, in using the proposition above, it is enough to verify condition (b) only in the case $\lbf(A\dgraph^1)$ is finite.

Joining Corollary \ref{corollary:tight.filters.of.infinite.type} and the previous proposition, we get the main result of this section.

\begin{theorem}
	\label{thm:tight.filters.in.es}
	Let $\lspace$ be a labelled space which is weakly left resolving and whose accommodating family $\acf$ is closed under relative complements, and let $S$ be its associated inverse semigroup. Then the tight filters in $E(S)$ are:
	\begin{enumerate}[(i)]
		\item The ultrafilters of infinite type.
		\item The filters of finite type $\xia$ such that $\xi_{|\alpha|}$ is an ultrafilter in $\acfra$ and for each  $A\in\xi_{|\alpha|}$ at least one of the following conditions hold:
		\begin{enumerate}[(a)]
			\item $\lbf(A\dgraph^1)$ is infinite.
			\item There exists $B\in\acfra$ such that $\emptyset\neq B\subseteq A\cap \dgraph^0_{sink}$.
		\end{enumerate}
	\end{enumerate}
\end{theorem}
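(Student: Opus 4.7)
The plan is to observe that this theorem is a direct consolidation of two previously established results, so my approach is a single case-split on whether the filter in question is of finite or infinite type.

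First I would invoke the structural dichotomy established at the end of Section~\ref{section:filters.and.characters.in.es}: every filter in $E(S)$ is either of finite type (having a largest word $\alpha\in\awstar$) or of infinite type (associated with a word in $\awinf$), and these two classes are disjoint. It therefore suffices to characterize the tight filters in each class separately.

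For filters of infinite type, item~(i) is immediate from Corollary~\ref{corollary:tight.filters.of.infinite.type}, which under the standing assumption that $\acf$ is closed under relative complements asserts that a filter of infinite type is tight if and only if it is an ultrafilter. For filters of finite type, item~(ii) follows directly from Proposition~\ref{prop:tight.filters.of.finite.type.algebraic}, which provides exactly the required algebraic characterization in terms of $\xi_{|\alpha|}$ being an ultrafilter in $\acfra$ together with the dichotomy (a)/(b) for each $A\in\xi_{|\alpha|}$.

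The main obstacle has already been overcome upstream: in the proof of Proposition~\ref{prop:tight.filters.of.finite.type.algebraic}, the net-theoretic tightness criterion of Proposition~\ref{prop:tight.filters.of.finite.type} had to be converted into the concrete conditions (a) and (b), and in particular the backwards implication required constructing an explicit finite cover witnessing the failure of tightness whenever some $A\in\xi_{|\alpha|}$ satisfies neither (a) nor (b). With those translations in hand, assembling the present theorem requires nothing more than concatenating the two prior statements into one unified classification.
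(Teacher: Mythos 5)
Your proposal is correct and matches the paper's own argument: the theorem is stated immediately after the remark ``Joining Corollary~\ref{corollary:tight.filters.of.infinite.type} and the previous proposition, we get the main result of this section,'' so the intended proof is precisely the assembly of Corollary~\ref{corollary:tight.filters.of.infinite.type} and Proposition~\ref{prop:tight.filters.of.finite.type.algebraic} via the finite/infinite type dichotomy. Nothing further is needed.
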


\begin{example}\label{example:tight.spectrum.dgraph}
	Let $\dgraphuple$ be any graph and define a labelled graph by setting $\alf=\dgraph^1$ and $\lbf:\alf\to\dgraph^1$ the identity map. Let $\acf$ be the sets of all finite subsets of $\dgraph^0$, then $\acf$ is an accommodating family for $\lgraph$ that is closed under relative complements. We prove that for the labelled space $\lspace$, $\ftight$ is homeomorphic to the boundary path space of the graph $\partial \dgraph$ as in \cite{MR3119197}.
	
	Recall that $\partial\dgraph = \dgraph^{\infty}\cup\{\lambda\in\dgraph^*\,|\,r(\lambda)\text{ is singular}\}$. On $\dgraph^* \cup\dgraph^{\infty}$ we give the initial topology induced by the map $\theta:\dgraph^* \cup\dgraph^{\infty}\to \{0,1\}^{\dgraph^*}$ defined by $\theta(\lambda)(\mu)=1$ if $\mu$ is a beginning of $\lambda$ and $0$ otherwise. We then give the subspace topology on $\partial\dgraph$.
	
	Now, observe that for $\alpha\in\awplus$, $\acfra=\{\emptyset,r(\alpha)\}$ so that it admits only one ultrafilter, which is $\{r(\alpha)\}$. And in $\acf_{\eword}=\acf$ the ultrafilters are exactly the principal filters $\usetr{\{v\}}{\acf}$ where $v\in\dgraph^0$. It follows that for each $\alpha\in\awplus\cup\awinf$ there is only one filter $\xi$ in $E$ with word $\alpha$. For the empty word the filters $\xi^{\eword}$ such that $\xi_0$ is an ultrafilter are in a bijective correspondence with vertices $v\in\dgraph^0$.
	
	By Theorem \ref{thm:tight.filters.in.es} we see that the tight filters are those associated with an infinite word $\alpha\in\awinf$, those associated with $\alpha\in\awplus$ such that $r(\alpha)$ is singular or $\xi^{\eword}$ where $\xi_0$ is the principal filter generated by a singular vertex. It follows that there is a bijection between $\ftight$ and $\partial\dgraph$ which is a homeomorphism since both topologies are essentially given by pointwise convergence.

\end{example}

\begin{proposition}\label{prop:tight.spectrum.is.boundary.path.space}
	Let $\lgraph$ be a left resolving labelled graph such that $\dgraph^0$ is a finite set and let  $\acf=\powerset{\dgraph^0}$. Then the tight spectrum $\ftight$ of the labelled space $\lspace$ is homeomorphic to the boundary path space $\partial\dgraph$ of the graph $\dgraph$.
\end{proposition}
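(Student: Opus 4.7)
The plan is to identify explicitly the tight filters in $E(S)$ using Theorem \ref{thm:tight.filters.in.es} and to match them bijectively and topologically with $\partial\dgraph$. First I would verify the hypotheses of that theorem: $\acf=\powerset{\dgraph^0}$ is trivially closed under relative complements, and left-resolving (distinct labels on edges with a common range) means that any path is uniquely determined by its range vertex and label, which promotes the non-trivial inclusion $r(A,\alpha)\cap r(B,\alpha)\subseteq r(A\cap B,\alpha)$ and hence makes the labelled space weakly left resolving. The key structural observation is that, since $\dgraph^0$ is finite, each $\acfra=\powerset{r(\alpha)}$ is a finite Boolean algebra whose ultrafilters are exactly the principal ones associated with singletons $\{v\}$ for $v\in r(\alpha)$.

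For a tight filter of infinite type $\xi^{\alpha}$, Corollary \ref{corollary:tight.filters.of.infinite.type} and Theorem \ref{thm:ultrafilters} force $\xi_n$ to be the principal ultrafilter in $\acfrg{\alpha_{1,n}}$ at some $v_n\in r(\alpha_{1,n})$ for each $n\geq 1$; completeness then provides an edge from $v_{n-1}$ to $v_n$ labelled $\alpha_n$, which is unique by left-resolving. Concatenating yields an infinite path $\lambda=\lambda_1\lambda_2\cdots\in\dgraph^{\infty}$ with $\lbf(\lambda)=\alpha$, and completeness at level $0$ together with $\dgraph^0\in\acf$ makes $\xi_0$ automatically non-empty and equal to the principal ultrafilter at $\{s(\lambda_1)\}$. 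For a tight filter of finite type $\xi^{\alpha}$, Theorem \ref{thm:tight.filters.in.es}(ii) analogously gives $\xi_{|\alpha|}$ principal at some $v\in r(\alpha)$; specializing the alternative (a) or (b) to the smallest element $A=\{v\}\in\xi_{|\alpha|}$, condition (b) reduces to ``$v$ is a sink'' and (a) to ``$\lbf(\{v\}\dgraph^1)$ is infinite''. Crucially, since $\dgraph^0$ is finite, if $v$ is an infinite emitter then by the pigeonhole principle some vertex $u$ receives infinitely many edges from $v$, and left-resolving forces these edges to carry distinct labels, so $\lbf(\{v\}\dgraph^1)$ is indeed infinite; conversely, a regular $v$ violates both (a) and (b) at $A=\{v\}$. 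Hence tight filters of finite type correspond to pairs $(\alpha,v)$ with $v\in r(\alpha)$ singular, and left-resolving again provides a unique finite path $\lambda\in\dgraph^\ast$ with $\lbf(\lambda)=\alpha$ and $r(\lambda)=v$ (taking $\lambda=v$ when $\alpha=\eword$).

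Combining, we obtain a bijection $\Phi:\partial\dgraph\to\ftight$. To see this is a homeomorphism, I would argue as in Example \ref{example:tight.spectrum.dgraph}: convergence in $\ftight$ is pointwise convergence of the associated characters on $E(S)$, which under our explicit description unravels to agreement of the labels and associated range vertices on any prescribed finite initial segment; this matches precisely the subspace topology on $\partial\dgraph$ induced by the map $\theta$. The main obstacle I anticipate is not a single hard step but rather the careful bookkeeping---in particular, running the pigeonhole argument that reduces the finite-type tight condition to ``$v$ is singular'', and then verifying the neighbourhood-base correspondence on both sides exactly as in the example.
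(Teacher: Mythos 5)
Your proposal is correct and follows essentially the same route as the paper's proof: identifying the ultrafilters in the finite Boolean algebras $\acfra$ as the principal ones, using the left-resolving hypothesis to recover a unique (boundary) path from the chain of associated vertices, reducing the finite-type tightness conditions of Theorem \ref{thm:tight.filters.in.es} to singularity of the terminal vertex, and matching the two pointwise-convergence topologies. The only cosmetic difference is that you build the correspondence starting from tight filters and extracting paths, whereas the paper constructs $\Phi:\partial\dgraph\to\ftight$ first and then reverses the construction for surjectivity.
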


\begin{proof}
	Let $\lambda=\lambda_1\lambda_2\cdots\lambda_m$ be a finite path in $\partial\dgraph$, set $\alpha=\lbf(\lambda)$ and, for each $0\leq n\leq m$, let $\xi^{\alpha}_n=\usetr{\{r(\lambda_n)\}}{\acfrg{\alpha_{1,n}}}$ (which for the given accomodating family means simply the set of all subsets of $r(\alpha_{1,n})$ that contain the element $r(\lambda_n)$). When $n=0$, we set $\lambda_0=s(\lambda)$. Similarly, if $\lambda=\lambda_1\lambda_2\cdots$ is an infinite path in $\partial\dgraph$, set $\alpha=\lbf(\lambda)$ and let $\xi^{\alpha}_n=\usetr{\{r(\lambda_n)\}}{\acfrg{\alpha_{1,n}}}$ for $n\geq0$. In both cases, we can consider the pair $(\alpha,\{\xi^{\alpha}_n\}_{n\geq0})$ associated with $\lambda$.
	
	We claim that $\{\xi^{\alpha}_n\}_{n\geq0}$ is a complete family for $\alpha$. Indeed, fix $n\geq0$ (with $n<|\lambda|$ if $\lambda$ is finite) and let $A\in\xi^{\alpha}_n$. Since $r(\lambda_n)\in A$, then $r(\lambda_{n+1})\in r(A,\alpha_{n+1})$, hence
	\[\xi^{\alpha}_n\subseteq\{A\in\acfrg{\alpha_{1,n}} \ | \ r(A,\alpha_{n+1})\in\xi^{\alpha}_{n+1}\}.\]
	
	On the other hand, let $A\in\acfrg{\alpha_{1,n}}$ be such that $r(A,\alpha_{n+1})\in\xi^{\alpha}_{n+1}$. Therefore there exists $v\in A$ and $e\in\dgraph^1$ such that $s(e)=v$, $r(v)=r(\lambda_{n+1})$ and $\lbf(e)=\alpha_{n+1}$. The labelled graph is left resolving, so we must have $e=\lambda_{n+1}$. This says $r(\lambda_n)=s(\lambda_{n+1})=v\in A$, showing that
	\[\{A\in\acfrg{\alpha_{1,n}} \ | \ r(A,\alpha_{n+1})\in\xi^{\alpha}_{n+1}\}\subseteq\xi^{\alpha}_n\] and that the family $\{\xi^{\alpha}_n\}_{n\geq0}$ is complete for $\alpha$, as claimed. Also, observe that $\xi^{\alpha}_n$ is an ultrafilter in $\acfrg{\alpha_{1,n}}$ for all $n$.
	
	Let $\xia$ be the filter in $E(S)$ associated with the pair $(\alpha,\{\xi^{\alpha}_n\}_{n\geq0})$. We show next that $\xia$ is tight. There are three cases to consider. \\
	{\it Case 1:} $\lambda$ is an infinite path. In this case, $\xia$ is a filter of infinite type and it is clear that it is an ultrafilter. It follows from Theorem \ref{thm:tight.filters.in.es} that $\xia$ is tight. \\
	{\it Case 2:} $\lambda$ is a finite path and $s^{-1}(r(\lambda))$ is an infinite set. In this case, $\xia$ is a filter of finite type. Let $A\in\xi^{\alpha}_{|\alpha|}$. Using that $r(\lambda)\in A$, that the labelled graph is left resolving and that $\dgraph^0$ is finite, we see that $\lbf(A\dgraph^1)=\infty$. It follows that $\xia$ is tight, by Theorem \ref{thm:tight.filters.in.es}. \\
	{\it Case 3:} $\lambda$ is a finite path and $r(\lambda)\in\dgraph^0_{sink}$. In this case, $\xia$ is a filter of finite type. For every $A\in\xi^{\alpha}_{|\alpha|}$, letting $B=\{r(\lambda)\}$ we have that  $B\in\acf$ and $\emptyset\neq B\subseteq A\cap\dgraph^0_{sink}$. Again Theorem \ref{thm:tight.filters.in.es} ensures $\xia$ is tight.
	
	We can thus define \[\newfd{\Phi}{\partial\dgraph}{\ftight}{\lambda}{\xia}.\]	
	By construction, $\Phi$ is clearly injective. Let us show surjectivity: let $\xia\in\ftight$. For each $n\geq0$ (with $n\leq|\alpha|$ if $\xia$ is of finite type) the set $r(\alpha_{1,n})$ is finite, and thus all ultrafilters in $\acfrg{\alpha_{1,n}}=\powerset{r(\alpha_{1,n})}$ are principal; in particular, there exists $v_n\in r(\alpha_{1,n})$ such that $\xi^{\alpha}_n=\usetr{\{v_n\}}{\acfrg{\alpha_{1,n}}}$.
	
	Observe that for each $n\geq1$, there exists a unique $\lambda_n\in\dgraph^1$ such that $s(\lambda)=v_{n-1}$, $r(\lambda_n)=v_n$ and $\lbf(\lambda)=\alpha_n$. Indeed, using that $\{\xi^{\alpha}_n\}_{n\geq0}$ is a complete family, we must have $r(\{v_{n-1}\},\alpha_n)\in\xi^{\alpha}_n$. Hence, there is $\lambda_n\in\dgraph^1$ such that $s(\lambda)=v_{n-1}$, $r(\lambda_n)=v_n$ and $\lbf(\lambda)=\alpha_n$. The uniqueness of $\lambda_n$ follows from the fact that the labelled graph is left resolving.
	
	Thus, we have constructed a path $\lambda=\lambda_1\lambda_2\cdots$ such that $\lbf(\lambda)=\alpha$. Let us see that $\lambda\in\partial\dgraph$: if $|\alpha|=\infty$, there is nothing to do. Suppose $|\alpha|<\infty$ and consider $A=\{r(\lambda)\}=\{v_{|\alpha|}\}$. If $\lbf(A\dgraph^1)=\infty$, then $s^{-1}(r(\lambda))$ is clearly infinite, whence $\lambda\in\partial\dgraph$. If $\lbf(A\dgraph^1)<\infty$, then there is $B\in\acf$ such that $\emptyset\neq B\subseteq A\cap\dgraph^0_{sink}$. Since $A$ is unitary, that means $B=A=\{r(\lambda)\}$ and $r(\lambda)\in\dgraph^0_{sink}$, which shows that $\lambda\in\partial\dgraph$. By construction, we have $\Phi(\lambda)=\xia$, establishing that $\Phi$ is a bijection.
	
	To see that $\Phi$ is a homeomorphism, observe that under the conditions stated above, a net $\{\xi_{(j)}\}_{j\in J}$ converges to $\xi$ in $\ftight$ if and only if for all $(\beta,\{v\},\beta)\in E(S)$, there exists $j_0\in J$ such that for all $j\geq j_0$,
	\[(\beta,\{v\},\beta)\in\xi_{(j)} \ \ \Longleftrightarrow \ \  (\beta,\{v\},\beta)\in\xi.\]
		
	Also observe that a net $\{\lambda_{(j)}\}_{j\in J}$ in $\partial\dgraph$ converges to $\lambda\in\partial\dgraph$ if and only if for all $\mu\in\dgraph^*$ there exists $j_0\in J$ such that for all $j\geq j_0$, $\lambda_i$ begins with $\mu$ if and only if $\lambda$ begins with $\mu$ (see example \ref{example:tight.spectrum.dgraph} above).
	
	In the same way as was done above, the map between $\dgraph^*$ and the set $\{(\beta,A,\beta)\in E(S) \ | \ A=\{v\} \ \mbox{is singleton}\}$ given by $\lambda\mapsto (\lbf(\lambda),\{r(\lambda)\},\lbf(\lambda))$ is a bijection. From these observations, it is not hard to see that $\Phi$ is a homeomorphism.
\end{proof}

\begin{example} This example shows that, under the hypotheses of the previous result, if we consider an accommodating family other than $\powerset{\dgraph^0}$ then the boundary path space of the graph may no longer be identified with the tight spectrum as was done in the proof above. For instance, consider the labelled graph below:
	\begin{center}
		\resizebox{5cm}{!}{
			\begin{tikzpicture}[->,>=stealth',shorten >=1pt,auto,thick,main node/.style={circle,draw,thick,font=\bfseries}]
			\node[main node] (1) {$1$};
			\node[main node] (2) [left=2.5cm of 1] {$2$};
			\node[main node] (3) [above right=1cm and 1.5cm of 1] {$3$};
			\node[main node] (4) [below right=1cm and 1.5cm of 1] {$4$};
			
			\draw[->] (2) to [bend right] node [below] {a} (1);
			\draw[->] (1) to [bend right] node [above] {a} (2);
			\draw[->] (1) to node [above left] {a} (3);
			\draw[->] (1) to node [below left] {a} (4);
			\end{tikzpicture}
		}
	\end{center}
	
	It is readily verified that \[\acf=\{\emptyset,\{1\},\{3\},\{1,3\},\{2,4\},\{1,2,4\},\{2,3,4\},\{1,2,3,4\}\}\] is an accommodating family for the given labelled graph, that is closed under relative complements, and that the resulting labelled space is weakly left resolving (indeed, the labelled graph is left resolving, even).
	
	We can see that for each $n\geq0$, the boundary path space of the underlying graph has two elements of length $n$. Let us show that, for the only labelled path of length $n$, there is a single tight filter associated with it.
	
	Note that the only ultrafilters in $\acf$ are $\ftg{G}_1=\usetr{\{1\}}{\acf}$, $\ftg{G}_2=\usetr{\{2,4\}}{\acf}$ and $\ftg{G}_3=\usetr{\{3\}}{\acf}$. Also, $r(\alpha)=\dgraph^0$ and therefore $\acfra=\acf$ for all $\alpha\in\awstar$. Finally, for $n\geq 0$ observe that \[f_{a^n[a]}(\ftg{G}_1)=\ftg{G}_2,\ f_{a^n[a]}(\ftg{G}_2)=\ftg{G}_1, f_{a^n[a]}(\ftg{G}_3)=\ftg{G}_1.\]
	
	From this and using Theorem \ref{thm:tight.filters.in.es}, we see that the tight filters for this labelled space are classified as follows:
	
	There are two ultrafilters of infinite type. The first is associated with the complete family $\{\ft_n\}_n$ where $\ft_n$ equals $\ftg{G}_1$ if $n$ is odd and $\ftg{G}_2$ if n is even. The second ultrafilter is the opposite: $\ft_n$ equals $\ftg{G}_1$ if $n$ is even and $\ftg{G}_2$ if n is odd.
	
	There is a single tight filter for each $\alpha\in\awstar$. This happens for two reasons: first, condition $(ii)(a)$ of Theorem \ref{thm:tight.filters.in.es} does not occur here since the alphabet $\alf$ consists of a single letter; second, $\ftg{G}_3$ is the only ultrafilter in $\acf$ that satisfies condition $(ii)(b)$. 
	
	Thus, a filter $\xia$ in $E(S)$ is tight if and only if $\xi_{|\alpha|}=\ftg{G}_3$. Additionally, if $|\alpha|=n+1$ (that is, $\alpha=a^{n+1}$) with $n\geq 0$ then for each $k\leq n$ we have that
	\[\xi_k=\left\{\begin{array}{ll}
	\ftg{G}_1, & \text{if}\ n\ \text{and}\ k\ \text{have different parities},\\
	\ftg{G}_2, & \text{if}\ n\ \text{and}\ k\ \text{have the same parity}.
	\end{array}\right. \]
	
	The forced relations between the ultrafilters that produce the complete families can be seen with the following directed graph, (where the arrows indicate maps of type $f_{a^r[a]}$, $r\geq 0$):
	\begin{center}
		\resizebox{6cm}{!}{
			\begin{tikzpicture}[->,>=stealth',shorten >=1pt,auto,thick,main node/.style={circle,draw,thick,font=\bfseries}]
			\node[main node] (1) {$\ftg{G}_1$};
			\node[main node] (2) [left=2.5cm of 1] {$\ftg{G}_2$};
			\node[main node] (3) [right=2.5cm of 1] {$\ftg{G}_3$};
			
			\draw[->] (2) to [bend right] node [below] {} (1);
			\draw[->] (1) to [bend right] node [above] {} (2);
			\draw[->] (3) to node [above] {} (1);
			\end{tikzpicture}
		}
	\end{center}
	
\end{example}

\begin{example}
	In this example, we show that the tight spectrum of a labelled space does not depend only on its labelled paths. Consider the two labelled graphs $(\dgraphg{E}_1,\lbfg{L}_1)$ and $(\dgraphg{E}_2,\lbfg{L}_2)$ given respectively by
	
	\resizebox{12cm}{!}{
		\begin{tikzpicture}[->,>=stealth',shorten >=1pt,auto,thick,main node/.style={circle,draw,thick,font=\bfseries}]
		\node[main node] (1) {$v_1$};
		\node[main node] (2) [right=2.5cm of 1] {$v_2$};
		\node[main node] (3) [below=1cm of 1] {$v_3$};
		
		\node (6) [left = 1.7cm of 1] {\text{and}};
		\node[main node] (4) [left=6.5cm of 1] {$v_1$};
		\node[main node] (5) [right=2.5cm of 4] {$v_2$}; 
		
		\Loop[dist=2cm,dir=WE,label=$1$,labelstyle=left](1)  
		\Loop[dist=2cm,dir=SO,label=$0$,labelstyle=below](3)
		
		\draw[->] (1) to node [left] {1} (3);
		\draw[->] (2) to [bend right] node [above] {0} (1);
		\draw[->] (1) to [bend right] node [below] {0} (2);
		
		\Loop[dist=2cm,dir=WE,label=$1$,labelstyle=left](4) 
		
		\draw[->] (5) to [bend right] node [above] {0} (4);
		\draw[->] (4) to [bend right] node [below] {0} (5);		
		\end{tikzpicture}
	}
	
	It is easy to see that both labelled graphs produce the same labelled paths. As accommodating families for each, choose $\acf_1=\powerset{\dgraph_1^0}$ and $\acf_2=\powerset{\dgraph_2^0}$, respectively. Proposition \ref{prop:tight.spectrum.is.boundary.path.space} says the tight spectra of $(\dgraphg{E}_1,\lbfg{L}_1,\acf_1)$ and $(\dgraphg{E}_2,\lbfg{L}_2,\acf_2)$ are homeomorphic to $\partial\dgraph_1$ and $\partial\dgraph_2$, respectively.
	
	However, $\partial\dgraph_1$ has no isolated points, as can be readily verified, whereas the infinite path determined by the loop at vertex $v_3$ is an isolated point for $\partial\dgraph_2$; therefore, they are not homeomorphic.	

\end{example}

\bibliographystyle{abbrv}
\bibliography{labelledgraphs_ref}

\end{document}